\documentclass[a4paper,11pt]{article}

\usepackage{color,url}
\usepackage{enumitem}
\usepackage{graphicx}

\usepackage{psfrag}
\usepackage{epic}
\usepackage{amssymb}
\usepackage{epsfig}
\usepackage{pstricks}
\usepackage{stmaryrd,url}
\usepackage{tikz,amsmath}
\usepackage{diagbox}
\usepackage{framed,multirow}
\usepackage{epstopdf}

\usepackage{amsmath}  
\usepackage{amssymb}   
\usepackage{amsthm} 
\usepackage{comment}
\usepackage{bm}

\usepackage{colortbl}

\newcommand{\stkout}[1]{\ifmmode\text{\sout{\ensuremath{#1}}}\else\sout{#1}\fi}

\numberwithin{equation}{section}

\usepackage[top=1.0in,bottom=1.0in,left=1.0in,right=1.0in]{geometry}
\usepackage[font=scriptsize,labelfont=bf,width=0.85\textwidth]{caption}
\usepackage{subcaption}
\usepackage[colorlinks=true,linkcolor=blue]{hyperref}
\usepackage{makecell}
\usepackage{framed,multirow}
\usepackage{diagbox}

\usepackage{algorithmic,algorithm,xpatch}
\xpatchcmd{\algorithmic}{\setcounter}{\algorithmicfont\setcounter}{}{}
\providecommand{\algorithmicfont}{}

\newlength{\bibitemsep}
\newlength{\bibparskip}
\let\oldthebibliography\thebibliography
\renewcommand\thebibliography[1]{%
  \oldthebibliography{#1}%
  \setlength{\parskip}{\bibitemsep}%
  \setlength{\itemsep}{\bibparskip}%
}

\newtheorem{truth}{Theorem}

\newtheorem{remark}{Remark}
\newtheorem{lemma}{Lemma}

\graphicspath{ {./images/} }

\title{A Fully Discrete Energy-Based Discontinuous Galerkin Method for Variable-Order Time-Fractional Wave Equations
}

\begin{document}
\author{
Lu Zhang 
\thanks{Department of Computational Applied Mathematics and Operations Research, Rice University, Houston, TX 77005, USA. Email: lz82@rice.edu}
 \thanks{Ken Kennedy Institute, Rice University, Houston, TX 77005, USA}}
\maketitle

\begin{abstract}

{Variable-order time-fractional wave equations provide a flexible model for wave phenomena with evolving memory effects and anomalous temporal dynamics. Their numerical approximation is challenging because the variable-order fractional derivative generates time-dependent history weights and therefore lacks the standard time-translation-invariant convolution structure of constant-order fractional operators. In this paper, we develop and analyze a fully discrete energy-based discontinuous Galerkin (DG) method for wave equations with a Caputo-type variable-order time-fractional derivative. The equation is reformulated as a reduced first-order-in-time system, discretized in space by an energy-based DG method, and advanced in time using a second-order approximation of the variable-order Caputo derivative at a specially chosen point in each time interval. The main analytical novelty is a cumulative weight-variation estimate for the variable-order memory weights, which requires only that the variable order $\alpha:[0,T] \rightarrow (0,1)$ be Lipschitz continuous. Based on this estimate, we establish energy stability of the fully discrete scheme and derive second-order temporal convergence together with energy-norm spatial error estimates. The analysis gives suboptimal convergence on general affine simplicial or tensor-product meshes and optimal convergence under additional Cartesian and flux assumptions. Numerical experiments in one and two dimensions validate the theoretical findings.}


\end{abstract}

\textbf{Keywords}: variable-order, time-fractional wave equations, Caputo fractional derivative, energy-based discontinuous Galerkin, stability, error estimates

\textbf{AMS subject }: 35R11, 65M12, 65M15,	65M60

\section{Introduction}

Fractional partial differential equations (PDEs) have been widely used to model processes with memory and hereditary effects, including viscoelasticity, anomalous diffusion, and materials science (see, e.g., \cite{bagley1983theoretical,metzler2000random,pipkin2012lectures,zaslavsky2002chaos}). In many applications, the strength of memory may evolve in time or vary across the medium (see, e.g., \cite{coimbra2003mechanics,zhuang2009numerical}), which motivates variable-order fractional models. Compared with constant-order operators, variable-order fractional derivatives generate time-dependent history kernels and generally lack a standard time-translation-invariant convolution structure. This creates substantial challenges for numerical approximation and analysis, especially for wave-type problems where stability is closely tied to the underlying energy structure.

\subsection{Problem formulation}

This paper focuses on the numerical approximation of variable-order time-fractional wave models, where the fractional order is allowed to vary in time. Specifically, we study a class of wave equations in which \(u=u({\bf x},t)\) depends on location \({\bf x}\) and time \(t\), and the governing equation contains both a standard second-order time derivative and a variable-order Caputo time-fractional derivative:
\begin{equation}\label{eq:problem}
\begin{aligned}
& \frac{\partial^2 u({\bf x}, t)}{\partial t^2}+{}_0^C \mathcal{D}_t^{\beta(t)} u({\bf x}, t) - \Delta u({\bf x}, t)=f({\bf x}, t), \quad({\bf x}, t) \in \Omega \times(0, T],
\end{aligned}
\end{equation}
equipped with the initial conditions
\begin{equation*}
u({\bf x},0)=u_0({\bf x}), 
\qquad 
\frac{\partial u}{\partial t}({\bf x},0)=v_0({\bf x}),
\qquad {\bf x}\in \Omega.
\end{equation*}
Here, \(\Omega\subset\mathbb R^d\), \(d\ge1\), is a bounded domain with a piecewise smooth boundary $\partial\Omega$, \(\Delta\) is the Laplace operator, and \(f({\bf x},t)\) is a given source term. For ease of presentation and analysis, we impose periodic boundary conditions on the computational domain. The operator \({}_0^C\mathcal{D}_t^{\beta(t)}\), with \(\beta(t)\in(1,2)\), denotes the Caputo-type fractional derivative \cite{podlubny1998fractional}, defined for a sufficiently smooth function \(w\) by
\begin{equation*}\label{eq:caputo_2ndorder}
{ }_{0}^{C}\mathcal{D}_t^{\beta(t)} w(t) :=\frac{1}{\Gamma\big(2-\beta(t)\big)} \int_{0}^{t} \frac{w''(s)}{(t-s)^{\beta(t)-1}} ds,
\end{equation*}
where $\Gamma(\cdot)$ is the gamma function and $w''(s)$ is the second-order derivative of $w(s)$ with respect to $s$. When the fractional derivative term is omitted, \eqref{eq:problem} reduces to the classical acoustic wave equation. Fractional time derivatives are often used to model viscoelastic damping and hereditary memory effects, where the material response depends on its deformation history
\cite{nigmatullin1986realization}. In evolving media, the strength of this memory effect may vary with time, leading naturally to variable-order fractional wave models.

A common strategy for treating \eqref{eq:problem} is to reduce it to a first-order-in-time system. Let
\[\alpha(t) := \beta(t) - 1 \in (0,1), \quad v({\bf x}, t) := \frac{\partial u({\bf x}, t)}{\partial t}.\]
Then,
\[{}_0^C \mathcal{D}_t^{\beta(t)} u({\bf x}, t) = {}_0^C \mathcal{D}_t^{\alpha(t)} v({\bf x}, t),\]
where ${}_0^C \mathcal{D}_t^{\alpha(t)} v({\bf x},t)$ is the Caputo-type fractional derivative of order $\alpha(t) \in (0,1)$ defined as
\begin{equation*}\label{eq:caputo_def}
{}_0^C \mathcal{D}_t^{\alpha(t)}w(t) := \frac{1}{\Gamma\big(1-\alpha(t)\big)} \int_0^t \frac{w'(s)}{(t-s)^{\alpha(t)}}\ ds.
\end{equation*}
Thus, \eqref{eq:problem} can be reformulated as the first-order-in-time system
\begin{equation}\label{eq:2nd_system}
\left\{
\begin{aligned}
&\frac{\partial u({\bf x}, t)}{\partial t} = v({\bf x}, t) \\
&\frac{\partial v({\bf x}, t)}{\partial t} = -{}_0^C\mathcal{D}_t^{\alpha(t)} v({\bf x}, t)+\Delta  u({\bf x}, t) + f({\bf x}, t).
\end{aligned}
\right.
\end{equation}
This reformulation is useful both analytically and computationally because the fractional memory term acts on the velocity variable.

\subsubsection{Numerical methods for variable-order fractional PDEs}
 
Over the past few decades, many numerical methods have been developed to solve variable-order fractional PDEs, including finite difference methods \cite{lin2009stability,zhuang2009numerical}, finite element methods \cite{lei2023finite,zheng2021optimal}, spectral methods \cite{zeng2015generalized,zhao2019multi}, and meshless methods \cite{tayebi2017meshless,xu2023novel}, see also \cite{fu2019robust,garrappa2023computational,haq2019numerical,heydari2019wavelet,heydari2019computational,shekari2019meshfree,zeng2017generalized} for related developments. These works demonstrate the broad applicability of numerical methods for variable-order fractional models. However, compared with variable-order diffusion-type equations, the numerical analysis of variable-order fractional wave equations is less developed. 

For variable-order fractional wave models, Bhrawy et al. \cite{bhrawy2016space} proposed a collocation method based on shifted Jacobi--Gauss--Lobatto points in space and shifted
Jacobi--Gauss--Radau points in time for one-dimensional space-time variable-order fractional wave equations. For the reformulated system \eqref{eq:2nd_system}, Zheng and Wang \cite{zheng2022analysis} proposed a numerical method based on the L1 discretization in time and finite element approximation in space, and proved first-order convergence in time and second-order convergence in space. Du et al. \cite{du2022temporal}, motivated by the temporal approximation in \cite{du2020temporal}, developed a second-order finite difference method in time by choosing a special point in each time interval to approximate the Caputo-type variable-order derivative. They further combined this temporal approximation with second-order alternating direction implicit (ADI) and fourth-order compact ADI finite difference schemes in space.

{
These works provide important benchmarks for variable-order fractional wave models. Most existing approaches, however, focus primarily on the temporal approximation of the variable-order fractional derivative, often coupled with classical finite difference, finite element, or spectral spatial discretizations. From the perspective of wave propagation, it is important to develop spatial discretizations compatible with the intrinsic energy structure of the second-order wave operator and to understand how this energy mechanism interacts with the time-dependent memory effects induced by the variable-order Caputo derivative. This viewpoint naturally motivates the use of energy-based discontinuous Galerkin (DG) methods.
}

\subsubsection{Energy-based DG methods}

{For wave equations, a central issue is to design spatial discretizations that are compatible with the underlying energy structure. DG methods are particularly attractive in this context because their elementwise formulation and numerical fluxes provide a natural way to couple local approximations while controlling the discrete energy. In this paper, the DG framework is used primarily as an energy-compatible discretization for wave-dominated problems, rather than merely for geometric flexibility.}

The energy-based DG method introduced in \cite{appelo2015new} was developed for wave equations written directly in second-order form. Compared with local DG methods \cite{chou2014optimal}, it introduces only one auxiliary variable regardless of the spatial dimension. Compared with interior penalty DG methods \cite{riviere2003discontinuous}, it allows the use of mesh-independent conservative or upwind numerical fluxes. These features make the method well suited for energy-stable approximations of second-order wave equations. Energy-based DG methods have been successfully applied to a variety of wave models, including elastic wave equations \cite{appelo2018energy}, advective wave equations \cite{zhang2019energy}, Maxwell's equations \cite{hagstrom2021discontinuous}, semilinear wave equations, and nonlinear
Schr\"odinger equations with wave operator \cite{appelo2020energy,ren2023energy}. Further developments include improved convergence estimates \cite{du2019convergence}, techniques for reducing numerical stiffness caused by polynomial approximations \cite{appelo2021stagger,zhang2021energy}, and extensions to fourth-order semilinear wave equations \cite{zhang2021local}.

{Existing energy-based DG analyses mainly concern classical wave equations without variable-order fractional memory. For the model considered here, the stability mechanism must account not only for the DG spatial fluxes, but also for the time-dependent history weights generated by the variable-order Caputo derivative. This coupling between the variable-order memory term and the DG energy structure is the main analytical issue addressed in this work.}

\subsection{Main results}
{This paper develops and analyzes a fully discrete energy-based DG method for the variable-order time-fractional wave equation \eqref{eq:problem}. The main difficulty comes from the simultaneous presence of a second-order wave operator and a variable-order history term. In contrast to constant-order fractional models, the variable-order Caputo derivative does not lead to a standard time-translation-invariant convolution structure. Consequently, the associated discrete memory weights vary with both the current time level and the history lag, which makes the stability and convergence analysis substantially more delicate.}

{To construct the method, we apply an energy-based DG discretization in space to the reduced system \eqref{eq:2nd_system}. For the variable-order Caputo derivative in time, we use the temporal approximation introduced in \cite{du2020temporal}, in which a special point in each time interval is chosen to obtain a second-order approximation under suitable regularity assumptions. The resulting scheme is spatially high-order and second-order accurate in time.}

{The main analytical contributions of this work are twofold. First, compared with \cite{du2022temporal}, where the stability and convergence analysis relies on the monotonicity condition \(\alpha'(t)\le0\) to compare the discrete memory weights at consecutive time levels, we remove this restriction. Instead, we establish a cumulative weight-variation estimate for the variable-order weights \(a_i^{(m)}\) defined in \eqref{def:a}. This estimate controls the possible increase of the weights after summation over the history levels, rather than requiring a pointwise comparison at every time step. As a result, the stability and convergence analysis can be carried out under the weaker assumption
\begin{equation}\label{eq:condition_alpha}
\alpha:[0,T]\to(0,1) \text{ is Lipschitz continuous}.
\end{equation}
Second, unlike existing energy-based DG analyses for classical wave equations, the present analysis must also control the interaction between the time-dependent memory term and the DG spatial discretization, including numerical traces, jumps, flux contributions, and projection errors. } 

Under the assumption \eqref{eq:condition_alpha}, we prove energy stability of the proposed fully discrete scheme for suitable mesh-independent numerical fluxes. We then establish second-order convergence in time and derive energy-norm error estimates for the spatial discretization. For general affine simplicial and tensor-product meshes, the scheme yields a suboptimal energy-norm convergence estimate under the condition \(q_u-2\le q_v\le q_u\), where \(q_u\) and \(q_v\) denote the polynomial degrees used to approximate the displacement \(u\) and the velocity \(v\), respectively. In a Cartesian setting, and for specific choices of numerical fluxes, the estimate can be improved to an optimal energy-norm convergence result when \(q_v=q_u-1\); see Remark \ref{remark:optimal}.

The rest of the paper is organized as follows. Section \ref{sec:full_discrete} introduces the fully discrete energy-based DG scheme for \eqref{eq:problem}. Section \ref{sec:stability} proves energy stability of the scheme and derives the corresponding energy-norm error estimates. Numerical experiments in
Section \ref{sec:numerical} confirm the predicted convergence behavior in both 1D and 2D settings. Finally, Section \ref{sec:discussion} concludes the paper.

\section{Fully Discrete Energy-based DG Scheme}\label{sec:full_discrete}

In this section, we introduce the fully discrete energy-based DG method for the reformulated problem \eqref{eq:2nd_system}.

\subsection{Spatial semi-discrete energy-based DG formulation}
{Let $\mathcal T_h$ be a tessellation of the computational domain $\Omega_h$, consisting of non-overlapping elements $K$ such that $\bar{\Omega}_h=\cup_{K\in\mathcal T_h}\bar K$. Throughout the theoretical analysis, we assume that \(\mathcal{T}_h\) is a shape-regular and quasi-uniform family of affine simplicial or tensor-product meshes. These assumptions ensure that the Bramble--Hilbert approximation estimates, as well as the inverse and trace inequalities used in Section \ref{sec:error_full} hold with constants independent of \(h:=\max_{K\in\mathcal T_h} h_K\).} 

{To approximate the solution components $(u,v)$, we use piecewise polynomial spaces of degrees $q_u$ and $q_v$, respectively, where 
$(q_u,q_v)\in \mathcal{Q}_{\text{adm}}$ defined in \eqref{eq:degree_admissible}. On each element $K$, we denote by $\mathcal P^s(K)$ either the space of polynomials of total degree at most $s$ on a simplex, or the tensor-product polynomial space of degree at most $s$ in each coordinate on a tensor-product element. We then define
\begin{equation*}
\begin{aligned}
{U_h^{q_u}} &{:= \{w({\bf x})\big| w({\bf x}) \in \mathcal{P}^{q_u}(K), {\bf x} \in K\quad \forall K \in \mathcal{T}_h \},}\\
{V_h^{q_v}} & {:= \{w({\bf x})\big| w({\bf x}) \in \mathcal{P}^{q_v}(K), {\bf x} \in K\quad \forall K \in \mathcal{T}_h \}.}
\end{aligned}
\end{equation*}
We note that the stability analysis in Section \ref{sec:h1stability} holds for arbitrary \((q_u,q_v)\in \mathbb{N}\times\mathbb{N}_0\); while for the error estimate in Section \ref{sec:error_full}, we impose the additional degree-compatibility condition stated in \eqref{eq:degree_admissible}.}

To derive an energy-based DG scheme for the system (\ref{eq:2nd_system}), for each element $K\in\mathcal{T}_h$ we test the first and second equations in (\ref{eq:2nd_system}) by functions $-\Delta \phi, \psi$ with $\phi\in U_h^{q_u}$ and $\psi\in V_h^{q_v}$, respectively, and add boundary integrals vanishing for continuous problem to get
\begin{equation*}
\int_K -\Delta \phi \Big(\frac{\partial u_h}{\partial t} - v_h\Big) = \int_{\partial K} \nabla \phi \cdot{\bf n} \Big(v_h^\ast - \frac{\partial u_h}{\partial t}\Big)\ dS,
\end{equation*}
and
\begin{equation*}
\int_K \psi \frac{\partial v_h}{\partial t} -\psi \Delta u_h + \psi {}_0^C\mathcal{D}_t^{\alpha(t)} v_h\ d {\bf x}= \int_K  \psi f({\bf x}, t) \ d{\bf x} + \int_{\partial K} \psi \big((\nabla u_h)^\ast\cdot {\bf n} - \nabla u_h \cdot {\bf n}\big)\ dS,
\end{equation*}
where $v_h^\ast$ and $(\nabla u_h)^\ast$ are numerical fluxes to be specified on the element boundaries, and $\bf n$ is the outward unit normal to $\partial K$.  An integration by parts leads to 
\begin{equation}\label{DG_scheme1}
\begin{aligned}
\int_K \nabla \phi \cdot \nabla \Big(\frac{\partial u_h}{\partial t} - v_h\Big) \ d{\bf x} &= \int_{\partial K} \nabla \phi \cdot{\bf n} (v_h^\ast - v_h)\ dS,\\
\int_K \psi \frac{\partial v_h}{\partial t} + \nabla\psi \cdot\nabla u_h +\psi {}_0^C\mathcal{D}_t^{\alpha(t)} v_h \ d {\bf x} &= \int_K \psi f({\bf x}, t) \ d{\bf x} + \int_{\partial K} \psi (\nabla u_h)^\ast\cdot {\bf n} \ dS.
\end{aligned}
\end{equation}
Moreover, to determine the mean value of $\frac{\partial u_h}{\partial t}$, we impose
\begin{equation*}
\int_K \tilde{\phi} \Big(\frac{\partial u_h}{\partial t} - v_h\Big) d{\bf x} = 0\quad \forall \tilde\phi \in U_h^0.
\end{equation*}

Next, to complete the energy-based DG formulation, we need to specify the numerical fluxes $v_h^\ast$ and $(\nabla u_h)^\ast$ at the element boundaries. Specifically, we label two elements sharing an interelement boundary face, denoted as $F$, by $K^+$ and $K^-$. Furthermore, let $\chi/\bm \chi$ be a continuously differentiable scalar/vector value function on $K^+$ and $K^-$, and $\chi^{\pm} := (\chi|_{K^{\pm}})|_{F}$, ${\bm \chi}^{\pm} := ({\bm \chi}|_{K^{\pm}})|_{F}$. We then introduce standard notations for jumps given by 
\[[\chi]:= \chi^-{\bf n}^- + \chi^+{\bf n}^+, \quad [\bm\chi]:= \bm\chi^-\cdot {\bf n}^- +  \bm\chi^+\cdot{\bf n}^+.\]
Here, ${\bf n}^+$ and ${\bf n}^-$ are the unit outward normals to $\partial K^+$ and $\partial K^-$, respectively. We now define the numerical fluxes as follows with mesh-independent constants $\theta \in \mathbb{R}$, $\gamma, \zeta \geq 0$ (see, e.g., \cite{appelo2015new})
\begin{equation}\label{flux1}
v_h^\ast := \theta v_h^+ + (1-\theta) v_h^- - \zeta[\nabla u_h], \quad (\nabla u_h)^\ast := (1-\theta) \nabla u_h^+ + \theta\nabla u_h^- - \gamma [v_h].
\end{equation} 

To obtain a fully discrete energy-based DG scheme for system \eqref{eq:2nd_system}, we approximate both the integer-order and variable-order fractional time derivatives in \eqref{DG_scheme1}. In the following, we present the time discretization scheme employed in this work.

\subsection{The time discretization}

We discretize the time interval $[0,T]$ using grid points $t_m := m\tau$ for $m=0,1,2,\cdots,M$, where $\tau = T/M$ is the uniform time step size. In the following, we introduce some important approximations and lemmas relevant to the temporal discretization, stability, and error estimation of the proposed scheme.
\begin{lemma}\label{lemma:nonlinear}
(Lemma 2.1 of \cite{du2020temporal}) Suppose that $\alpha(t)$ satisfies the condition \eqref{eq:condition_alpha}, and let $m = 0,1,\cdots,M-1$. Define
\begin{equation}\label{eq:F}
F(\sigma) := \sigma - \Big(1 - \frac{1}{2}\alpha(t_m + \sigma\tau)\Big).
\end{equation}
Then, for sufficiently small $\tau > 0$ such that $L_\alpha \tau < 2$, the equation $F(\sigma)=0$ has a unique root
\[\sigma_m := \sigma_m(\tau) \in \Big(\frac{1}{2}, 1\Big).\]
Here, $L_\alpha$ denotes the Lipschiz constant of $\alpha(t)$.
\end{lemma}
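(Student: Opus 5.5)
The plan is to combine the intermediate value theorem for existence with a contraction or monotonicity argument for uniqueness. Both use only the Lipschitz continuity \eqref{eq:condition_alpha} of $\alpha$ and the range condition $\alpha(t)\in(0,1)$.

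\textbf{Existence.} Since $\alpha$ is continuous on $[0,T]$, $F$ is continuous in $\sigma$ on $[1/2,1]$. Here $t_m+\sigma\tau\le t_{m+1}\le T$, so $\alpha$ is evaluated only inside $[0,T]$. I will evaluate $F$ at the two endpoints:
\[
F(\tfrac12)=\tfrac12-1+\tfrac12\alpha(t_m+\tfrac{\tau}{2})=\tfrac12\bigl(\alpha(t_m+\tfrac{\tau}{2})-1\bigr)<0,
\]
because $\alpha<1$, and
\[
F(1)=\tfrac12\alpha(t_{m+1})>0,
\]
because $\alpha>0$. The intermediate value theorem then gives a root in the open interval $(1/2,1)$. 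The endpoints are excluded because $F$ does not vanish there.

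\textbf{Uniqueness.} I will show that $F$ is strictly increasing. For $\sigma_1<\sigma_2$,
\[
F(\sigma_2)-F(\sigma_1)=(\sigma_2-\sigma_1)+\tfrac12\bigl(\alpha(t_m+\sigma_2\tau)-\alpha(t_m+\sigma_1\tau)\bigr)\ge(\sigma_2-\sigma_1)\Bigl(1-\tfrac{L_\alpha\tau}{2}\Bigr)>0,
\]
using $L_\alpha\tau<2$. So $F$ has at most one zero. Equivalently, the map $\sigma\mapsto 1-\tfrac12\alpha(t_m+\sigma\tau)$ is a contraction with constant $L_\alpha\tau/2<1$.

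\textbf{Main obstacle.} The argument has no real difficulty. The only point needing care is the uniqueness step, which must use the Lipschitz bound rather than a monotonicity assumption on $\alpha$. This is exactly where the smallness condition $L_\alpha\tau<2$ enters.
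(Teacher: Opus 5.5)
Your proof is correct and is essentially the standard argument behind the cited Lemma 2.1 of \cite{du2020temporal}, which the paper quotes without proof: the sign change $F(\tfrac12)<0<F(1)$ gives existence via the intermediate value theorem, and strict monotonicity of $F$ gives uniqueness. The only nuance is that the cited source, to my recollection, obtains monotonicity from $F'(\sigma)=1+\tfrac{\tau}{2}\alpha'(t_m+\sigma\tau)$ for differentiable $\alpha$; your difference-quotient bound with factor $1-L_\alpha\tau/2$ is exactly the form needed under the Lipschitz hypothesis \eqref{eq:condition_alpha}, accounts for the condition $L_\alpha\tau<2$, and matches the fixed-point/Lipschitz reasoning the paper reuses in the Appendix.
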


For each $m$, further define
\[t_{m+\sigma_m} := t_m + \sigma_m \tau, \quad m = 0,1,\cdots,M-1.\]
We then have the following lemma for the approximation of variable-order fractional derivatives.
\begin{lemma}\label{lemma:fractional_approximation}
(\cite{du2020temporal}) Assume that $\ell(t) \in W^{3,\infty}(0,T)$ and $\alpha(t)$ satisfies the condition \eqref{eq:condition_alpha}. Define
\begin{equation*}\label{eq:sm}
\alpha_{m+\sigma_m}:=\alpha\left(t_{m+\sigma_m}\right),\ \ell^m:=\ell\left(t_m\right),\ s_m:= \begin{cases}2^{1-\alpha_{1 / 2}} \tau^{\alpha_{1 / 2}} \Gamma\left(2-\alpha_{1 / 2}\right), m=0, \\ \tau^{\alpha_{m+\sigma_m}} \Gamma\left(2-\alpha_{m+\sigma_m}\right), 1 \leq m \leq M-1,\end{cases}
\end{equation*}
where $\sigma_m$ is the root of the nonlinear problem \eqref{eq:F}. Then, when $m = 0$, there exists a positive constant $0<C_1<\infty$ independent of $\tau$ such that
\begin{equation*}\label{eq:Cap_1}
\bigg|{}_0^C \mathcal{D}_t^{\alpha(t)} \ell(t) \big|_{t = t_{1/2}} -  \frac{\ell^1 - \ell^0}{s_0}\bigg| \leq C_1\|\ell''(t)\|_{L^\infty(0,t_1)}\mathcal{O}(\tau^{2-\alpha_{1/2}}).
\end{equation*}
Furthermore, when $1\leq m\leq M$ and $0\leq i\leq m$, define
\begin{equation}\label{def:a}
    a_i^{(m)}:=\frac{c_i^{(m)}}{s_m},
\end{equation}
we have, for any $1\leq m\leq M-1$,
\begin{equation*}\label{eq:Cap_2}
\bigg|{ }_0^C \mathcal{D}_t^{\alpha(t)} \ell(t)\big|_{t=t_{m+\sigma_m}} -  \sum_{i=0}^m a_{m-i}^{(m)}\left(\ell^{i+1}-\ell^i\right)\bigg| \leq C_2 \|\ell'''(t)\|_{L^\infty(0,t_{m+1})}\tau^{3-\alpha_{m+\sigma_m}},
\end{equation*}
where $C_2 \in (0,\infty)$ is a positive constant independent of $\tau$, and 
\begin{equation}
c_i^{(m)} \!\!=\! \begin{cases}
\begin{aligned}
&\frac{\left(1+\sigma_m\right)^{2-\alpha_{m+\sigma_m}}-\sigma_m^{2-\alpha_{m+\sigma_m}}}{2-\alpha_{m+\sigma_m}} \\
&\quad -\frac{1}{2}\left[\left(1+\sigma_m\right)^{1-\alpha_{m+\sigma_m}}-\sigma_m^{1-\alpha_{m+\sigma_m}}\right], 
\end{aligned} & i = 0, \\[3ex]
\begin{aligned}
&\frac{\left(i+\sigma_m+1\right)^{2-\alpha_{m+\sigma_m}}-2\left(i+\sigma_m\right)^{2-\alpha_{m+\sigma_m}}+\left(i+\sigma_m-1\right)^{2-\alpha_{m+\sigma_m}}}{2-\alpha_{m+\sigma_m}} \\
&\  -\frac{\left(i+\sigma_m+1\right)^{1-\alpha_{m+\sigma_m}}-2\left(i+\sigma_m\right)^{1-\alpha_{m+\sigma_m}}+\left(i+\sigma_m-1\right)^{1-\alpha_{m+\sigma_m}}}{2}, 
\end{aligned} & 1\!\leq i \leq \!m\!-\!1, \\[3ex]
\begin{aligned}
&\frac{3\left(i+\sigma_m\right)^{1-\alpha_{m+\sigma_m}}-\left(i+\sigma_m-1\right)^{1-\alpha_{m+\sigma_m}}}{2} \\
&\quad -\frac{\left(i+\sigma_m\right)^{2-\alpha_{m+\sigma_m}}  -\left(i+\sigma_m-1\right)^{2-\alpha_{m+\sigma_m}}}{2\alpha_{m+\sigma_m}},
\end{aligned} & i=m.
\end{cases}\label{eq:cdef}
\end{equation}
\end{lemma}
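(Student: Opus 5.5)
Lemma \ref{lemma:fractional_approximation} is a truncation-error estimate for a quadrature of the variable-order Caputo operator, and it is cited from \cite{du2020temporal}. I would reprove it by freezing the order at the evaluation point. For fixed $m$, set $t^\ast := t_{m+\sigma_m}$ and $\alpha^\ast := \alpha_{m+\sigma_m}$. Then
\[
{}_0^C\mathcal{D}_t^{\alpha(t)}\ell\big|_{t=t^\ast} = \frac{1}{\Gamma(1-\alpha^\ast)}\int_0^{t^\ast}(t^\ast-s)^{-\alpha^\ast}\ell'(s)\,ds .
\]
This is an ordinary constant-order Caputo derivative with order $\alpha^\ast$, evaluated at one point. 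The variable order enters only through $\alpha^\ast$ and $\sigma_m$, and Lemma \ref{lemma:nonlinear} guarantees that $\sigma_m\in(\tfrac12,1)$ is well defined. From here the argument follows the Alikhanov-type $L2$-$1_\sigma$ analysis. I would split the integral over the subintervals $[t_{j},t_{j+1}]$ for $j<m$ and the last piece $[t_m,t^\ast]$.

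\textbf{Step 1: the local interpolants.} On each $[t_j,t_{j+1}]$ with $j\le m-1$, I replace $\ell$ by its quadratic interpolant on $t_{j-1},t_j,t_{j+1}$; on the first interval I use linear interpolation. On $[t_m,t^\ast]$ I use the linear interpolant through $t_m,t_{m+1}$, whose derivative is $(\ell^{m+1}-\ell^m)/\tau$.

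\textbf{Step 2: the weights.} I integrate the kernel $(t^\ast-s)^{-\alpha^\ast}$ exactly against the interpolant derivatives. Changing variables $s=t_m+\sigma_m\tau-\xi\tau$ makes powers $(i+\sigma_m)^{1-\alpha^\ast}$ and $(i+\sigma_m)^{2-\alpha^\ast}$ appear. Collecting the coefficients of $\ell^{i+1}-\ell^i$ reproduces the $c_i^{(m)}$ of \eqref{eq:cdef}, and the factor $\tau^{\alpha^\ast}\Gamma(2-\alpha^\ast)$ gives $s_m$. For $m=0$, the point $t_{1/2}$ together with pure linear interpolation gives $s_0$ with the $2^{1-\alpha_{1/2}}$ factor. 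This step is bookkeeping: a discrete summation by parts over the second differences.

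\textbf{Step 3: interior error.} The error splits into an interior part and a last-interval part. On each $[t_j,t_{j+1}]$ the quadratic interpolation error in $\ell'$ is $O(\tau^2\|\ell'''\|_\infty)$. I would not bound this pointwise. Instead, I write the interpolation remainder in Peano form and integrate by parts against the kernel, so that each interval contributes $O(\tau^3)$ times a kernel derivative $(t^\ast-s)^{-\alpha^\ast-1}$. Summing gives $O(\tau^{3-\alpha^\ast})$, uniformly in $m$. The first interval $[0,t_1]$ uses linear interpolation, so it needs separate treatment using its distance from $t^\ast$ when $m\ge1$.

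\textbf{Step 4: the last interval, which is the main obstacle.} On $[t_m,t^\ast]$, linear interpolation is only first order in $\ell'$. Expanding $\ell'(s)=\ell'(t^\ast)+\ell''(t^\ast)(s-t^\ast)+O(\tau^2)$, the leading $O(\tau)$ error is a constant multiple of
\[
\ell''(t^\ast)\int_{t_m}^{t^\ast}(t^\ast-s)^{-\alpha^\ast}\Big(s-t_m-\tfrac{\tau}{2}\Big)\,ds ,
\]
with $(\tau/2)$ arising from the secant slope. This integral vanishes exactly when
\[
\frac{\sigma_m}{2-\alpha^\ast} \cdot (\ldots) = \frac{1}{2}\cdots ,
\]
that is, precisely when $\sigma_m = 1-\alpha^\ast/2$, which is the defining equation $F(\sigma_m)=0$. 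So the choice of $\sigma_m$ cancels the dangerous term, leaving $O(\tau^{3-\alpha^\ast})$. I expect this to be the key and most delicate step, together with the uniformity of constants in Step 3. For those constants I would use that $\alpha$ takes values in a compact subset of $(0,1)$, so $\Gamma(1-\alpha)$ and the powers are bounded uniformly. For $m=0$ the same computation without the quadratic correction yields only $O(\tau^{2-\alpha_{1/2}})\|\ell''\|$, as stated.
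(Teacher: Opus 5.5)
The paper gives no proof of this lemma; it cites \cite{du2020temporal}, whose argument is the frozen-order L2-$1_\sigma$ analysis of \cite{alikhanov2015new}. Your overall strategy is that same argument, and your Steps 3--4 are sound. Step 4 in particular checks out:
\[
\int_{t_m}^{t^\ast}(t^\ast-s)^{-\alpha^\ast}(s-t_{m+1/2})\,ds=\tau^{2-\alpha^\ast}\sigma_m^{1-\alpha^\ast}\Big[\tfrac{\sigma_m-1/2}{1-\alpha^\ast}-\tfrac{\sigma_m}{2-\alpha^\ast}\Big],
\]
and this vanishes iff $\sigma_m=1-\tfrac12\alpha^\ast$. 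The genuine gap is in Step 1. Your stencil (backward quadratic, linear on $[t_0,t_1]$) neither generates the weights \eqref{eq:cdef} nor attains the claimed accuracy.

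\textbf{The weights do not match.} Write $K(s)=(t^\ast-s)^{-\alpha^\ast}/\Gamma(1-\alpha^\ast)$ and $\delta_j=(\ell^{j+1}-\ell^j)/\tau$. For $l=m-j$ one has
\[
\int_{t_j}^{t_{j+1}}K\,ds=\frac{\tau^{1-\alpha^\ast}}{\Gamma(2-\alpha^\ast)}\,a_l,\qquad \int_{t_j}^{t_{j+1}}K(s)(s-t_{j+1/2})\,ds=\frac{\tau^{2-\alpha^\ast}}{\Gamma(2-\alpha^\ast)}\,b_l .
\]
Here $a_l=(l+\sigma_m)^{1-\alpha^\ast}-(l-1+\sigma_m)^{1-\alpha^\ast}$, with $a_0=\sigma_m^{1-\alpha^\ast}$ from $[t_m,t^\ast]$. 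Also
\[
b_l=\frac{(l+\sigma_m)^{2-\alpha^\ast}-(l-1+\sigma_m)^{2-\alpha^\ast}}{2-\alpha^\ast}-\frac12\big[(l+\sigma_m)^{1-\alpha^\ast}+(l-1+\sigma_m)^{1-\alpha^\ast}\big]>0 .
\]
The weights \eqref{eq:cdef} are exactly $c_0=a_0+b_1$, $c_l=a_l+b_{l+1}-b_l$ for $1\le l\le m-1$, and $c_m=a_m-b_m$. This forces the interpolant's derivative on $[t_j,t_{j+1}]$ to be $\delta_j+\tau^{-1}(\delta_{j+1}-\delta_j)(s-t_{j+1/2})$. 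That is the \emph{forward} quadratic through $t_j,t_{j+1},t_{j+2}$, used on \emph{every} interval $0\le j\le m-1$. So $[t_0,t_1]$ uses $t_0,t_1,t_2$, and $[t_{m-1},t_m]$ uses $\ell^{m+1}$. Your stencil instead gives $(c_0,c_1)=(a_0,a_1)$ for $m=1$ and $(a_0,\,a_1+b_1,\,a_2-b_1)$ for $m=2$. So the claim in Step 2 is false.

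\textbf{The accuracy fails.} The linear piece on $[t_0,t_1]$ contributes
\[
\ell''(t_{1/2})\,\tau^{2-\alpha^\ast}b_m/\Gamma(2-\alpha^\ast)+O(\tau^{3-\alpha^\ast}),
\]
which is not $O(\tau^{3-\alpha^\ast})$ uniformly in $m$. Take $\ell(t)=t^2$ and $m=1$. The right-hand side of the claimed bound is then zero, and the weights \eqref{eq:cdef} are indeed exact, precisely because $\sigma_1=1-\tfrac12\alpha^\ast$. Your formula, however, is off by $2b_1\tau^{2-\alpha^\ast}/\Gamma(2-\alpha^\ast)\neq 0$. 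The ``distance from $t^\ast$'' argument cannot rescue this, because for $m=1$ that distance is only $\sigma_1\tau$.

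\textbf{The fix.} Adopt the forward stencil throughout. Your integration-by-parts argument in Step 3 then goes through unchanged, since the remainder still vanishes at both endpoints of each interval. Summing gives
\[
C\tau^3\|\ell'''\|_{L^\infty(0,t_{m+1})}\big[(\sigma_m\tau)^{-\alpha^\ast}-(t^\ast)^{-\alpha^\ast}\big]\le C\tau^{3-\alpha^\ast},
\]
and the norm over $(0,t_{m+1})$ appears naturally. Doing this bookkeeping carefully also shows a typo in the paper: the denominator $2\alpha_{m+\sigma_m}$ in the $i=m$ case of \eqref{eq:cdef} should read $2-\alpha_{m+\sigma_m}$.
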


Moreover, we have the following lemma for the boundness of the coefficients in the above approximation of Caputo-type variable-order fractional derivatives.
\begin{lemma}\label{lemma:ineq}
(Lemma 4 of \cite{alikhanov2015new} and Lemma 4.3 of \cite{du2022temporal}) Suppose that $\alpha(t)$ satisfies the condition \eqref{eq:condition_alpha}. Then, the coefficients $\{c_i^{(m)}\}_{i=0}^m$ given in \eqref{eq:cdef} satisfy the following relation:
\begin{equation*}\label{eq:sequence_c}
 0 <\frac{1-\alpha_{m+\sigma_m}}{2\left(m+\sigma_m\right)^{\alpha_{m+\sigma_m}}} < c_m^{(m)} < \cdots < c_1^{(m)} < c_0^{(m)} \qquad \forall 1\leq m\leq M-1. 
\end{equation*}
Furthermore, there exist positive constants $0 < C_3,C_4 < \infty$ independent of $\tau$ such that
\begin{equation*}\label{eq:c1c2}
\tau \sum_{i=1}^m a_{i-1}^{(i)} - a_{i}^{(i)} \leq C_3<\infty \quad \text{and} \quad \tau \sum_{i=1}^m a_i^{(i)}  \leq C_4<\infty \qquad \forall 1\leq m\leq M-1,
\end{equation*}
where $a_{i-1}^{(i)}$ and $a_i^{(i)}$ are defined in \eqref{def:a}.
\end{lemma}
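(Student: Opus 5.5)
We prove the two parts of Lemma \ref{lemma:ineq} separately. The monotonicity chain is a statement about a single fixed level $m$, so the order $\alpha:=\alpha_{m+\sigma_m}$ and the shift $\sigma:=\sigma_m$ are frozen. The two summed bounds then follow from explicit size estimates for $c_{i}^{(i)}$ and $c_{i-1}^{(i)}-c_i^{(i)}$, together with the scaling $s_i\sim\tau^{\alpha_{i+\sigma_i}}$.

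\textbf{Monotonicity and the lower bound.} Fix $m$ and set $g(t):=t^{1-\alpha}$ and $G(t):=t^{2-\alpha}/(2-\alpha)$, so that $G'=g$. For $1\le i\le m-1$,
\[
c_i^{(m)}=\int_{-1}^{1}(1-|s|)\,g(i+\sigma+s)\,ds-\tfrac12\Delta^2 g(i+\sigma).
\]
Writing $\Delta^2 g$ as an integral of $g''$ against the same hat function gives
\[
c_i^{(m)}=\int_{-1}^1(1-|s|)\Big[g(i+\sigma+s)-\tfrac12 g''(\cdot)\Big]ds .
\]
In this form positivity and monotone decrease in $i$ reduce to sign properties: $g'>0$, $g''<0$, $g'''>0$. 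The cleaner route is the one in Alikhanov's argument. Interpret $c_i^{(m)}$ as $\int$ of the kernel $(t_{m+\sigma}-s)^{-\alpha}$ against the piecewise-quadratic interpolation basis, as in the construction of Lemma \ref{lemma:fractional_approximation}. Then show that $c_i-c_{i+1}$ equals a positive combination of integrals of $-\partial_s$ of that kernel, using the convexity of $s\mapsto(\cdot-s)^{-\alpha}$.

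The endpoint cases need separate treatment. For $i=0$, the comparison $c_0>c_1$ becomes an explicit inequality in $\sigma\in(1/2,1)$. It uses the defining relation $\sigma=1-\alpha/2$ from Lemma \ref{lemma:nonlinear}, and this is exactly where the special point matters. For $i=m$, I would apply the mean value theorem to $c_m^{(m)}$ to get
\[
c_m^{(m)}\ge\tfrac{1-\alpha}{2}(m+\sigma)^{-\alpha},
\]
again by concavity of $g$ on $[m+\sigma-1,m+\sigma]$. The comparison $c_{m-1}>c_m$ is then handled the same way. Since this part is quoted from \cite{alikhanov2015new} and \cite{du2022temporal}, I would present only the structure and defer to them for the elementary calculus.

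\textbf{Summed bounds.} For the second sum, first bound $c_i^{(i)}\le C(i+\sigma_i)^{-\alpha_{i+\sigma_i}}$ from the explicit formula; this is a Taylor expansion of the differences of $G$ and $g$. Then
\[
\tau a_i^{(i)}\le C\,\tau^{1-\alpha_{i+\sigma_i}}(i+\sigma_i)^{-\alpha_{i+\sigma_i}}/\Gamma(2-\alpha)\le C\,t_{i+\sigma_i}^{-\alpha_{i+\sigma_i}}\,\tau .
\]
Here $\Gamma(2-\alpha)$ is bounded below on $(0,1)$, with $\alpha\le\alpha^*:=\max\alpha<1$. Summing gives a Riemann sum for $\int_0^T t^{-\alpha^*}dt$, after bounding $t^{-\alpha_{\cdot}}\le \max(1,T)\,t^{-\alpha^*}$. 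The result is $C_4$.

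For the first sum, compute $c_{i-1}^{(i)}-c_i^{(i)}$ at a fixed level. The $i$th term is the difference between an interior coefficient and the boundary coefficient, which is $O((i+\sigma)^{-\alpha})$ by the same expansions. Consequently $\tau(a_{i-1}^{(i)}-a_i^{(i)})\le C t_{i}^{-\alpha^*}\tau$, and it sums to a bounded constant $C_3$ exactly as before. The case $i=1$, involving $c_0^{(1)}-c_1^{(1)}$, is a single $O(\tau^{1-\alpha})$ term.

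\textbf{Main obstacle.} The delicate step is the uniform-in-$\alpha$ size bounds for the endpoint coefficient $c_m^{(m)}$. Its formula has a $1/(2\alpha)$ factor, so near $\alpha\to0$ cancellation must be verified. I would handle this by expanding $(i+\sigma)^{2-\alpha}-(i+\sigma-1)^{2-\alpha}$ to second order and checking that the $1/\alpha$ singular parts cancel against $\tfrac32(\cdot)^{1-\alpha}$ terms. If that cancellation fails, I would fall back on $\alpha\ge\alpha_*:=\min\alpha>0$, which holds since $\alpha$ is continuous on a compact interval.
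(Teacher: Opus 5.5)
Your strategy is the right one and matches the cited works; the paper gives no proof of its own, only the references. At fixed $m$ one has exactly $\sigma_m=1-\tfrac12\alpha_{m+\sigma_m}$, so the $c_i^{(m)}$ are Alikhanov's constant-order coefficients. You are also right that $c_0>c_1$ genuinely needs this special point: off it, e.g.\ $\alpha=0.1$, $\sigma=0.75$, $m\ge2$, one finds $c_0\approx0.78<c_1\approx0.88$. The Riemann-sum argument for $C_3,C_4$ is fine once $c\lesssim(i+\sigma)^{-\alpha}$ is known. However, two steps are wrong as written.

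\textbf{The interior representation.} With $G'=g$ one has $\frac{\Delta^2 x^{2-\alpha}}{2-\alpha}=\Delta^2G(x)=\int_{-1}^1(1-|s|)G''(x+s)\,ds$, and $G''=g'$, not $g$. With $g$ in the integrand, $c_i^{(m)}$ would grow like $(i+\sigma)^{1-\alpha}$. That contradicts both the monotone chain and your own size bound. The correct form is $c_i^{(m)}=\int_{-1}^1(1-|s|)\,h(i+\sigma+s)\,ds$ with $h=g'-\tfrac12g''=(1-\alpha)t^{-\alpha}+\tfrac{\alpha(1-\alpha)}{2}t^{-1-\alpha}$; compare \eqref{eq:auxx1}. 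Your sign conditions then do exactly what is needed: $g'>0$ and $g''<0$ give $h>0$, and $g''<0$, $g'''>0$ give $h'<0$. This yields positivity and strict decrease for $1\le i\le m-1$.

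\textbf{The endpoint coefficient.} Your ``main obstacle'' comes from a misprint. In \eqref{eq:cdef}, the denominator $2\alpha_{m+\sigma_m}$ in the case $i=m$ should read $2-\alpha_{m+\sigma_m}$. The endpoint coefficient is $c_m^{(m)}=\tfrac32g(x)-\tfrac12g(x-1)-\int_{x-1}^xg(t)\,dt$ with $x=m+\sigma$. Read literally, there is no cancellation to find and the lemma is false. Take $\alpha=\tfrac12$ and $\sigma=\tfrac34$, which is the special point, with $m=1$: the printed formula gives $c_1^{(1)}\approx1.984-0.433-1.666<0$. So neither your proposed expansion nor the fallback $\alpha\ge\alpha_*$ (which controls size, not sign) can succeed. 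Your concavity argument for the lower bound silently uses the corrected formula, so the proposal is internally inconsistent here.

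With the correct formula there is no obstacle. The tangent-line bound $g(t)\le g(x)+g'(x)(t-x)$ gives $\int_{x-1}^xg\le g(x)-\tfrac12g'(x)$. Hence
\[
c_m^{(m)}\ge\tfrac12\big(g(x)-g(x-1)\big)+\tfrac12g'(x)>\tfrac{1-\alpha}{2}x^{-\alpha}.
\]
The trapezoid bound $\int_{x-1}^xg\ge\tfrac12\big(g(x)+g(x-1)\big)$ gives
\[
c_m^{(m)}\le g(x)-g(x-1)\le(1-\alpha)(x-1)^{-\alpha}\le3x^{-\alpha}\quad\text{(as $x\ge\tfrac32$)},
\]
uniformly in $\alpha\in(0,1)$, which is all that $C_3$ and $C_4$ need.
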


{In contrast to Lemma 4.4 of \cite{du2022temporal}, which relies on a pointwise-in-time weight-variation estimate for the weights defined in \eqref{def:a}, we prove below a cumulative weight-variation estimate. This new estimate is sufficient for the stability and error analysis and enables us to remove the restrictive monotonicity condition $\alpha'(t)\leq 0$ required in \cite{du2022temporal}.}
\begin{lemma}\label{lem:cumulative_weight_variation}
{Assume that $\alpha(t)$ satisfies the condition \eqref{eq:condition_alpha}.} Then, for sufficiently small $\tau$, there exists a positive constant $0 < C_5 < \infty$ independent of $\tau$ and the time level $m$, such that for any $m$ with $2 \leq m \leq M-1$,
\begin{equation}\label{eq: aux1}
    \big|a_i^{(m)}-a_i^{(m-1)}\big|
    \le
    C_5\tau
    \Big(1+\big|\log\big((i+1)\tau\big)\big|\Big)
    \big((i+1)\tau\big)^{-\alpha_{\max}} \qquad \forall 0\leq i\leq m-1,
\end{equation}
where $a_i^{(m)}$ is defined in \eqref{def:a} and \[\alpha_{\max} := \max_{t\in[0,T]} \alpha(t) \in (0,1).\] Consequently, for any $2\le k\le n\le M-1$,
\begin{equation}\label{eq: aux2}
    \tau\sum_{m=k}^{n}
    \Big(a_{m-k}^{(m)}-a_{m-k}^{(m-1)}\Big)_+
    \le C_5\tau,
\end{equation}
where $(x)_+ = \max\{x, 0\}$.
\end{lemma}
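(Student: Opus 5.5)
The plan is to view $a_i^{(m)}$ as a smooth function of two scalar parameters, $\sigma=\sigma_m$ and $\alpha=\alpha_{m+\sigma_m}$, and to bound the differences in these parameters between levels $m-1$ and $m$ by $O(\tau)$. Write $a_i^{(m)} = G_i(\sigma_m,\alpha_{m+\sigma_m})$, where
\[
G_i(\sigma,\alpha) := \frac{\tau^{-\alpha}}{\Gamma(2-\alpha)}\, g_i(\sigma,\alpha),
\]
and $g_i$ is the bracketed expression in \eqref{eq:cdef} for $1\le i\le m-1$. The endpoint cases $i=0$ and $i=m-1$ are treated separately, since the formula for $c_{m-1}^{(m-1)}$ uses the $i=m$ branch while $c_{m-1}^{(m)}$ uses the interior branch.

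\textbf{Step 1: the parameters move by $O(\tau)$.} From Lemma \ref{lemma:nonlinear} we have $\sigma_m = 1-\tfrac12\alpha(t_m+\sigma_m\tau)$. Lipschitz continuity then gives
\[
|\sigma_m-\sigma_{m-1}| \le \tfrac12 L_\alpha\big(\tau+\tau|\sigma_m-\sigma_{m-1}|\big),
\]
so $|\sigma_m-\sigma_{m-1}|\le C\tau$ once $L_\alpha\tau<2$. Consequently $|t_{m+\sigma_m}-t_{m-1+\sigma_{m-1}}|\le 2\tau$ and $|\alpha_{m+\sigma_m}-\alpha_{m-1+\sigma_{m-1}}|\le C\tau$. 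Because $\sigma\in(1/2,1)$ and $\alpha\in[\alpha_{\min},\alpha_{\max}]\subset(0,1)$, all parameters stay in a compact set, so the gamma factors and their derivatives are uniformly bounded.

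\textbf{Step 2: mean value theorem in $(\sigma,\alpha)$.} Apply the mean value theorem to get
\[
|a_i^{(m)}-a_i^{(m-1)}| \le C\tau\big(|\partial_\sigma G_i|+|\partial_\alpha G_i|\big)
\]
at an intermediate point. The key observation is that $g_i$ is a second difference of $x\mapsto \frac{x^{2-\alpha}}{2-\alpha}-\frac{x^{1-\alpha}}{2}$ at $x=i+\sigma$. Write it through the integral form of the second difference. For $i\ge1$ this yields
\[
|g_i| + |\partial_\sigma g_i| \lesssim (i+\sigma)^{-\alpha}, \qquad |\partial_\alpha g_i| \lesssim (1+\log(i+1))(i+\sigma)^{-\alpha},
\]
where the derivative in $\alpha$ of $x^{-\alpha}$ produces the factor $\log x$. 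The prefactor $\tau^{-\alpha}$ contributes, on differentiation in $\alpha$, a factor $|\log\tau|$. Combining,
\[
\tau^{-\alpha}(i+1)^{-\alpha}\big(1+\log(i+1)+|\log\tau|\big).
\]
Since $(i+1)\tau\le T$, we have $((i+1)\tau)^{-\alpha}\le C((i+1)\tau)^{-\alpha_{\max}}$. We also need $\log(i+1)+|\log\tau| \lesssim 1+|\log((i+1)\tau)|$. This holds because $\log(i+1)\le \log(T/\tau)$, so both logarithms are bounded by $|\log\tau|+C$. However, the right-hand side of \eqref{eq: aux1} contains $|\log((i+1)\tau)|$ rather than $|\log\tau|$, so this step needs care. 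I would handle it by rewriting
\[
\tau^{-\alpha}(i+\sigma)^{-\alpha} = \big((i+\sigma)\tau\big)^{-\alpha},
\]
which combines the two factors \emph{before} differentiating in $\alpha$. The derivative then produces exactly $\log((i+\sigma)\tau)$, plus the bounded remainder terms from the second-difference structure. This is the cleanest route. For $i=0$ and for the endpoint $i=m-1$ the same reasoning applies: the expressions are bounded, smooth in $(\sigma,\alpha)$, and of size $((i+1)\tau)^{-\alpha}$. The mismatch between the two branches at $i=m-1$ must also be shown to be $O(\tau)$ times the stated weight. I expect this to be the main obstacle. I would try to bound $|c_{m-1}^{(m)}-c_{m-1}^{(m-1)}|$ directly by Taylor-expanding both formulas at large argument $x=m-1+\sigma$. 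Both behave like $(1-\alpha)x^{-\alpha}+O(x^{-1-\alpha})$, so after scaling by $1/s_m$ their difference is $\tau^{-\alpha}O(x^{-1-\alpha}) = O\big(\tau\,((m)\tau)^{-\alpha-1}\big)$. Since $m\tau\ge c$ is not guaranteed for small $m$, at small $m$ I would instead use the crude bound $C\tau^{-\alpha}$, which is acceptable because there $((i+1)\tau)\sim\tau$.

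\textbf{Step 3: summing to get \eqref{eq: aux2}.} Set $i=m-k$ and bound the positive parts by absolute values:
\[
\tau\sum_{m=k}^n(\cdot)_+ \le C_5\tau\cdot\tau\sum_{j=1}^{n-k+1}\big(1+|\log(j\tau)|\big)(j\tau)^{-\alpha_{\max}}.
\]
The remaining sum is a Riemann sum dominated by $\int_0^{T}(1+|\log s|)s^{-\alpha_{\max}}\,ds<\infty$, since $\alpha_{\max}<1$. The $j=1$ term is controlled by monotonicity of the integrand near $0$. This gives the bound $C\tau$, and after enlarging $C_5$ the estimate \eqref{eq: aux2} follows.
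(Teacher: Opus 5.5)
\paragraph{Verdict: genuine gap at $i=m-1$.} Apart from one point, your argument is the paper's. It uses the Lipschitz bound $|\sigma_m-\sigma_{m-1}|\le C\tau$ and the mean value theorem in $(\alpha,\sigma)$. It uses the integral form of the second difference, absorbing $\tau^{-\alpha}$ into $((i+\sigma+y)\tau)^{-\alpha}$ so that the logarithm appears as $\log((i+1)\tau)$. It finishes with a Riemann-sum bound for $(1+|\log r|)r^{-\alpha_{\max}}$.

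The exception is $i=m-1$, which you rightly flag. There $c_{m-1}^{(m)}$ comes from the interior branch of \eqref{eq:cdef}, but $c_{m-1}^{(m-1)}$ comes from the last branch. The paper's appendix overlooks this and uses the interior formula at both levels. Your treatment of this endpoint does not work:
\begin{itemize}
\item For bounded $m$ you call the crude bound $C\tau^{-\alpha}$ acceptable because $(i+1)\tau\sim\tau$. But then the right-hand side of \eqref{eq: aux1} is $C_5\tau^{1-\alpha_{\max}}(1+|\log\tau|)\to0$, so you have lost the leading factor $\tau$.
\item Your large-argument expansion only helps when $m\tau\gtrsim1$, so the range $1\ll m\ll\tau^{-1}$ is not covered either.
\end{itemize}

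\paragraph{Why it cannot be closed.} No argument can fix this, because \eqref{eq: aux1} is false at $i=m-1$. Take the last branch with denominator $2-\alpha_{m+\sigma_m}$; this is the standard $L2$-$1_\sigma$ form, and your expansion assumes it too. At equal parameters and $x=m-1+\sigma$, the two branches differ by
\[
\frac{(x+1)^{2-\alpha}-x^{2-\alpha}}{2-\alpha}-\frac{(x+1)^{1-\alpha}+x^{1-\alpha}}{2}
=\int_x^{x+1}s^{1-\alpha}\,ds-\frac{x^{1-\alpha}+(x+1)^{1-\alpha}}{2}
=\frac{\alpha(1-\alpha)}{12}\,\xi^{-1-\alpha},\qquad \xi\in(x,x+1).
\]
This is the trapezoidal-rule error for the concave function $s^{1-\alpha}$.

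Divided by $s_{m-1}$, it gives a positive term of exact order $\tau(m\tau)^{-1-\alpha}$ in $a_{m-1}^{(m)}-a_{m-1}^{(m-1)}$. The parameter drift contributes only $O\big(\tau(1+|\log(m\tau)|)(m\tau)^{-\alpha_{\max}}\big)$. So the ratio of the difference to the right-hand side of \eqref{eq: aux1} is $\gtrsim (m\tau)^{\alpha_{\max}-1-\alpha}/(1+|\log(m\tau)|)$. Since $1+\alpha-\alpha_{\max}>0$, this tends to infinity as $m\tau\to0$. For example, $m=2$, $i=1$ gives $a_1^{(2)}-a_1^{(1)}\asymp\tau^{-\alpha}$, against a claimed bound of order $\tau^{1-\alpha_{\max}}|\log\tau|$.

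\paragraph{The fix.} Restrict \eqref{eq: aux1} to $0\le i\le m-2$. There both levels use the same branch of \eqref{eq:cdef}, so your mean-value argument is valid. Then note that \eqref{eq: aux2} involves only $i=m-k\le m-2$, since $k\ge2$. This is also all the stability proof needs: there the differences $a_{m-i}^{(m)}-a_{m-i}^{(m-1)}$ appear only for $2\le i\le m$. With that restriction, your Step 3 goes through unchanged.
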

{\begin{proof}
For readibility, we defer the proof to the Appendix.
\end{proof}}

Last, for the approximation of the first-order and the zero-order time derivatives, we have the following second-order approximation:
\begin{lemma}\label{lemma:integer_approximation}
Assume that {$\ell(t) \in W^{3,\infty}(0,T)$}.  For each $1\leq m\leq M-1$, let us denote
\begin{equation*}\label{eq:frac}
\ell^{m+\sigma_m} := \sigma_m \ell^{m+1} + (1-\sigma_m) \ell^m, \quad \delta_t \ell^{m+\sigma_m} := \frac{(2\sigma_m+1)\ell^{m+1} - 4\sigma_m \ell^m + (2\sigma_m - 1)\ell^{m-1}}{2\tau},
\end{equation*}
where $\sigma_m$ is the unique root of the nonlinear problem \eqref{eq:F}. {Then, there exist positive constants $\{C_i\}_{i=6}^9$ with $C_i\in(0,\infty)$, independent of $\tau$, such that 
\begin{equation*}
\begin{aligned}
& | \ell\left(t_{1 / 2}\right) - \tfrac{1}{2}(\ell^0 + \ell^1) | \leq C_6\tau^2 \|\ell''\|_{L^\infty(0,t_1)}, \quad\quad\ |\ell_t \left(t_{1 / 2}\right) - \tfrac{\ell^1 - \ell^0}{\tau}| \leq C_7\tau^2\|\ell'''\|_{L^\infty(0,t_1)}, \\   
&|\ell\left(t_{m+\sigma_m}\right) - \ell^{m+\sigma_m}| \leq C_8 \tau^2\|\ell''\|_{L^\infty(t_m,t_{m+1})}, \quad \ |\ell_t\left(t_{m+\sigma_m}\right) - \delta_t \ell^{m+\sigma_m}| \leq C_9\tau^2\|\ell'''\|_{L^\infty(t_{m-1},t_{m+1})}. 
\end{aligned}
\end{equation*}}
\end{lemma}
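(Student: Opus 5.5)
The plan is a direct Taylor expansion of each discrete quantity about its evaluation point, $t_{1/2}$ or $t^\ast:=t_{m+\sigma_m}$. I will use the integral form of the remainder, since $\ell\in W^{3,\infty}(0,T)$ only guarantees $\ell'''\in L^\infty$; this form is valid for such $\ell$ because $\ell''$ is absolutely continuous. The approach is to check that the discrete weights reproduce the relevant Taylor moments exactly, so that only a remainder of the stated order is left. All constants will be uniform because Lemma~\ref{lemma:nonlinear} guarantees $\sigma_m\in(\tfrac12,1)$ independently of $\tau$ and $m$.

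\textbf{Interpolation estimates ($C_6$, $C_8$).} Write $\sigma=\sigma_m$. Expanding to first order with remainder gives
\[
\ell^{m+1}=\ell(t^\ast)+(1-\sigma)\tau\,\ell'(t^\ast)+R_+,\qquad \ell^{m}=\ell(t^\ast)-\sigma\tau\,\ell'(t^\ast)+R_-,
\]
with $|R_+|\le \tfrac12(1-\sigma)^2\tau^2\|\ell''\|_{L^\infty(t_m,t_{m+1})}$ and $|R_-|\le\tfrac12\sigma^2\tau^2\|\ell''\|_{L^\infty(t_m,t_{m+1})}$. In the combination $\sigma\ell^{m+1}+(1-\sigma)\ell^m$ the constant terms sum to $\ell(t^\ast)$ and the linear terms cancel, since $\sigma(1-\sigma)-(1-\sigma)\sigma=0$. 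The remaining error is therefore bounded by $\tfrac12\sigma(1-\sigma)\tau^2\|\ell''\|_{L^\infty(t_m,t_{m+1})}$, which gives $C_8=1/8$. The case $m=0$ is the same computation with $\sigma=\tfrac12$ about $t_{1/2}$, giving $C_6$.

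\textbf{Derivative estimates ($C_7$, $C_9$).} For $C_7$ I use the standard central-difference argument: expand $\ell^1$ and $\ell^0$ about $t_{1/2}$ to third order with offsets $\pm\tau/2$. The even terms cancel, leaving an $O(\tau^2)\|\ell'''\|_{L^\infty(0,t_1)}$ remainder.

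For $C_9$ I expand $\ell^{m+1},\ell^m,\ell^{m-1}$ about $t^\ast$ to second order plus an integral remainder involving $\ell'''$. The offsets are $(1-\sigma)\tau$, $-\sigma\tau$ and $-(1+\sigma)\tau$, and the weights are $(2\sigma+1)$, $-4\sigma$ and $(2\sigma-1)$, divided by $2\tau$. I then verify the three moment conditions:
\begin{itemize}
\item The zeroth moment vanishes: $(2\sigma+1)-4\sigma+(2\sigma-1)=0$.
\item The first moment equals one: $\tfrac12\big[(2\sigma+1)(1-\sigma)+4\sigma^2-(2\sigma-1)(1+\sigma)\big]=1$.
\item The second moment vanishes: $(2\sigma+1)(1-\sigma)^2-4\sigma^3+(2\sigma-1)(1+\sigma)^2=0$, by direct expansion.
\end{itemize}
Hence $\delta_t\ell^{m+\sigma_m}-\ell'(t^\ast)$ reduces to $\frac{1}{2\tau}$ times the weighted third-order remainders. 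Each remainder is bounded by $\tfrac16|\text{offset}|^3\|\ell'''\|_{L^\infty(t_{m-1},t_{m+1})}$ with $|\text{offset}|\le 2\tau$. This gives the bound $C_9\tau^2\|\ell'''\|_{L^\infty(t_{m-1},t_{m+1})}$, with $C_9$ a fixed numerical constant because the weights are bounded for $\sigma\in(\tfrac12,1)$.

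\textbf{Expected obstacle.} The only delicate point is the second-moment cancellation for $\delta_t\ell^{m+\sigma_m}$. This identity is what produces second order at the off-grid point $t^\ast$ for arbitrary $\sigma$, and it is exact for every $\sigma$, not only for the root $\sigma_m$; I would confirm it by expanding the polynomial in $\sigma$. Everything else is routine bookkeeping of remainder constants, made uniform by the bounds on $\sigma_m$ from Lemma~\ref{lemma:nonlinear}.
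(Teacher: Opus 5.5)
Your proposal is correct. The zeroth-, first- and second-moment identities you list all check out for every $\sigma$, and combined with integral-form Taylor remainders they give the four bounds with constants uniform in $\sigma_m\in(\tfrac12,1)$. The paper states this lemma without proof, treating it as the standard Taylor-expansion estimate behind the Alikhanov-type $\sigma_m$-point schemes it cites, so your argument is essentially the intended one.
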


 We are now ready to present the fully discrete energy-based DG scheme for \eqref{eq:2nd_system}.

\subsection{Fully discrete energy-based DG discretization}\label{sec:scheme}
Let $u_h^{m} \in U_h^{q_u}$ and $v_h^{m} \in V_h^{q_v}$ be the approximations of $u_h({\bf x}, t_{m})$ and $v_h({\bf x}, t_{m})$, respectively. Building on the spatial semi-discrete energy-based DG scheme \eqref{DG_scheme1} and the difference approximations for the integer and fractional time derivatives introduced in Lemma \ref{lemma:fractional_approximation} and Lemma \ref{lemma:integer_approximation}, we derive the fully discrete energy-based DG scheme: for all $\phi\in U_h^{q_u}$ and $\psi\in V_h^{q_v}$,

\textit{\textbf{Case I: $m=0$}}
\begin{equation}\label{eq:DG_fully1}
\begin{aligned}
\int_K \nabla \phi \cdot \nabla \Big(\frac{u_h^1 - u_h^0}{\tau} - v_h^{1/2}\Big) \ d{\bf x} 
&= \int_{\partial K} \nabla \phi \cdot{\bf n} \Big((v_h^{1/2})^\ast -v_h^{1/2}\Big)\ dS,\\
\int_K \psi \frac{v_h^1 - v_h^0}{\tau} + \nabla \psi \cdot \nabla u_h^{1/2} + \psi \frac{v_h^1 - v_h^0}{s_0} \ d{\bf x} 
&= \int_K \psi f({\cdot, t_{1/2}})\ d{\bf x} + \int_{\partial K} \psi (\nabla u_h^{1/2})^\ast \cdot {\bf n} dS,
\end{aligned}
\end{equation}

\textit{\textbf{Case II: {$1 \leq m \leq M-1$}}}
\begin{equation}
\begin{aligned}
&\int_K \nabla \phi \cdot \nabla \left(\delta_t u_h^{m+\sigma_m} - v_h^{m+\sigma_m}\right)\ d{\bf x} 
= \int_{\partial K} \nabla \phi \cdot {\bf n} \big( (v_h^{m+\sigma_m})^\ast - v_h^{m+\sigma_m} \big) dS,\\
&\int_K \psi \delta_t v_h^{m+\sigma_m} + \nabla\psi \cdot \nabla u_h^{m+\sigma_m} + \psi \sum_{i=0}^m {a_{m-i}^{(m)}} \big(v_h^{i+1} - v_h^i\big) \ d {\bf x} \\
=& \int_K \psi f(\cdot, t_{m+\sigma_m}) \ d{\bf x} + \int_{\partial K} \psi (\nabla u_h^{m+\sigma_m})^\ast\cdot {\bf n} \ dS. \label{eq:DG_fully1_m}
\end{aligned}
\end{equation}

Moreover, to ensure the solvability of the fully discrete scheme \eqref{eq:DG_fully1}--\eqref{eq:DG_fully1_m} similar to the semi-discrete case, it is necessary to impose additional equations to determine the mean value of the approximation of the first-order time derivative. Specifically, we impose the following discretizations for computational purposes:
\[
\int_K \tilde{\phi} \Big(\frac{u_h^1 - u_h^0}{\tau} - v_h^{1/2}\Big) d{\bf x} = 0\quad \text{and}\quad 
\int_K \tilde{\phi} \left(\delta_t u_h^{m+\sigma_m} - v_h^{m+\sigma_m}\right) d{\bf x} = 0 \quad \forall \tilde{\phi}\in U_h^0,
\]
where $m=1,2,\cdots,M-1$.

In the following sections, we study the stability and error estimates of the fully discrete energy-based DG scheme \eqref{eq:DG_fully1}--\eqref{eq:DG_fully1_m}. We use standard notation for Sobolev spaces and norms. Since the numerical solution is discontinuous across element interfaces, we denote by $\nabla_h$ the elementwise gradient, namely, $(\nabla_h w)|_K := \nabla (w|_K)$ for all $K\in\mathcal T_h$. Thus, $\|\nabla_h w\|_{L^2(\Omega_h)}^2
    :=
    \sum_{K\in\mathcal T_h}\|\nabla w\|_{L^2(K)}^2$. For \(L^2\) norms, we write \(\|w\|_{L^2(\Omega_h)}\), with the understanding that $\|w\|_{L^2(\Omega_h)}^2
    = \sum_{K\in\mathcal T_h}\|w\|_{L^2(K)}^2$ for piecewise \(L^2\) functions. Finally, we use \(C\) to denote a generic positive constant that may vary from line to line but is independent of the time step size \(\tau\), the mesh size \(h\), and the time level $m$.


\section{Stability and Error Estimates}\label{sec:stability}

In this section, we establish the stability and error estimates of the fully discrete energy-based DG scheme \eqref{eq:DG_fully1}--\eqref{eq:DG_fully1_m}.

\subsection{Energy stability}\label{sec:h1stability}

The first set of main results of this work establishes energy stability and is given as follows.

\begin{truth}\label{th:discrete_stability}
{Assume that $\alpha(t)$ satisfies the condition \eqref{eq:condition_alpha}}. Then, for sufficient small $\tau$, there exists a positive constant $C$ independent of the time step size $\tau$ and the spatial mesh size $h$ such that the following inequality holds for $(u_{h}^{m},v_h^m) \in U_{h}^{q_u}\times V_h^{q_v}$, given by the fully discrete energy-based DG scheme \eqref{eq:DG_fully1}--\eqref{eq:DG_fully1_m},
\begin{equation}\label{eq:discrete_stable}
\begin{aligned}
&\|\nabla_h u_h^m\|_{L^2(\Omega_h)}^2 + \left\| v_{h}^{m}\right\|_{L^2(\Omega_h)}^2 \\
\leq &\ C\Big(\| \nabla_h u_{h}^{0}\|_{L^2(\Omega_h)}^2 + \left\| v_{h}^{0}\right\|_{L^2(\Omega_h)}^2 + \tau \|f(\cdot, t_{1/2})\|_{L^2(\Omega_h)}^2+\tau \sum_{i=1}^{m-1}\left\|f(\cdot, t_{i+\sigma_i})\right\|_{L^2(\Omega_h)}^{2}\Big) 
\end{aligned}\quad \forall m,
\end{equation}
where $t_{1/2} = t_0 + \tau/2$ and $t_{i+\sigma_i} = t_i + \sigma_i \tau$ with $\sigma_i$ being the root of \eqref{eq:F}.
\end{truth}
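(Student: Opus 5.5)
The plan is a discrete energy argument. In each step I choose test functions that cancel the wave coupling and the interface terms, so that what remains is the discrete time derivative of the energy, plus memory terms and source terms.

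\textbf{Step 1 ($m=0$).} In \eqref{eq:DG_fully1} take $\phi=u_h^{1/2}$ and $\psi=v_h^{1/2}$, sum over $K\in\mathcal T_h$, and add the two equations. The volume coupling $\int\nabla u_h^{1/2}\cdot\nabla v_h^{1/2}$ cancels. With the fluxes \eqref{flux1} and periodicity, the interface terms combine into $-\sum_F\int_F\big(\zeta|[\nabla u_h^{1/2}]|^2+\gamma|[v_h^{1/2}]|^2\big)\,dS\le 0$; the $\theta$-terms cancel exactly as in \cite{appelo2015new}. The term $s_0^{-1}\int (v_h^1-v_h^0)v_h^{1/2} = \tfrac{1}{2s_0}(\|v_h^1\|^2-\|v_h^0\|^2)$ has a good sign after rearrangement. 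Cauchy--Schwarz on $f$ then gives $E^1\le (1+C\tau)E^0 + C\tau\|f(t_{1/2})\|^2$, where $E^m:=\|\nabla_h u_h^m\|^2+\|v_h^m\|^2$.

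\textbf{Step 2 ($m\ge1$).} In \eqref{eq:DG_fully1_m} take $\phi=u_h^{m+\sigma_m}$ and $\psi=v_h^{m+\sigma_m}$. The same cancellation of the coupling and flux terms occurs. For the BDF-type operator I use the standard Alikhanov-type identity (cf.\ \cite{du2022temporal}): there exist a positive definite quadratic form $\mathcal{E}(w^{m+1},w^m)$, equivalent to $\|w^{m+1}\|^2$ plus $(\sigma_m$-dependent$)\|w^{m+1}-w^m\|^2$, and a constant $c>0$ such that
\[
2\tau(\delta_t w^{m+\sigma_m},w^{m+\sigma_m})\ge \mathcal{E}^{m+1}-\mathcal{E}^m - C\tau\,|\sigma_{m}-\sigma_{m-1}|\cdot(\text{energy}).
\]
Because $\sigma_m$ varies only through the Lipschitz $\alpha$, we have $|\sigma_m-\sigma_{m-1}|\le C\tau$, so the cross-step mismatch of $\mathcal{E}$ is absorbed into a Grönwall term. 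The same inequality applies to $u$ in the seminorm $\|\nabla_h\cdot\|$.

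For the memory term, Lemma \ref{lemma:ineq} (monotone positive $c_i^{(m)}$) together with Alikhanov's Lemma 4 \cite{alikhanov2015new} gives
\[
\Big(\sum_i a_{m-i}^{(m)}(v^{i+1}-v^i),\,v^{m+\sigma_m}\Big)\ge \tfrac12\sum_{i=0}^m a_{m-i}^{(m)}\big(\|v^{i+1}\|^2-\|v^i\|^2\big).
\]
The right-hand side equals $\tfrac12\big(a_0^{(m)}\|v^{m+1}\|^2-\sum_{j=1}^{m}(a_{m-j}^{(m)}-a_{m-j+1}^{(m)})\|v^j\|^2 - a_m^{(m)}\|v^0\|^2\big)$.

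\textbf{Step 3 (summation over $m$; the main obstacle).} Summing these memory quantities in $m$ does not telescope, because the weights depend on $m$. Writing $\sum_{i}a_{m-i}^{(m)}(\|v^{i+1}\|^2-\|v^i\|^2)$ as $\Phi^{m}$ and comparing it with $\Phi^{m-1}$, the defect consists of terms $(a_{m-k}^{(m)}-a_{m-k}^{(m-1)})\|v^k\|^2$. I will bound only the positive parts. Exchanging the order of summation and applying \eqref{eq: aux2} from Lemma \ref{lem:cumulative_weight_variation} gives
\[
\sum_{m}\sum_k (a_{m-k}^{(m)}-a_{m-k}^{(m-1)})_+\|v^k\|^2\le C_5\sum_k\|v^k\|^2 ,
\]
that is, a term $\tau^{-1}\cdot C\tau$ which becomes $C\tau\sum_k\|v^k\|^2$ after multiplying by $\tau$. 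The boundary pieces $\tau a_m^{(m)}\|v^0\|^2$ are controlled by the bound $\tau\sum a_i^{(i)}\le C_4$ in Lemma \ref{lemma:ineq}. The leftover nonnegative memory quantity is simply dropped, which is allowed because it appears with a favorable sign. This bookkeeping is the delicate step: I expect to handle it by carefully tracking the $\tau$ factors, using that $s_m\sim\tau^{\alpha}$.

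\textbf{Step 4 (Grönwall).} Adding Steps 1--3 yields
\[
E^{n}\le C\Big(E^0+\tau\|f(t_{1/2})\|^2+\tau\sum_{i=1}^{n-1}\|f(t_{i+\sigma_i})\|^2\Big)+C\tau\sum_{k\le n}E^k .
\]
For $\tau$ small, the discrete Grönwall inequality then gives \eqref{eq:discrete_stable}, using the equivalence of $\mathcal{E}$ with $E$ (which holds because $\sigma_m\in(\tfrac12,1)$).
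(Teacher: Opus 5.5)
Your proposal is correct and follows essentially the same route as the paper's proof. The shared steps are: the $\mathcal{A}$-functional inequality for $\delta_t$, Alikhanov's inequality for the memory term, and a comparison of the level-$m$ and level-$(m-1)$ weight sums in which only the positive parts $\big(a_{m-k}^{(m)}-a_{m-k}^{(m-1)}\big)_+$ survive, controlled by \eqref{eq: aux2} after exchanging the order of summation; then Lemma~\ref{lemma:ineq} for the boundary pieces, a separate estimate for the $m=0$ step, and the discrete Gr\"onwall inequality. Two bookkeeping remarks: the $k=1$ defect multiplying $\|v_h^1\|_{L^2(\Omega_h)}^2$ is not covered by \eqref{eq: aux2}, which requires $k\ge 2$, and should instead be bounded via $\tau\sum_m a_{m-1}^{(m)}\le C_3+C_4$, in the same spirit as the paper's use of $C_3$. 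Also, your explicit treatment of the mismatch between $\sigma_m$ and $\sigma_{m-1}$ in the quadratic form (which the paper passes over silently in \eqref{eq:aux3}) gives a defect of size $C|\sigma_m-\sigma_{m-1}|\le C\tau$ times the energy, not $C\tau|\sigma_m-\sigma_{m-1}|$; Gr\"onwall still absorbs it.
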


\begin{proof}
We note that the fully discrete scheme \eqref{eq:DG_fully1}--\eqref{eq:DG_fully1_m} adopts different time integration methods for $m = 0$ (see \eqref{eq:DG_fully1}) and ${1\leq m \leq M-1}$ (see \eqref{eq:DG_fully1_m}). We proceed with an analysis for the case of ${1\leq m \leq M-1}$ first (\underline{\textit{step one}}), and then for the case of $m=0$ (\underline{\textit{step two}}).

\textbf{Step one:} $m \geq 1$.

We choose the test functions $\phi = u_h^{m+\sigma_m}$ and $\psi = v_h^{m+\sigma_m}$ in the scheme \eqref{eq:DG_fully1_m}. Summing the resulting equations over all elements $K$ we obtain
\begin{equation}\label{eq:stablity1}
\begin{aligned}
&\sum_{K\in \mathcal{T}_h} \int_{K} \sum_{i=0}^{m} a_{m-i}^{(m)} v_h^{m+\sigma_m}(v_h^{i+1}-v_h^i) d {\bf x}+ \nabla u_h^{m+\sigma_m}\cdot \nabla \delta_tu_h^{m+\sigma_m} + v_h^{m+\sigma_m}\delta_t v_h^{m+\sigma_m}\ d{\bf x} \\
= &\sum_{K\in \mathcal{T}_h} \int_{\partial K} \nabla u_h^{m+\sigma_m} \cdot {\bf n} \big( (v_h^{m+\sigma_m})^\ast - v_h^{m+\sigma_m} \big) + v_h^{m+\sigma_m} (\nabla u_h^{m+\sigma_m})^\ast\cdot {\bf n}\ dS \\
+ &\sum_{K\in \mathcal{T}_h} \int_K v_h^{m+\sigma_m} f(\cdot, t_{m+\sigma_m})\ d{\bf x}.
\end{aligned}
\end{equation}
Let \(\mathcal F_h\) denote the set of all interelement faces, with periodic boundary faces identified according to the periodic boundary condition. Using the numerical fluxes defined in \eqref{flux1} and periodic boundary conditions, we have
\begin{equation}\label{eq:aux1}
\begin{aligned}
&\sum_{K\in \mathcal{T}_h} \int_{\partial K} \nabla u_h^{m+\sigma_m} \cdot {\bf n} \big( (v_h^{m+\sigma_m})^\ast - v_h^{m+\sigma_m} \big) + v_h^{m+\sigma_m} (\nabla u_h^{m+\sigma_m})^\ast\cdot {\bf n}\ dS 
\\
=& \sum_{F\in \mathcal{F}_h}\int_F - \gamma \big|[v_h^{m+\sigma_m}]\big|^2 - \zeta |[\nabla u_h^{m+\sigma_m}]|^2\ dS.
\end{aligned}
\end{equation}
Substituting \eqref{eq:aux1} into (\ref{eq:stablity1}) and using $\gamma,\zeta\geq 0$ yields
\begin{equation}\label{eq:aux2}
\begin{aligned}
& \sum_{K\in\mathcal{T}_h}\int_K \sum_{i=0}^{m} a_{m-i}^{(m)}  v_h^{m+\sigma_m}(v_h^{i+1}-v_h^i) + \nabla u_h^{m+\sigma_m}\cdot  \delta_t \nabla u_h^{m+\sigma_m} + v_h^{m+\sigma_m}\delta_t v_h^{m+\sigma_m} d{\bf x}\\
\leq & \sum_{K\in\mathcal{T}_h}\int_{K} v_h^{m+\sigma_m} f(\cdot, t_{m+\sigma_m})\ d{\bf x}. 
\end{aligned}
\end{equation}
To proceed, we first estimate the second and third terms on the left-hand-side of \eqref{eq:aux2}. {To this end,} we introduce the auxiliary function \(\mathcal{A}(\cdot)\) and provide its corresponding estimates in the following
\begin{equation}\label{eq:A}
\begin{aligned}
\mathcal{A}(w^m) := &\left(2 \sigma_{m-1}+1\right)\left\|w^m\right\|_{L^2(\Omega_h)}^2-\left(2 \sigma_{m-1}-1\right)\left\|w^{m-1}\right\|_{L^2(\Omega_h)}^2\\
&+\left(2 \sigma_{m-1}^2+\sigma_{m-1}-1\right)\left\|w^m-w^{m-1}\right\|_{L^2(\Omega_h)}^2 \\
\geq & \frac{1}{\sigma_{m-1}} \|w_h^m\|_{L^2(\Omega_h)}^{{2}}.
\end{aligned}
\end{equation}
By direct calculation (see, e.g., Lemma 3.5 in \cite{sun2016some}), we find that
\begin{equation}\label{eq:aux3}
\begin{aligned}
\sum_{K\in\mathcal{T}_h}\int_{K} v_h^{m+\sigma_m} \delta_t v_h^{m+\sigma_m} d{\bf x} 
&\geq \frac{1}{4\tau}\Big(\mathcal{A}\big(v_h^{m+1}\big) - \mathcal{A}\big(v_h^{m}\big)\Big),\\
\sum_{K\in\mathcal{T}_h}\int_{K} \nabla u_h^{m+\sigma_m} \cdot \delta_t \nabla u_h^{m+\sigma_m} d{\bf x} 
&\geq \frac{1}{4\tau}\Big(\mathcal{A}\big(\nabla_h u_h^{m+1}\big) - \mathcal{A}\big(\nabla_h u_h^{m}\big)\Big).
\end{aligned}
\end{equation}
Substituting \eqref{eq:aux3} into \eqref{eq:aux2}, we arrive at
\begin{equation}\label{eq:aux4}
\begin{aligned}
&\sum_{K\in\mathcal{T}_h}\int_K \sum_{i=0}^{m} {a_{m-i}^{(m)}}  v_h^{m+\sigma_m}(v_h^{i+1}-v_h^i) \ d {\bf x} + \frac{1}{4\tau}\left(\mathcal{A}\big(v_h^{m+1}\big) + \mathcal{A}\big(\nabla_h u_h^{m+1}\big)\right)\\
\leq & \ \frac{1}{4\tau}\left(\mathcal{A}\big(v_h^{m}\big) + \mathcal{A}\big(\nabla_h u_h^{m}\big)\right) + \sum_{K\in\mathcal{T}_h}\int_K v_h^{m+\sigma_m} f(\cdot, t_{m+\sigma_m})\ d{\bf x}.
\end{aligned}
\end{equation}

To estimate the first term on the left-hand-side of \eqref{eq:aux2}, we have from straightforward calculations
\begin{equation}\label{eq:aux5}
\begin{aligned}
&\frac{1}{2}\Big(a_0^{(m)}\left\|v_h^{m+1}\right\|_{L^2(\Omega_h)}^2-\sum_{i=1}^m\big({a_{m-i}^{(m)}}-{a_{m-i+1}^{(m)}}\big)\left\|v_h^i\right\|_{L^2(\Omega_h)}^2-{a_m^{(m)}}\left\|v^0_h\right\|_{L^2(\Omega_h)}^2\Big) \\
\leq & \sum_{K\in\mathcal{T}_h}\int_K\sum_{i=0}^{m} {a_{m-i}^{(m)}} v_h^{m+\sigma_m}(v_h^{i+1}-v_h^i) \ d {\bf x}.
\end{aligned}
\end{equation}
Plugging \eqref{eq:aux4} and \eqref{eq:aux5} into \eqref{eq:aux2} gives
\begin{equation}\label{eq:aux6}
\begin{aligned}
&\mathcal{A}\big(v_h^{m+1}\big) + \mathcal{A}\big(\nabla_h u_h^{m+1}\big) + 2\tau \sum_{i=2}^{m+1} {a_{m-i+1}^{(m)}} \|v_h^i\|_{L^2(\Omega_h)}^2\\
\leq &\  \mathcal{A}\big(v_h^{m}\big) + \mathcal{A}\big(\nabla_h u_h^{m}\big) + 2\tau\sum_{i=2}^m {a_{m-i}^{(m)}} \|v_h^i\|_{L^2(\Omega_h)}^2 +  4\tau\sum_{K\in\mathcal{T}_h}\int_K v_h^{m+\sigma_m} f(\cdot, t_{m+\sigma_m})\ d{\bf x} \\
&+ 2\tau\Big(\big({a_{m-1}^{(m)}} - {a_m^{(m)}}\big)\left\|v_h^{1}\right\|_{L^2(\Omega_h)}^2 +{a_m^{(m)}}\left\|v^0_h\right\|_{L^2(\Omega_h)}^2\Big).
\end{aligned}
\end{equation}
For $1\leq m\leq M-1$, denote further by
\begin{equation}\label{eq:Q}
\mathcal{Q}^{m+1} := \mathcal{A}\big(v_h^{m+1}\big) + \mathcal{A}\big(\nabla_h u_h^{m+1}\big) + 2\tau \sum_{i=2}^{m+1} {a_{m-i+1}^{(m)}} \|v_h^i\|_{L^2(\Omega_h)}^2,
\end{equation}
{we then obtain from \eqref{eq:aux6} that
\begin{equation}\label{eq: auxx1}
\begin{aligned}
\mathcal{Q}^{m+1} \leq &\ \mathcal{Q}^m + 2\tau \sum_{i=2}^m \Big(a_{m-i}^{(m)} - a_{m-i}^{(m-1)}\Big)_+\|v_h^i\|_{L^2(\Omega_h)}^2+4\tau\sum_{K\in\mathcal{T}_h}\int_K v_h^{m+\sigma_m} f(\cdot, t_{m+\sigma_m})\ d{\bf x} \\
&+ 2\tau\Big(\big(a_{m-1}^{(m)} - a_m^{(m)}\big)\left\|v_h^{1}\right\|_{L^2(\Omega_h)}^2 +a_m^{(m)}\left\|v^0_h\right\|_{L^2(\Omega_h)}^2\Big).
\end{aligned}
\end{equation}
Summing the above inequality in the index $m$ from $2$ to $n$ with $2\leq n\leq M-1$, and using the fact
\[\sum_{m=2}^n\sum_{i=2}^m \Big(a_{m-i}^{(m)} - a_{m-i}^{(m-1)}\Big)_+\|v_h^i\|_{L^2(\Omega_h)}^2 = \sum_{i=2}^n\|v_h^i\|_{L^2(\Omega_h)}^2\sum_{m=i}^n \Big(a_{m-i}^{(m)} - a_{m-i}^{(m-1)}\Big)_+,\]
we then arrive at the following estimate after invoking Lemma \ref{lem:cumulative_weight_variation}
\begin{equation}\label{eq: auxx2}
\begin{aligned}
\mathcal{Q}^{n+1}& \leq \ \mathcal{Q}^2 + 2C_5\tau \sum_{i=2}^n\|v_h^i\|_{L^2(\Omega_h)}^2 + 4\tau \sum_{m=2}^n\sum_{K\in\mathcal{T}_h}\int_K v_h^{m+\sigma_m} f(\cdot, t_{m+\sigma_m})\ d{\bf x}  \\
& + 2\tau\sum_{m=2}^n\bigg(\Big(a_{m-1}^{(m)} - a_m^{(m)}\Big)\left\|v_h^{1}\right\|_{L^2(\Omega_h)}^2 +a_m^{(m)}\left\|v^0_h\right\|_{L^2(\Omega_h)}^2\bigg)\qquad \forall 2\leq n\leq M-1.
\end{aligned}
\end{equation}
On the other hand, from \eqref{eq: auxx1}, we have
\begin{equation}\label{eq: auxx3}
\mathcal{Q}^2 \leq \mathcal{Q}^1 + 4\tau\sum_{K\in\mathcal{T}_h} \int_K v_h^{1+\sigma_1}f(\cdot, t_{1+\sigma_1})d{\bf x} + 2\tau\Big(\big(a_{0}^{(1)} - a_1^{(1)}\big)\left\|v_h^{1}\right\|_{L^2(\Omega_h)}^2 +a_1^{(1)}\left\|v^0_h\right\|_{L^2(\Omega_h)}^2\Big).
\end{equation}
Substituting \eqref{eq: auxx3} into \eqref{eq: auxx2}, using the coercivity estimate \eqref{eq:A}, Young's inequality and Lemma \ref{lemma:ineq}, we get
\[\begin{aligned}
\mathcal{Q}^{n+1} \leq &\ \mathcal{Q}^1 + 2C_5\tau \sum_{i=2}^n \mathcal{Q}^i + \frac{\tau}{2} \sum_{m=1}^n (\mathcal{Q}^{m+1} + \mathcal{Q}^m) + 8\tau\sum_{m=1}^n\|f(\cdot,t_{m+\sigma_m})\|_{L^2(\Omega_h)}^2 \\
& + 2\Big(C_3\left\|v_h^{1}\right\|_{L^2(\Omega_h)}^2 +C_4\left\|v^0_h\right\|_{L^2(\Omega_h)}^2\Big).
\end{aligned}\]
Thus, for sufficiently small $\tau$ (e.g., $\tau \leq 1$ here), we have
\[\mathcal{Q}^{n+1} \leq C\Big(\mathcal{Q}^1+\left\|v_h^0\right\|_{L^2\left(\Omega_h\right)}^2+\left\|v_h^1\right\|_{L^2\left(\Omega_h\right)}^2+\tau \sum_{m=1}^n\left\|f\left(\cdot, t_{m+\sigma_m}\right)\right\|_{L^2\left(\Omega_h\right)}^2\Big)+C \tau \sum_{m=1}^n \mathcal{Q}^m.\]
The above argument gives the estimate for \(2\le n\le M-1\). 
For \(n=1\), the same bound follows directly from \eqref{eq: auxx3}. 
Therefore, after increasing the generic constant if necessary, we obtain
\[
\mathcal{Q}^{n+1} \!\leq\! 
C\Big(\!\mathcal{Q}^1+\|v_h^0\|_{L^2(\Omega_h)}^2
+\|v_h^1\|_{L^2(\Omega_h)}^2
+\tau\!\! \sum_{m=1}^n
\|f(\cdot,t_{m+\sigma_m})\|_{L^2(\Omega_h)}^2\!\Big)
+C\tau \sum_{m=1}^n \mathcal{Q}^m,
\quad 1\le n\le M-1.
\]
Then, the discrete Gr\"onwall inequality yields
\begin{equation}\label{eq:aux8}
\mathcal{Q}^{n+1} \leq C\Big(\mathcal{Q}^1+\left\|v_h^0\right\|_{L^2\left(\Omega_h\right)}^2+\left\|v_h^1\right\|_{L^2\left(\Omega_h\right)}^2+\tau \sum_{m=1}^n\left\|f\left(\cdot, t_{m+\sigma_m}\right)\right\|_{L^2\left(\Omega_h\right)}^2\Big)\qquad 1\le n\le M-1.
\end{equation}
Given the definition of $\mathcal{Q}^1$ in \eqref{eq:Q}, we are left with the estimate of \(\|\nabla_h u_h^1\|_{L^2(\Omega_h)}\) and \(\|v_h^1\|_{L^2(\Omega_h)}\).
}

\textbf{Step two:} $m = 0$.

To proceed, we set \(\phi = u_h^{1/2}\) and \(\psi = v_h^{1/2}\) in the scheme \eqref{eq:DG_fully1}. Adding the resulting equations, summing over all elements \(K\), and applying the definition of the numerical fluxes in \eqref{flux1}, we then invoke Young's inequality to get
\begin{equation*}
\begin{aligned}
&\frac{1}{2\tau}\Big(\|\nabla_h u_h^1\|_{L^2(\Omega_h)}^2 - \|\nabla_h u_h^0\|_{L^2(\Omega_h)}^2\Big) + \Big(\frac{1}{2\tau} + \frac{1}{2s_0}\Big)\big(\|v_h^1\|_{L^2(\Omega_h)}^2 - \|v_h^0\|_{L^2(\Omega_h)}^2\big) \\
\leq & \ \frac{1}{2s_0} \|v_h^1\|^2_{L^2(\Omega_h)} + \frac{s_0}{4}\|f(\cdot, t_{1/2})\|_{L^2(\Omega_h)}^2 + \frac{1}{2s_0} \|v_h^0\|^2_{L^2(\Omega_h)}.
\end{aligned}
\end{equation*}

Thus, we conclude that
\begin{equation}\label{eq:aux9}
\|\nabla_h u_h^1\|_{L^2(\Omega_h)}^2 + \|v_h^1\|_{L^2(\Omega_h)}^2 \leq \|\nabla_h u_h^0\|_{L^2(\Omega_h)}^2  + \big(1+\frac{2\tau}{s_0}\big) \|v_h^0\|_{L^2(\Omega_h)}^2 + \frac{\tau s_0}{2}\|f(\cdot, t_{1/2})\|_{L^2(\Omega_h)}^2.
\end{equation}

Finally, substituting \eqref{eq:aux9} into \eqref{eq:aux8} completes the proof, since $a_i^{(m)} > 0$ by Lemma \ref{lemma:fractional_approximation} and Lemma \ref{lemma:ineq}.
\end{proof}

\subsection{Error Estimates}\label{sec:error_full}

We now derive error estimates for the fully discrete energy-based DG scheme \eqref{eq:DG_fully1}--\eqref{eq:DG_fully1_m}. We define the numerical errors by 
\begin{equation}\label{eq:def_e}
e_u^m := u^m - u_h^m,\quad e_v^m := v^m - v_h^m,\quad  m = 0,1,\cdots, M.
\end{equation}

{To proceed, we compare the numerical solution
$(u_h^m,v_h^m)$ with suitable projections of the exact solution
$(u^m,v^m)$. Let $\tilde u_h^m$ be the elementwise $H^1$ projection of
$u^m$ onto $U_h^{q_u}$, and let $\tilde v_h^m$ be the elementwise
$L^2$ projection of $v^m$ onto $V_h^{q_v}$. Throughout the error analysis,
the polynomial degrees are chosen from the admissible set
\begin{equation}\label{eq:degree_admissible}
\mathcal{Q}_{\rm adm}
:=
\left\{(q_u,q_v)\in \mathbb{N}_0^2:\ q_u\ge 1,\quad
q_u-2\le q_v\le q_u\right\}.
\end{equation}
Equivalently, the admissible degree pairs consist of the following three
cases:
\begin{equation*}\label{eq:degree_cases}
\begin{aligned}
\mathcal{Q}_{0}
&:=\left\{(q,q): q\ge 1\right\}, &&\text{equal-order approximation},\\
\mathcal{Q}_{1}
&:=\left\{(q,q-1): q\ge 1\right\}, &&\text{one-degree lower approximation for }v_h,\\
\mathcal{Q}_{2}
&:=\left\{(q,q-2): q\ge 2\right\}, &&\text{two-degree lower approximation for }v_h.
\end{aligned}
\end{equation*}
Thus,
\[
\mathcal{Q}_{\rm adm}=\mathcal{Q}_{0}\cup\mathcal{Q}_{1}\cup\mathcal{Q}_{2}.
\]
The restriction \eqref{eq:degree_admissible} is imposed only for the
projection-based error estimate. It is not required for the stability of
the numerical scheme, which holds for general choices of the discrete
spaces $U_h^{q_u}$ and $V_h^{q_v}$. More precisely, for each element $K$, the projections
$(\tilde u_h^m,\tilde v_h^m)\in U_h^{q_u}\times V_h^{q_v}$ are defined by
\begin{equation}\label{def:proj}
\int_{K} \nabla \phi \cdot\nabla \eta_u^m\ d{\bf x} = \int_{K} \psi \eta_v^m \ d{\bf x}= \int_{K} \eta_u^m \ d{\bf x} = 0 \quad \forall \phi \in U_h^{q_u}|_K, \psi \in V_h^{q_v}|_K,
\end{equation}
where
\begin{equation}\label{eq:eta}
 \eta_u^m := \tilde{u}_h^m - u^m,\quad \eta_v^m := \tilde{v}_h^m - v^m.
\end{equation}}

{Before stating the error estimate, we first present a lemma that will be used extensively in the rest of the analysis.}

\begin{lemma}\label{lemma:projection}
{Assume that $\mathcal T_h$ is a shape-regular and quasi-uniform family of affine simplicial or tensor-product meshes, and let $h:=\max_{K\in\mathcal T_h} h_K$. Let $\eta_u^m$ and $\eta_v^m$, $m=0,1,\ldots,M$, be defined by \eqref{eq:eta}, where $\tilde u_h^m$ is the elementwise $H^1$ projection of $u^m$ onto $U_h^{q_u}$, and $\tilde v_h^m$ is the elementwise $L^2$ projection of $v^m$ onto $V_h^{q_v}$ as defined in \eqref{def:proj}. Let
\[
\bar q:=\min\{q_u-1,q_v\},
\qquad
(q_u,q_v)\in\mathcal Q_{\rm adm},
\]
where $\mathcal Q_{\rm adm}$ is defined in \eqref{eq:degree_admissible}. Then, for each element $K\in\mathcal T_h$, there exists a constant $C>0$, independent of $h$ and $m$, such that
\begin{equation*}\label{eq:projection_volume}
\begin{aligned}
\|\eta_v^m\|_{L^2(K)}^2
+
\|\nabla \eta_u^m\|_{L^2(K)}^2
\le
C h^{2\bar q+2}
\left(
|u(\cdot,t_m)|_{H^{\bar q+2}(K)}^2
+
|v(\cdot,t_m)|_{H^{\bar q+1}(K)}^2
\right).
\end{aligned}
\end{equation*}}
{Moreover, let the starred traces, $(\cdot)^\ast$, be defined by the numerical trace operator in \eqref{flux1}. For periodic boundary conditions, boundary faces are identified with their periodic neighboring faces. Let $\omega_K$ denote the set consisting of $K$ and all its face-neighboring elements, including periodic neighbors. Then
\begin{equation*}\label{eq:projection_trace_star}
\|(\eta_v^m)^*\|_{L^2(\partial K)}^2
+
\|(\nabla \eta_u^m)^*\cdot \mathbf n\|_{L^2(\partial K)}^2 
 \le
C h^{2\bar q+1}
\sum_{K'\in\omega_K}
\left(
|u(\cdot,t_m)|_{H^{\bar q+2}(K')}^2
+
|v(\cdot,t_m)|_{H^{\bar q+1}(K')}^2
\right).
\end{equation*}}
{Finally, for the discrete errors between DG solution $(u_u^m, v_h^m)$ and the projection $(\tilde u_h^m, \tilde v_h^m)$
\[
\xi_u^m:=\tilde u_h^m-u_h^m,
\qquad
\xi_v^m:=\tilde v_h^m-v_h^m,
\]
one has
\begin{equation*}\label{eq:projection_inverse_trace}
\|\xi_v^m\|_{L^2(\partial K)}^2
+
\|\nabla \xi_u^m\cdot \mathbf n\|_{L^2(\partial K)}^2
\le
C h^{-1}
\left(
\|\xi_v^m\|_{L^2(K)}^2
+
\|\nabla \xi_u^m\|_{L^2(K)}^2
\right).
\end{equation*}}
\end{lemma}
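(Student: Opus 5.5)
The proof rests on three standard tools: the Bramble--Hilbert lemma on an affine reference element, scaling arguments, and the trace and inverse inequalities that shape-regularity and quasi-uniformity make available. I would treat the three estimates in the order stated.

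\textbf{Volume estimate.} Since $\tilde v_h^m$ is the elementwise $L^2$ projection onto $\mathcal P^{q_v}(K)$, it is the best $L^2(K)$ approximation. Because $\bar q\le q_v$, we have $\mathcal P^{\bar q}(K)\subset\mathcal P^{q_v}(K)$, so comparing with the best $\mathcal P^{\bar q}$ approximation and applying Bramble--Hilbert gives $\|\eta_v^m\|_{L^2(K)}\le Ch^{\bar q+1}|v(\cdot,t_m)|_{H^{\bar q+1}(K)}$. For $\eta_u^m$, the conditions in \eqref{def:proj} say that $\nabla\tilde u_h^m$ is the $L^2(K)$-orthogonal projection of $\nabla u^m$ onto $\nabla\mathcal P^{q_u}(K)$; the mean-value condition only fixes the constant. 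Hence $\|\nabla\eta_u^m\|_{L^2(K)}\le\inf_{p\in\mathcal P^{q_u}(K)}\|\nabla(u^m-p)\|_{L^2(K)}$. Since $\bar q+1\le q_u$, Bramble--Hilbert gives the bound $Ch^{\bar q+1}|u|_{H^{\bar q+2}(K)}$. Squaring and adding yields the first estimate.

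\textbf{Trace of the numerical fluxes.} By \eqref{flux1}, on each face $(\eta_v^m)^*$ is a combination, with mesh-independent coefficients $\theta,1-\theta,\zeta$, of the traces $(\eta_v^m)^\pm$ and $[\nabla\eta_u^m]$. Likewise $(\nabla\eta_u^m)^*\cdot\mathbf n$ combines $(\nabla\eta_u^m)^\pm\cdot\mathbf n$ and $\gamma[\eta_v^m]$. Because the exact solution is smooth, its jumps vanish, so every term is a one-sided trace of a projection error from $K$ or from a face neighbour $K'\in\omega_K$. Each such trace is handled by the scaled trace inequality
\[
\|w\|_{L^2(\partial K')}^2\le C\big(h^{-1}\|w\|_{L^2(K')}^2+h\|\nabla w\|_{L^2(K')}^2\big).
\]
\begin{itemize}
\item For $w=\eta_v^m$, the $L^2$ part is controlled by step one. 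For the gradient part I will not use a bound on the projection error itself. Instead I will insert the best $\mathcal P^{\bar q}$ approximant $p$ of $v$ and use $L^2$-stability of the projection together with an inverse inequality on $\Pi(v-p)$. This gives $\|\nabla\eta_v^m\|_{L^2(K')}\le Ch^{\bar q}|v|_{H^{\bar q+1}(K')}$.
\item For $w=\nabla\eta_u^m$, the same device applies: the gradient of the $H^1$ projection is a stable projection. This gives $\|\nabla\eta_u^m\|\le Ch^{\bar q+1}|u|_{H^{\bar q+2}}$ and $\|D^2\eta_u^m\|\le Ch^{\bar q}|u|_{H^{\bar q+2}}$.
\end{itemize}
Combining, every trace term is $O(h^{2\bar q+1})$ times the local seminorms. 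Summing over the faces of $K$, including periodic neighbours, gives the sum over $\omega_K$.

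\textbf{Inverse-trace estimate.} The functions $\xi_v^m$ and $\nabla\xi_u^m$ are piecewise polynomials on $K$. The discrete trace inequality $\|p\|_{L^2(\partial K)}^2\le Ch^{-1}\|p\|_{L^2(K)}^2$, obtained by norm equivalence on the reference element plus affine scaling, gives the result directly; the normal component is bounded by the full vector.

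The main obstacle is the second step: bounding the $H^1$/$H^2$ seminorms of the projection errors at the rate $h^{\bar q}$ without a loss. The stability-plus-best-approximation argument above is what I would try first. Quasi-uniformity is needed so that neighbouring $h_{K'}$ are comparable.
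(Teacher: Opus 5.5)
Your proposal is correct and follows the same route as the paper. The volume bounds come from Bramble--Hilbert, via best approximation in $\mathcal P^{\bar q}\subset\mathcal P^{q_v}$ and $\mathcal P^{\bar q+1}\subset\mathcal P^{q_u}$; the flux-trace bounds come from the scaled trace inequality; and the bounds for $\xi_u^m,\xi_v^m$ come from the discrete inverse trace inequality. Your stability-plus-inverse-inequality argument for $\|\nabla\eta_v^m\|_{L^2(K')}$ and $\|D^2\eta_u^m\|_{L^2(K')}$ makes explicit the higher-seminorm bounds that the paper's phrase ``combined with these approximation estimates'' leaves implicit.
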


\begin{proof}
{The volume estimate follows from the standard approximation properties of the elementwise $H^1$ projection and $L^2$ projection, which are consequences of the Bramble--Hilbert lemma (see, e.g., Theorem 4.1.3 of \cite{ciarlet2002finite}). The boundary estimates for the projection errors follow from the scaled trace theorem (see, e.g., Lemma 12.15 of \cite{ern2021finiteI}) combined with these approximation estimates. The final estimate for the discrete errors $\xi_u^m$ and $\xi_v^m$ follows from the inverse trace inequality (see, e.g., Theorem 3.2.6 of \cite{ciarlet2002finite}), since $\xi_u^m$ and $\xi_v^m$ belong to the finite element spaces.}
\end{proof}


We are now ready to present error estimates for the fully discrete energy-based DG scheme \eqref{eq:DG_fully1}--\eqref{eq:DG_fully1_m} with the numerical fluxes \eqref{flux1}.
\begin{truth}\label{them2}
{Assume that $\mathcal T_h$ is a shape-regular and quasi-uniform family of affine simplicial or tensor-product meshes, with $\Omega_h:=\bigcup_{K\in\mathcal T_h}K$ denoting the computational domain. Let
\[
(q_u,q_v)\in\mathcal Q_{\rm adm},
\qquad
\bar q=\min\{q_u-1,q_v\},
\]
where $\mathcal Q_{\rm adm}$ is defined in \eqref{eq:degree_admissible}. Assume further that the variable order $\alpha(t)$ satisfies \eqref{eq:condition_alpha}. Let $(u,v)$ be the exact solution of \eqref{eq:2nd_system} and suppose that
\[
u\in L^{\infty}(0,T;H^{q_u+1}(\mathcal{T}_h))
\cap W^{3,\infty}(0,T;H^1(\mathcal{T}_h))\cap W^{2,\infty}(0,T;H^2(\mathcal{T}_h)),
\]
and
\[
v\in L^\infty(0,T;H^{q_v+1}(\mathcal{T}_h))
\cap W^{3,\infty}(0,T;L^2(\mathcal{T}_h))\cap W^{2,\infty}(0,T;H^1(\mathcal{T}_h)).
\]
Let $(u_h^m, v_h^m)$ be the numerical solution generated by the fully discrete energy-based DG scheme \eqref{eq:DG_fully1}--\eqref{eq:DG_fully1_m}, subject to periodic boundary conditions, the numerical fluxes \eqref{flux1}, and the projected initial data 
\[(u_h^0, v_h^0) = (\tilde{u}_h^0, \tilde{v}_h^0).\] 
Let $(e_u^m, e_v^m)$ be the error defined in \eqref{eq:def_e}. Then, {for sufficiently small time step size $\tau$,} the following error estimate holds 
\begin{equation}\label{eq:error_estimate}
\|\nabla_h e_u^m\|_{L^2(\Omega_h)}^2 + \|e_v^m\|_{L^2(\Omega_h)}^2 \leq C\left(h^{2 r}\tau^4 + h^{2\hat{q}}\right)\qquad \forall m=1,\cdots,M.
\end{equation}
Here, under the general nonnegative flux condition $\gamma,\zeta\ge 0$, one has
\[
\hat q=\bar q,
\qquad
r=-1.
\]
In the strictly dissipative case $\gamma,\zeta>0$, the improved choice
\[
\hat q=\bar q+\tfrac12,
\qquad
r=-\tfrac12
\]
is available. Moreover, the positive constant \(C\) may depend on \(T\), \(q_u\), \(q_v\), \(L_\alpha\), the mesh shape-regularity and quasi-uniformity constants, the bounds of $\alpha(t)$, the fixed mesh-independent numerical flux parameters, and the above spatial and temporal norms of the exact solution, but is independent of the spatial mesh size \(h\), the time step \(\tau\), and the time level \(m\).}
\end{truth}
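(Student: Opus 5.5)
The plan is to split the error as $e_u^m=\xi_u^m-\eta_u^m$ and $e_v^m=\xi_v^m-\eta_v^m$. The projection parts $\eta_u^m,\eta_v^m$ are bounded directly by the volume estimate of Lemma \ref{lemma:projection}, which gives $h^{2\bar q+2}\le h^{2\hat q}$. The main work is to bound the discrete parts $\xi_u^m,\xi_v^m$, and for this I would reuse the energy argument from the proof of Theorem \ref{th:discrete_stability}.

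\textbf{Error equations.} The exact solution satisfies \eqref{DG_scheme1} at $t_{1/2}$ and $t_{m+\sigma_m}$ with exact fluxes. Evaluating there and subtracting the fully discrete scheme \eqref{eq:DG_fully1}--\eqref{eq:DG_fully1_m} shows that $(\xi_u^m,\xi_v^m)$ satisfies the same scheme with $f$ replaced by residuals. These residuals are of three kinds.
\begin{enumerate}
\item Temporal truncation errors. By Lemmas \ref{lemma:fractional_approximation} and \ref{lemma:integer_approximation}, with the stated $W^{3,\infty}$ and $W^{2,\infty}$ regularity, these are of size $O(\tau^2)$, measured in $L^2$ for the $v$-equation and in $H^1$ for the $u$-equation. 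The start-up step contributes only $O(\tau^{2-\alpha_{1/2}})$, but it enters once with weight $\tau$ and $s_0\sim\tau^{\alpha_{1/2}}$, so it is still harmless.
\item Volume projection terms. By the orthogonality \eqref{def:proj}, $\int_K\nabla\phi\cdot\nabla\eta_u=0$. The terms $\int\psi\,\delta_t\eta_v$ and $\int\psi\,a\,\Delta\eta_v$ vanish when $\psi\in V_h^{q_v}$, since the $L^2$ projection commutes with time differencing. The cross term $\int_K\nabla\phi\cdot\nabla\eta_v^{m+\sigma_m}$ does not vanish when $q_v<q_u$; it is the reason $\bar q=\min\{q_u-1,q_v\}$ appears.
\item Face terms. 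These are $(\eta_v)^\ast-\eta_v$ tested against $\nabla\phi\cdot\mathbf n$, and $(\nabla\eta_u)^\ast\cdot\mathbf n$ tested against $\psi$.
\end{enumerate}

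\textbf{Energy identity.} Take $\phi=\xi_u^{m+\sigma_m}$ and $\psi=\xi_v^{m+\sigma_m}$. The flux identity \eqref{eq:aux1} gives the dissipation $-\gamma\|[\xi_v]\|^2-\zeta\|[\nabla\xi_u]\|^2$ on the left-hand side. From here the argument of Theorem \ref{th:discrete_stability} goes through verbatim, using \eqref{eq:A}, \eqref{eq:aux3}, \eqref{eq:aux5}, the cumulative estimate \eqref{eq: aux2} of Lemma \ref{lem:cumulative_weight_variation}, Lemma \ref{lemma:ineq} and Gr\"onwall. Since $\xi^0=0$ for projected initial data, this yields
\[
\|\nabla_h\xi_u^{n}\|^2+\|\xi_v^{n}\|^2\le C\tau\sum_m\big(\text{residual pairing terms}\big).
\]
The temporal residuals bound by Young's inequality, giving $C\tau^4$. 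The cross volume term is handled by Cauchy--Schwarz and Young against $\|\nabla_h\xi_u\|$, giving $h^{2\bar q}$ (or better) plus a Gr\"onwall-absorbable piece.

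\textbf{Face terms: the main obstacle.} This step fixes $\hat q$ and $r$.
\begin{enumerate}
\item General case $\gamma,\zeta\ge0$. Bound each face term by Cauchy--Schwarz on $\partial K$. Use the starred trace estimate of Lemma \ref{lemma:projection} ($h^{2\bar q+1}$) together with the inverse trace estimate for $\xi$ ($h^{-1}\|\xi\|_K^2$). The product is $h^{2\bar q}$ times $\|\xi\|^2$-terms absorbable by Gr\"onwall, which gives $\hat q=\bar q$.
\item Strictly dissipative case $\gamma,\zeta>0$. Rewrite the face terms, after summing over faces with periodicity, so that $\xi$ appears only through the jumps $[\xi_v]$ and $[\nabla\xi_u]$. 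Young's inequality against the dissipation then gives $h^{2\bar q+1}$, i.e.\ $\hat q=\bar q+\tfrac12$.
\end{enumerate}

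The factor $h^{2r}\tau^4$ is where I am least certain. It should come from the temporal truncation residual of the $u$-equation, which is tested by $\nabla\phi\cdot\mathbf n$ on faces. Controlling it needs a trace or inverse estimate, costing $h^{-1}$ in the general case and $h^{-1/2}$ when it can be paired with the $\zeta$-dissipation. Keeping these powers consistent, and checking that the non-monotone variable-order weights only enter through \eqref{eq: aux2}, is the delicate bookkeeping. Finally, the triangle inequality combines the $\xi$ bound with the $\eta$ bound to give \eqref{eq:error_estimate}.
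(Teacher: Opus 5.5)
Your general-case argument ($\gamma,\zeta\ge 0$, $\hat q=\bar q$) is sound. It does not even use $q_v\ge q_u-2$. Your account of the $h^{2r}\tau^4$ factor also matches the paper's trace-inequality bookkeeping.

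The gap is in the strictly dissipative case when $q_v<q_u$, i.e.\ $(q_u,q_v)\in\mathcal Q_1\cup\mathcal Q_2$.

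\textbf{Where your argument fails.} You keep $-\int_K\nabla\xi_u\cdot\nabla\eta_v^{m+\sigma_m}\,d\mathbf x$ as a volume term and bound it by Cauchy--Schwarz. This contributes $\|\nabla_h\eta_v\|_{L^2(\Omega_h)}^2\sim h^{2q_v}$. When $q_v\le q_u-1$ that equals $h^{2\bar q}$, not $h^{2\bar q+1}$.

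Moreover, the face term you list, $-\sum_K\int_{\partial K}\nabla\xi_u\cdot\mathbf n\,\big((\eta_v)^*-\eta_v\big)\,dS$, cannot be rewritten so that $\xi$ enters only through jumps. The starred part gives $[\nabla\xi_u](\eta_v)^*$, which is fine. But $\eta_v$ is two-valued on faces, so the remaining part pairs $[\eta_v]$ with the \emph{average} of $\nabla\xi_u$. The dissipation $\zeta\|[\nabla\xi_u]\|^2$ does not control that average, and an inverse trace inequality again gives only $h^{2q_v}$.

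So as written, your Case 2 yields $\hat q=\bar q$ for these degree pairs. It delivers $\bar q+\tfrac12$ only when $q_v=q_u$.

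\textbf{The missing step.} The paper integrates the cross term by parts elementwise:
\[
-\int_K\nabla\xi_u\cdot\nabla\eta_v\,d\mathbf x=\int_K\Delta\xi_u\,\eta_v\,d\mathbf x-\int_{\partial K}\nabla\xi_u\cdot\mathbf n\,\eta_v\,dS .
\]
\begin{itemize}
\item The boundary piece cancels the $-\eta_v$ in the flux term. What remains is the single-valued face term $-\sum_{F}\int_F[\nabla\xi_u](\eta_v)^*\,dS$.
\item The volume piece vanishes by the $L^2$-orthogonality of $\eta_v$, because $\Delta\xi_u|_K\in V_h^{q_v}|_K$. This is what the lower bound $q_v\ge q_u-2$ in $\mathcal Q_{\rm adm}$ is for, not a surviving cross term.
\item The upper bound $q_v\le q_u$ kills $\int_K\nabla\xi_v\cdot\nabla\eta_u\,d\mathbf x$, a term you did not list.
\end{itemize}
After this, every projection contribution has the form $[\nabla\xi_u](\eta_v)^*$ or $[\xi_v]\cdot(\nabla\eta_u)^*$. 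Then $\bar q$ enters only through the starred-trace bound $h^{2\bar q+1}$ of Lemma~\ref{lemma:projection}; the fluxes mix $\eta_v$ with $\zeta[\nabla\eta_u]$ and $\nabla\eta_u$ with $\gamma[\eta_v]$. Young's inequality against $\gamma,\zeta>0$ then gives $h^{2\bar q+1}$.

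\textbf{A related point.} The same integration by parts is needed for the truncation term $\int_K\nabla\xi_v\cdot\nabla E_{11}^{m+\sigma_m}\,d\mathbf x$ in the $v$-equation. Your sketch treats it as an $L^2$ pairing with $\xi_v$, but it is not one, since $\nabla\xi_v$ is not controlled by the energy. Integrating by parts produces $\int_K\xi_v\,\Delta E_{11}^{m+\sigma_m}\,d\mathbf x$ plus a face term. This is why $u\in W^{2,\infty}(0,T;H^2)$ is assumed.
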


\begin{proof}
Similar to the stability analysis in the previous section, we proceed with an estimate for the case of $1\leq m\leq M-1$ first (\underline{\textit{step one}}), and then for the case of $m=0$ (\underline{\textit{step two}}).

\textbf{Step one:} $1\leq m\leq M-1$.

Since both the continuous solution $(u({\bf x}, t_m),v({\bf x}, t_m))$ and the DG solution $(u_h^m,v_h^m)$ with $m\geq 1$ satisfy the fully discrete scheme \eqref{eq:DG_fully1_m}, we have
\begin{equation}\label{eq:aux10}
\begin{aligned}
&\int_K \nabla \phi \cdot \nabla \left(\delta_t e_u^{m+\sigma_m} - e_v^{m+\sigma_m}\right)\ d{\bf x} \\
= &\int_{\partial K} \nabla \phi \cdot {\bf n} \big( (e_v^{m+\sigma_m})^\ast - e_v^{m+\sigma_m} \big) dS +  \int_K -\nabla \phi \cdot \nabla E_7^{m+\sigma_m} + \nabla \phi \cdot \nabla E_8^{m+\sigma_m}\ d{\bf x} \\
+ & \int_{\partial K} \nabla \phi \cdot{\bf n} (-E_8^{m+\sigma_m}+E_{9}^{m+\sigma_m}) \ dS,
\end{aligned}
\end{equation}
and
\begin{equation}\label{eq:aux10_1}
\begin{aligned}
&\int_K \psi \delta_t e_v^{m+\sigma_m} + \nabla\psi \cdot \nabla e_u^{m+\sigma_m} + \psi \sum_{i=0}^m  {a_{m-i}^{(m)}} \big(e_v^{i+1} - e_v^i\big) \ d {\bf x}\\
= &\int_{\partial K} \psi (\nabla e_u^{m+\sigma_m})^\ast\cdot {\bf n} \ dS - \int_K \psi E_{10}^{m+\sigma_m} + \nabla \psi \cdot \nabla E_{11}^{m+\sigma_m} + \psi E_{12}^{m+\sigma_m} \ d{\bf x} \\
+ & \int_{\partial K} \psi E_{13}^{m+\sigma_m}\cdot{\bf n}\ dS,
\end{aligned}
\end{equation}
where $\{E_i^{m+\sigma_m}\}_{i=7}^{13}$ denote the local truncation errors:
\begin{equation*}\label{eq:local1}
\begin{aligned}
E_7^{m+\sigma_m} &:= \frac{\partial u(\cdot, t_{m+\sigma_m})}{\partial t} - \delta_t u^{m+\sigma_m}, \quad\quad E_8^{m+\sigma_m} := v(\cdot, t_{m+\sigma}) - v^{m+\sigma_m},\\
E_{9}^{m+\sigma_m} &:= v^\ast(\cdot, t_{m+\sigma_m}) - (v^{m+\sigma_m})^\ast,\quad\quad E_{10}^{m+\sigma_m} := \frac{\partial v(\cdot, t_{m+\sigma_m})}{\partial t} - \delta_t v^{m+\sigma_m},\\
E_{11}^{m+\sigma_m} &:= u(\cdot, t_{m+\sigma_m}) - u^{m+\sigma_m}, \quad\quad\quad\ \  E_{13}^{m+\sigma_m} := \big(\nabla u(\cdot, t_{m+\sigma_m})\big)^\ast - (\nabla u^{m+\sigma_m})^\ast,
\end{aligned}
\end{equation*}
and
\begin{equation*}\label{eq:local2}
\begin{aligned}
E_{12}^{m+\sigma_m} &:= {}_0^C\mathcal{D}_t^{\alpha(t)} v(\cdot, t_{m+\alpha_m}) - \! \sum_{i=0}^m {a_{m-i}^{(m)}}\big(v^{i+1} - v^i\big).
\end{aligned}
\end{equation*}

Substituting $\phi$ by $\xi_u^{m+\sigma_m}$ in \eqref{eq:aux10} and $\psi$ by $\xi_v^{m+\sigma_m}$ in \eqref{eq:aux10_1}, then summing the resulting equations over all elements $K$ and invoking the relations \[e_u^{m+\sigma_m} = \xi_u^{m+\sigma_m} - \eta_u^{m+\sigma_m},\quad e_v^{m+\sigma_m} = \xi_v^{m+\sigma_m} - \eta_v^{m+\sigma_m},\] 
we generate
\begin{equation}\label{eq:aux11}
\begin{aligned}
&\sum_{K\in\mathcal{T}_h}  \int_K \nabla \xi_u^{m+\sigma_m} \cdot \delta_t \nabla \xi_u^{m+\sigma_m} + \xi_v^{m+\sigma_m} \delta_t \xi_v^{m+\sigma_m}  + \xi_v^{m+\sigma_m} \sum_{i=0}^m {a_{m-i}^{(m)}}\big(e_v^{i+1} - e_v^i\big)d{\bf x}\\
= & \!\sum_{K\in\mathcal{T}_h}  \!\!\int_K \!\!\nabla \xi_u^{m+\sigma_m}\! \!\cdot\! \delta_t\nabla  \eta_u^{m+\sigma_m} \!\!+ \xi_v^{m+\sigma_m} \delta_t \eta_v^{m+\sigma_m} \!\!- \!\!\nabla \xi_u^{m+\sigma_m} {\cdot}\nabla \eta_v^{m+\sigma_m} \!\!+ \!\!\nabla \xi_v^{m+\sigma_m} \!\cdot\! \nabla \eta_u^{m+\sigma_m}  \\
  + & \!\sum_{K\in\mathcal{T}_h} \!\int_{\partial K} \!\!\!\!\nabla \xi_u^{m+\sigma_m}\cdot {\bf n} \big( (\xi_v^{m+\sigma_m})^\ast \!\!-\! \xi_v^{m+\sigma_m} \big) \!+ \xi_v^{m+\sigma_m} (\nabla \xi_u^{m+\sigma_m})^\ast\cdot {\bf n} \ dS\\
 - & \!\sum_{K\in\mathcal{T}_h} \!\int_{\partial K} \!\!\nabla \xi_u^{m+\sigma_m}\cdot {\bf n} \big( (\eta_v^{m+\sigma_m})^\ast - \eta_v^{m+\sigma_m} \big) + \xi_v^{m+\sigma_m} (\nabla \eta_u^{m+\sigma_m})^\ast\cdot {\bf n} \ dS + \mathcal{L}_1 + \mathcal{L}_2,
\end{aligned}
\end{equation}
where $\mathcal{L}_1$ and $\mathcal{L}_2$ are terms related to local truncation errors $\{E_i^{m+\sigma_m}\}_{i=7}^{13}$ are given as follows
\[\begin{aligned}
\mathcal{L}_1 &:=  \sum_{K\in\mathcal{T}_h} \!\! \int_K \!\!\nabla \xi_u^{m+\sigma_m} \!\cdot\! \nabla (E_8^{m+\sigma_m}\!\!-\!E_7^{m+\sigma_m} ) \!-\! \xi_v^{m+\sigma_m} (E_{10}^{m+\sigma_m}\!+\!E_{12}^{m+\sigma_m}) \ d{\bf x} \\
& + \sum_{K\in\mathcal{T}_h}\int_{\partial K} \!\nabla \xi_u^{m+\sigma_m} \!\!\cdot{\bf n} (E_{9}^{m+\sigma_m}\!\!-E_8^{m+\sigma_m})\ dS \\
\mathcal{L}_2 &:= \sum_{K\in\mathcal{T}_h} \!\int_K \!-\! \nabla \xi_v^{m+\sigma_m} \!\cdot\! \nabla E_{11}^{m+\sigma_m} d{\bf x} + \!\!\int_{\partial K} \!\!\xi_v^{m+\sigma_m} E_{13}^{m+\sigma_m} \!\!\cdot{\bf n} \ dS.
\end{aligned}\]
Taking an integration by parts for the volume integral involving $\nabla \eta_v^{m+\sigma_m}$ in the second line of \eqref{eq:aux11}, and volume integral involving $\nabla E_{11}^{m+\sigma_m}$ in $\mathcal{L}_2$, and using the definition of the fluxes in \eqref{flux1} together with the projection given in \eqref{def:proj}, we obtain
\begin{equation*}
\sum_{K\in\mathcal{T}_h }  \int_K \nabla \xi_u^{m+\sigma_m} \cdot \delta_t \nabla \xi_u^{m+\sigma_m} + \xi_v^{m+\sigma_m} \delta_t \xi_v^{m+\sigma_m} + \xi_v^{m+\sigma_m} \sum_{i=0}^m {a_{m-i}^{(m)}}  \big(\xi_v^{i+1} - \xi_v^i\big) \ d {\bf x} = \frak{R}^{m+\sigma_m},
\end{equation*}
where the right-hand-side 
\begin{equation*}\label{eq:R}
\begin{aligned}
&\frak{R}^{m+\sigma_m} \\
= &  - \!\!\sum_{F \in \mathcal{F}_h}\!\!\int_{F}[\nabla \xi_u^{m+\sigma_m}](\eta_v^{m+\sigma_m})^\ast  \!+\! [\xi_v^{m+\sigma_m}]\!\cdot\! (\nabla \eta_u^{m+\sigma_m})^\ast \!+\! \gamma \big|[\xi_v^{m+\sigma_m}]\big|^2 \!+\! \zeta [\nabla \xi_u^{m+\sigma_m}]^2 dS \\
& + \!\!\sum_{K\in\mathcal{T}_h} \!\! \int_{\partial K} \!\!\xi_v^{m+\sigma_m} (E_{13}^{m+\sigma_m} - \nabla E_{11}^{m+\sigma_m}) \cdot{\bf n} \ dS + \int_K \xi_v^{m+\sigma_m} \Delta E_{11}^{m+\sigma_m}\ d{\bf x} + \mathcal{L}_1.
\end{aligned}
\end{equation*}
Here, again, \(\mathcal F_h\) denote the set of all interelement faces, with periodic boundary faces identified according to the periodic boundary condition. Then, using the same analysis as in the derivation of Theorem \ref{th:discrete_stability}, we find that
{\begin{equation}\label{eq:aux12_1}
\begin{aligned}
& \mathcal{A}(\xi_v^{m+1}) + \mathcal{A}(\nabla_h \xi_u^{m+1}) + 2\tau\sum_{i=2}^{m+1} a_{m-i+1}^{(m)} \|\xi_v^i\|_{L^2(\Omega_h)}^2 \\
\leq &\ C\Big(\mathcal{A}(\xi_v^1) + \mathcal{A}(\nabla_h \xi_u^1) + \left\|\xi_v^{1}\right\|_{L^2(\Omega_h)}^2+\left\|\xi_v^0\right\|_{L^2(\Omega_h)}^2 + \tau \sum_{i = 1}^{m} \frak{R}^{i+\sigma_i} \Big),
\end{aligned}
\end{equation}}
where $\mathcal{A}(\cdot)$ is defined in \eqref{eq:A}, and  $C$ is a positive constant independent of the spatial mesh size \(h\), the time step size \(\tau\), and the time level \(m\). To complete the estimates, we divide our arguments into two cases: both fluxes parameters $\gamma$ and $\zeta$ in \eqref{flux1} are positive, or at least one of them is zero.

\textit{\textbf{Case 1.}} When the mesh independent flux parameters $\gamma = 0$ or $\zeta = 0$, we use Cauchy--Swartz inequality, Young's inequality, Lemma \ref{lemma:fractional_approximation}, Lemma \ref{lemma:integer_approximation} and Lemma \ref{lemma:projection} to obtain
\begin{equation}\label{eq:aux13}
\tau \sum_{i = 1}^{m} \frak{R}^{i+\sigma_i}
\leq \frac{\tau}{2}\sum_{i = 1}^{m}\Big(  \|\nabla_h \xi_u^{i+\sigma_i}\|_{L^2(\Omega_h)}^2 +\|\xi_v^{i+\sigma_i}\|_{L^2(\Omega_h)}^2 \Big) +C\tau \sum_{i = 1}^{m}\Big( h^{2\bar{q}} + \tau^4 + h^{-2}\tau^4\Big),
\end{equation}
where the scaling $h^{-2}$ comes from the application of the trace inequality. Plugging \eqref{eq:aux13} into \eqref{eq:aux12_1}, and using \eqref{eq:A} together with the fact that $a_{m-i+1}^{(m)} > 0$, we arrive at
\begin{equation}\label{eq:errorm_case1}
\begin{aligned}
\|\xi_v^{m+1}\|_{L^2(\Omega_h)}^2 &\ + \|\nabla_h \xi_u^{m+1}\|_{L^2(\Omega_h)}^2 
\leq  \frac{\tau}{2}\sum_{i = 1}^{m} \Big( \|\nabla \xi_u^{i+\sigma_i}\|_{L^2(\Omega_h)}^2 +\|\xi_v^{i+\sigma_i}\|_{L^2(\Omega_h)}^2 \Big)\\
&+ Ch^{2\bar{q}} + C(\tau^4 + h^{-2}\tau^4) + C\Big(\|\nabla_h \xi_u^1\|_{L^2(\Omega_h)}^2 + \| \xi_v^1\|_{L^2(\Omega_h)}^2\Big).
\end{aligned}
\end{equation}

\textit{\textbf{Case 2.}} When the mesh independent flux parameters $\gamma, \zeta > 0$, one can improve the estimate by abosorbing the boundary terms with flux-dissipating terms. Thanks to Young's inequality, Lemma \ref{lemma:fractional_approximation}, Lemma \ref{lemma:integer_approximation} and Lemma \ref{lemma:projection}, one has
\begin{equation}\label{eq:aux14}
\tau \sum_{i = 1}^{m} \frak{R}^{i+\sigma_i}
\leq  \frac{ \tau}{2}\sum_{i = 1}^{m}\Big(  \|\nabla_h \xi_u^{i+\sigma_i}\|_{L^2(\Omega_h)}^2 +\|\xi_v^{i+\sigma_i}\|_{L^2(\Omega_h)}^2 \Big) + C\tau\sum_{i=1}^m  \Big(h^{2\bar{q}+1} + \tau^4 + h^{-1}\tau^4 \Big).
\end{equation}
Substituting \eqref{eq:aux14} into \eqref{eq:aux12_1} yields
\begin{equation}\label{eq:errorm_case2}
\begin{aligned}
\|\xi_v^{m+1}\|_{L^2(\Omega_h)}^2 &\ + \|\nabla_h \xi_u^{m+1}\|_{L^2(\Omega_h)}^2 
\leq  \frac{\tau}{2}\sum_{i = 1}^{m} \Big( \|\nabla \xi_u^{i+\sigma_i}\|_{L^2(\Omega_h)}^2 +\|\xi_v^{i+\sigma_i}\|_{L^2(\Omega_h)}^2 \Big)\\
&+ Ch^{2\bar{q}+1} + C(\tau^4 + h^{-1}\tau^4) + C\Big(\|\nabla_h \xi_u^1\|_{L^2(\Omega_h)}^2 + \| \xi_v^1\|_{L^2(\Omega_h)}^2\Big).
\end{aligned}
\end{equation}
We are now left to estimate $\|\nabla_h \xi_u^1\|_{L^2(\Omega_h)}^2  + \left\|\xi_v^{1}\right\|_{L^2(\Omega_h)}^2$.

\textbf{Step two:} $m = 0$.
 
 To proceed, we derive the error equation of the fully discretized scheme \eqref{eq:DG_fully1} as follows
\begin{equation}\label{eq:aux15}
\begin{aligned}
&\int_K \nabla \phi \cdot \nabla \left(\frac{e_u^1 - e_u^0}{\tau} - e_v^{1/2}\right) \ d{\bf x} \\
= &\int_{\partial K} \!\!\!\!\nabla \phi \cdot{\bf n} \left((e_v^{1/2})^\ast \!-\!e_v^{1/2}\right)dS + \int_K \nabla \phi \cdot \nabla (-E_0 + E_1)\ d{\bf x} + \int_{\partial K} \!\!\!\nabla \phi \cdot {\bf n} (-E_1 + E_2) \ dS,\\
&\int_K \psi \frac{e_v^1 - e_v^0}{\tau} + \nabla \psi \cdot \nabla e_u^{1/2} + \psi \frac{e_v^1 - e_v^0}{s_0} \ d{\bf x} \\
= &\int_{\partial K} \psi (\nabla e_u^{1/2})^\ast \cdot {\bf n} dS - \int_K \psi E_3 + \nabla \psi\cdot \nabla E_4 + \psi E_5 \ d{\bf x} + \int_{\partial K} \psi E_6 \cdot{\bf n} \ dS,
\end{aligned}
\end{equation}
where local truncation errors $\{E_i\}_{i=0}^6$ are given by:
\begin{equation*}
\begin{aligned}
E_0 &:=\frac{\partial  u(\cdot, t_{1/2})}{\partial t} - \frac{u^1 - u^0}{\tau}, \quad\quad\quad\quad\quad \ E_1 := v(\cdot, t_{1/2}) - \frac{ v^1 +  v^0}{2},\\
E_2 &:=  v^\ast(\cdot, t_{1/2}) - \frac{(v^1)^\ast + (v^0)^\ast}{2}, \quad\quad\quad \ E_3 :=  \frac{\partial v(\cdot, t_{1/2})}{\partial t} - \frac{v^1 - v^0}{\tau},\\
E_4 &:= u(\cdot, t_{1/2}) - \frac{u^1 + u^0}{2},\quad\quad\quad\quad\quad\quad E_5 := {}_0^C\mathcal{D}_t^{\alpha(t)} v(\cdot, t_{1/2}) - \frac{v^1 - v^0}{s_0},\\
\end{aligned}
\end{equation*}
and
\begin{equation*}
\begin{aligned}
E_6 &:=  \big(\nabla u(\cdot, t_{1/2})\big)^\ast - \frac{(\nabla u^1)^\ast + (\nabla u^0)^\ast}{2}.
\end{aligned}
\end{equation*}

Similar to the case where $1\leq m\leq M-1$, we choose $\phi = \xi_u^{1/2}$ and $\psi = \xi_v^{1/2}$ in \eqref{eq:aux15}, add the resulting equations, and sum over all elements $K$ to generate
\begin{equation}\label{eq:aux16}
\begin{aligned}
&\sum_{K\in\mathcal{T}_h}  \int_K \frac{1}{\tau}\nabla \xi_u^{1/2} \cdot \nabla (\xi_u^1 - \xi_u^0) - \frac{1}{\tau}\nabla  \xi_u^{1/2} \cdot \nabla (\eta_u^1 - \eta_u^0) + \nabla \xi_u^{1/2}\cdot \nabla \eta_v^{1/2} \ d{\bf x}\\
&+\sum_{K\in\mathcal{T}_h}  \int_K\left(\frac{1}{\tau}+\frac{1}{s_0}\right) \xi_v^{1/2}(\xi_v^{1} - \xi_v^0) -\left(\frac{1}{\tau} +\frac{1}{s_0}\right)\xi_v^{1/2}(\eta_v^1 - \eta_v^0) - \nabla \xi_v^{1/2}\cdot\nabla \eta_u^{1/2} \ d{\bf x} \\
= & \sum_{K\in\mathcal{T}_h} \int_{\partial K} \nabla \xi_u^{1/2} \cdot{\bf n} \left((\xi_v^{1/2})^\ast -\xi_v^{1/2}\right) + \xi_v^{1/2} (\nabla \xi_u^{1/2})^\ast \cdot{\bf n}\ dS\\
 - &\sum_{K\in\mathcal{T}_h} \int_{\partial K} \nabla \xi_u^{1/2} \cdot{\bf n} \left((\eta_v^{1/2})^\ast -\eta_v^{1/2}\right) + \xi_v^{1/2} (\nabla \eta_u^{1/2})^\ast \cdot{\bf n} \ dS +\bar{\mathcal{L}}_1 + \bar{\mathcal{L}}_2
\end{aligned}
\end{equation}
where we have used the relation $e_u^m = \xi_u^m - \eta_u^m$ and $e_v^m = \xi_v^m - \eta_v^m, m=0,1$, and
\[\begin{aligned}
\bar{\mathcal{L}}_1 &:=  \sum_{K\in\mathcal{T}_h}  \int_K \nabla \xi_u^{1/2} \cdot \nabla (E_1-E_0) - \xi_v^{1/2} (E_3+ E_5 )\ d{\bf x} + \int_{\partial K} \!\!\!\nabla \xi_u^{1/2} \cdot {\bf n} (E_2-E_1) \ dS, \\
\bar{\mathcal{L}}_2 &:= \sum_{K\in\mathcal{T}_h}  \int_K -\nabla \xi_v^{1/2}\cdot \nabla E_4 \ d{\bf x} + \int_{\partial K} \xi_v^{1/2} E_6 \cdot{\bf n} \ dS.
\end{aligned}\]
Taking an integration by parts for the volume integral involving $\nabla \xi_v^{1/2}$ in the second line of \eqref{eq:aux16}, and the volume integral involving $\nabla E_4$ in $\bar{\mathcal{L}}_2$, we then simplify \eqref{eq:aux16}, by using the projection defined in \eqref{def:proj}, to
\begin{equation}\label{eq:aux30}
\begin{aligned}
&\sum_{K\in\mathcal{T}_h}  \int_K \frac{1}{\tau}\nabla \xi_u^{1} \cdot \nabla \xi_u^1 + \left(\frac{1}{\tau}+\frac{1}{s_0}\right) (\xi_v^1)^2\ d{\bf x} \\
= & \sum_{F\in\mathcal{F}_h}\int_F \!\!-\zeta |[\nabla \xi_u^1]|^2 \!-\! \gamma |[\xi_v^1]|^2 \!-\! [\nabla \xi_u^1] (\eta_v^1)^\ast \!-\! [\xi_v^1] \cdot (\nabla \eta_u^1)^\ast \!-\! [\nabla \xi_u^1] (\eta_v^0)^\ast \!-\! [\xi_v^1] \cdot (\nabla \eta_u^0)^\ast  \ dS\\
  + & \sum_{K\in\mathcal{T}_h}  \int_K  \xi_v^{1/2}\Delta E_4 \ d{\bf x} + \int_{\partial K} \xi_v^{1/2} E_6 \cdot{\bf n}  - \xi_v^{1/2}\nabla E_4\cdot{\bf n}\ dS + \bar{\mathcal{L}}_1.
\end{aligned}
\end{equation}
Analogue to the case of $1\leq m\leq M-1$, we divide our discussion into two cases: 

\textit{\textbf{Case 1.}} When $\gamma=0$ or $\zeta=0$, invoking the Cauchy-Schwarz inequality, the Young's inequality, and Lemma \ref{lemma:projection}, \eqref{eq:aux30} yields
\begin{equation}\label{eq:error1_case1}
\begin{aligned}
&\|\nabla_h \xi_u^1\|_{L^2(\Omega_h)}^2 + \| \xi_v^1\|_{L^2(\Omega_h)}^2 + \frac{\tau}{s_0} \| \xi_v^1\|_{L^2(\Omega_h)}^2\\
\leq & \ \frac{1}{2} \Big(\|\nabla_h \xi_u^1\|_{L^2(\Omega_h)}^2 + \| \xi_v^1\|_{L^2(\Omega_h)}^2 \Big) + C\left(\tau^2h^{2\bar{q}} + \tau^{6-2\alpha_{1/2}} + h^{-2}\tau^{6}\right).
\end{aligned}
\end{equation}

\textit{\textbf{Case 2.}} When $\gamma, \zeta > 0$, one can further find that
\begin{equation}\label{eq:error1_case2}
\begin{aligned}
&\|\nabla_h \xi_u^1\|_{L^2(\Omega_h)}^2 + \| \xi_v^1\|_{L^2(\Omega_h)}^2 + \frac{\tau}{s_0} \| \xi_v^1\|_{L^2(\Omega_h)}^2\\
\leq & \ \frac{1}{2} \Big(\|\nabla_h \xi_u^1\|_{L^2(\Omega_h)}^2 + \| \xi_v^1\|_{L^2(\Omega_h)}^2 \Big)+C\left(\tau^2 h^{2\bar{q}+1} + \tau^{6-2\alpha_{1/2}} + h^{-1}\tau^6\right).
\end{aligned}
\end{equation}

Finally, combining \eqref{eq:errorm_case1} with \eqref{eq:error1_case1}, and \eqref{eq:errorm_case2} with \eqref{eq:error1_case2}, using the fact that 
\[\|\nabla_h \xi_u^{i+\sigma_i}\|_{L^2(\Omega_h)}^2 \leq \|\nabla_h \xi_u^{i+1}\|_{L^2(\Omega_h)}^2 + \|\nabla \xi_u^{i}\|_{L^2(\Omega_h)}^2, \quad \|\xi_v^{i+\sigma_i}\|_{L^2(\Omega_h)}^2 \leq \|\xi_v^{i+1}\|_{L^2(\Omega_h)}^2 + \|\xi_v^{i}\|_{L^2(\Omega_h)}^2,\]
and discrete Gr\"ownwall's inequality together with sufficiently small $\tau$, we get \eqref{eq:error_estimate}.

\end{proof}

\begin{remark}\label{remark:optimal}
Building on the discussion in \cite{appelo2015new}, an optimal error estimate of \(\mathcal{O}(h^{q}, h^r\tau^2)\) can be obtained when \(q_u = q\) and \(q_v = q-1\) for both one-dimensional and multidimensional problems with Cartesian elements. This is achieved by constructing \((\tilde{u}_h^m, \tilde{v}_h^m)\) such that the boundary terms involving \((\eta_v^{m+\sigma_m})^\ast\) and \((\nabla\eta_u^{m+\sigma_m})^\ast\) in \eqref{eq:R} vanish, provided that the flux parameters in \eqref{flux1} satisfy \(\theta(1 - \theta) = \gamma\zeta\). Furthermore, \(r = -1/2\) if both \(\gamma\) and \(\zeta\) are positive, whereas \(r = -1\) if at least one of them is zero.  
\end{remark}

The reduction in the temporal error estimate to \(\mathcal{O}(h^r\tau^2)\) appears to be related to the estimation technique used in the analysis. In the smooth manufactured-solution tests reported in Section \ref{sec:numerical}, the observed temporal convergence remains second order even when $h \approx \tau$ and $\gamma\zeta =0$. A sharper theoretical analysis that removes this technical loss will be pursued in future work.

{Another important issue is the regularity of the exact solution near the initial time. Solutions of time-fractional evolution equations with Caputo derivatives often exhibit weak initial singularities, whereas Theorem \ref{them2} is proved under smooth-in-time regularity assumptions. The present analysis therefore does not fully address the reduced temporal regularity that may occur near \(t=0\). To illustrate this issue numerically, Section \ref{sec:numerical} includes an additional test with a weak initial singularity. This experiment is performed on uniform temporal meshes, consistent with the fully discrete scheme analyzed in this paper, and is intended to document the effect of reduced regularity on the observed convergence behavior. A systematic treatment using nonuniform or graded temporal meshes is left for future work.}

\section{Numerical Simulations}\label{sec:numerical}

In this section, we conduct numerical experiments to validate the stability and convergence of the fully discrete energy-based DG scheme introduced in Section \ref{sec:full_discrete}. Utilizing a standard modal basis formulation, we set the approximation degrees \(q_u = q\) for \(u\) and \(q_v = q-1\) for \(v\) to ensure optimal convergence, as outlined in Remark \ref{remark:optimal}. Numerical errors in $u$ at time $t_m = m\tau$ are reported in the \(L^2\) norm, computed as follows:
\begin{equation*}\label{eq:errors}
E_u(t_m;h,\tau) := \left\| u({\bf x}, t_m) - u_h^m({\bf x})\right\|_{L^2(\Omega_h)} = \Big(\sum_{K\in \mathcal{T}_h} \sum_{j=0}^q w_j^K\big(u({\bf x}_j^K, t_m) - u_h^m({\bf x}_j^K)\big)^2 \Big)^{1/2},
\end{equation*}
where ${\bf x}_j^K$ denotes the Gauss quadrature nodes in element $K$, and $w_j^K$ are the corresponding weights.
Additionally, the root of the nonlinear function \eqref{eq:F} is determined using the Newton method with an initial guess of \( \sigma = \frac{3}{4} \) and a tolerance of \( 10^{-15} \).  

\subsection{Example in 1D}
Consider the variable-order time-fractional wave equation:
\begin{equation}\label{eq:prob1}
 \frac{\partial^2 u(x, t)}{\partial t^2}+{}_0^C \mathcal{D}_t^{\alpha(t)+1} u(x, t)
 =\frac{\partial^2 u(x, t)}{\partial x^2}+f(x, t), \quad(x, t) \in(0, 2\pi) \times(0,1),
\end{equation}
where $\alpha(t)$ satisfies the condition \eqref{eq:condition_alpha}. The exact solution is chosen in the seperated form
\begin{equation}\label{eq:sol_form}
u(x, t) = G(t)\Phi(x).
\end{equation}
The source term $f(x,t)$, together with the initial and boundary data, is determined by substituting \eqref{eq:sol_form} into \eqref{eq:prob1}. Periodic boundary conditions are imposed on $(0,2\pi)$.

\paragraph{Convergence test.} To investigate the convergence behavior of the proposed scheme
\eqref{eq:DG_fully1}--\eqref{eq:DG_fully1_m}, we first consider a sufficiently
regular manufactured solution of the form \eqref{eq:sol_form}, with
{\begin{equation}\label{eq:Phi}
G(t)=t^2+t^{7/2}+\frac12 t^4,
\qquad
\Phi(x)=\Big(1+\frac14\cos x+\frac15\sin 2x\Big)\sin x.
\end{equation}}
The corresponding source term $f(x,t)$ is computed analytically from
\eqref{eq:prob1}. Since $G(0)=G'(0)=0$, the associated initial conditions are $u(x,0)=0, u_t(x,0)=0$. The spatial interval is uniformly partitioned with nodes $x_j=jh$, $j=0,\ldots,N$, where $h=2\pi/N$. Similarly, the temporal interval is uniformly partitioned with nodes $t_m=m\tau$, $m=0,\ldots,M$, where $\tau=1/M$. In this test, we take the flux parameters $\theta=\gamma=\zeta=0$. The polynomial degrees for the approximation spaces of $u_h$ and $v_h$ are chosen as $q_u = q\in\{1,2,3,4\}$ and $q_v = q-1$, respectively. We consider the following variable orders:
\begin{equation}\label{eq:choice_of_beta}
{\alpha(t)\in
\big\{
0.1+0.8e^{-t},\quad 0.9-0.5t^2,\quad (2+\sin t)/4
\big\}.}
\end{equation}

Table \ref{tab:convergence_1d_space_u_al} reports the spatial convergence results. In this test, we vary the number of elements as $N=10,20,40,80$ and fix the number of time steps at $M=5\times 10^4$, so that the temporal discretization error is negligible compared with the spatial discretization error. The results show that the proposed scheme achieves robust and optimal $(q+1)$-st order accuracy for $u$ for all tested variable orders $\alpha(t)$ in \eqref{eq:choice_of_beta}.

\begin{table}[h!]
 	\footnotesize
 \begin{center}
 		\scalebox{1.0}{
 		\begin{tabular}{c| c c c c c c c c c c c}
 				\hline
 				 ~ & ~ & $\alpha(t)$ & $\vert$ &$0.1+0.8e^{-t}$ & ~ & ~ & $0.9-0.5t^2$ & ~ &  ~& $(2+\sin t)/4$ & ~ \\
 				\cline{2-3} \cline{5-6} \cline{8-9} \cline{11-12} 
      ~ & $q$ & $N$ & $\vert$ & $E_u(h,\tau)$ & $\text{order}_h$ & ~ & $E_u(h,\tau)$  & $\text{order}_h$ & ~ & $E_u(h,\tau)$  & $\text{order}_h$  \\
 				\cline{2-12}
 ~ & \cellcolor{gray!15} ~ & 10 & $\vert$ & 3.38e-01& \cellcolor{gray!15}--  & ~ & 3.38e-01& \cellcolor{gray!15}--   & ~ & 3.38e-01& \cellcolor{gray!15}--  \\
 ~ & \cellcolor{gray!15} ~ & 20 & $\vert$ & 8.96e-02& \cellcolor{gray!15}1.92& ~ & 8.95e-02& \cellcolor{gray!15}1.92 & ~ &8.95e-02 & \cellcolor{gray!15}1.92  \\
 ~ & \cellcolor{gray!15}1  & 40 & $\vert$ & 2.27e-02& \cellcolor{gray!15}1.98& ~ & 2.27e-02& \cellcolor{gray!15}1.98 & ~ &2.27e-02  & \cellcolor{gray!15}1.98 \\
 ~ & \cellcolor{gray!15} ~ & 80 & $\vert$ & 5.70e-03& \cellcolor{gray!15}1.99& ~ & 5.70e-03& \cellcolor{gray!15}1.99 & ~ &5.70e-03  & \cellcolor{gray!15}1.99  \\
 				   ~ & ~ & ~  & ~ & ~ & ~ & ~ & ~ & ~ & ~ & ~ & ~ \\
 ~ & \cellcolor{cyan!5} ~  & 10  & $\vert$ & 4.46e-02& \cellcolor{cyan!5}--  & ~ & 4.46e-02& \cellcolor{cyan!5}--   & ~   &4.39e-02 & \cellcolor{cyan!5}--  \\
 ~ & \cellcolor{cyan!5} ~  & 20 & $\vert$ & 6.39e-03& \cellcolor{cyan!5}2.80 & ~ & 6.40e-03& \cellcolor{cyan!5}2.80 & ~ &6.37e-03 & \cellcolor{cyan!5}2.78  \\
 $u$ & \cellcolor{cyan!5}2   & 40 & $\vert$ & 8.27e-04& \cellcolor{cyan!5}2.95& ~ & 8.27e-04& \cellcolor{cyan!5}2.95 & ~ &8.26e-04  & \cellcolor{cyan!5}2.94 \\
 ~ & \cellcolor{cyan!5} ~  & 80 & $\vert$ & 1.04e-04& \cellcolor{cyan!5}2.99& ~ & 1.04e-04& \cellcolor{cyan!5}2.99 & ~ &1.04e-04  & \cellcolor{cyan!5}2.99  \\
 				~ & ~ & ~ & ~  & ~ & ~ & ~ & ~ & ~ & ~ & ~ & ~\\
 ~ & \cellcolor{blue!5} ~  & 10  & $\vert$ & 3.06e-03&  \cellcolor{blue!5}--  & ~ & 3.04e-03& \cellcolor{blue!5}--  & ~  &3.05e-03 & \cellcolor{blue!5}--  \\
 ~ & \cellcolor{blue!5} ~  & 20 & $\vert$ & 2.18e-04&  \cellcolor{blue!5}3.81& ~ & 2.18e-04& \cellcolor{blue!5}3.80 & ~ &2.17e-04 & \cellcolor{blue!5}3.81  \\
 ~ & \cellcolor{blue!5}3   & 40 & $\vert$ & 1.42e-05&  \cellcolor{blue!5}3.94& ~ & 1.42e-05& \cellcolor{blue!5}3.94 & ~ &1.42e-05  & \cellcolor{blue!5}3.94 \\
 ~ & \cellcolor{blue!5} ~  & 80 & $\vert$ & 8.95e-07&  \cellcolor{blue!5}3.98& ~ & 8.94e-07& \cellcolor{blue!5}3.98 & ~ &8.94e-07  & \cellcolor{blue!5}3.98 \\
 ~ & ~ & ~ & ~  & ~ & ~ & ~ & ~ & ~ & ~ & ~ & ~\\
 ~ & \cellcolor{magenta!5} ~  & 10  & $\vert$ & 2.78e-04&  \cellcolor{magenta!5}--  & ~ & 2.79e-04& \cellcolor{magenta!5}--  & ~  &2.80e-04 & \cellcolor{magenta!5}--  \\
 ~ & \cellcolor{magenta!5} ~  & 20 & $\vert$ & 8.88e-06&  \cellcolor{magenta!5}4.97& ~ & 8.87e-06& \cellcolor{magenta!5}4.98 & ~ &8.89e-06 & \cellcolor{magenta!5}4.98  \\
 ~ & \cellcolor{magenta!5}4   & 40 & $\vert$ & 2.79e-07&  \cellcolor{magenta!5}4.99& ~ & 2.79e-07& \cellcolor{magenta!5}4.99 & ~ &2.80e-07  & \cellcolor{magenta!5}4.99 \\
 ~ & \cellcolor{magenta!5} ~  & 80 & $\vert$ & 8.99e-09&  \cellcolor{magenta!5}4.96 &  & 8.88e-09& \cellcolor{magenta!5}4.98 & ~ &9.00e-09  & \cellcolor{magenta!5}4.96 \\
 				\hline
 \end{tabular}
 		}
 \end{center}
\caption{\scriptsize{$L^2$ errors and corresponding convergence rates for $u$ in the problem \eqref{eq:prob1}, using $\mathcal{P}^q$ polynomials with $q=1,2,3,4$. The time step size is $\tau=2\times10^{-5}$ with the terminal time $T=1$, and the spatial interval is divided into $N$ uniform cells. The numerical fluxes parameters are set to $\theta = \gamma = \zeta = 0$. These results show that optimal convergence is robust across different orders of $\alpha(t)$. We also observe optimal convergence for $v$. These results are omitted here for brevity.}}\label{tab:convergence_1d_space_u_al}
 \end{table}

Table \ref{tab:convergence_1d_time_al} presents the temporal convergence results. Here, we fix the number of spatial elements at $N=200$ and choose $q_u=5$ so that the spatial discretization error is negligible compared with the temporal discretization error. We then vary the number of time steps as $M=100,200,300,400$. For all variable orders listed in \eqref{eq:choice_of_beta}, second-order convergence in the $L^2$ norm is observed, which is consistent with the theoretical prediction.

\begin{table}[h!]
 	\footnotesize
 \begin{center}
 		\scalebox{1.0}{
 		\begin{tabular}{c| c c c c c c c c c c}
 				\hline
 				~ & $\alpha(t)$ & $\vert$ & $0.1+0.8e^{-t}$  & ~ & ~ & $0.9-0.5t^2$ & ~ &  ~& $(2+\sin t)/4$ & ~ \\
 				\cline{2-2} \cline{4-5} \cline{6-8} \cline{9-11}
 				~ & $M$ & $\vert$ & $E_u(h,\tau)$ & $\text{order}_\tau$ & ~ &$E_u(h,\tau)$ & $\text{order}_\tau$ & ~ & $E_u(h,\tau)$ & $\text{order}_\tau$  \\
 				\cline{2-11}
~ & 100 & $\vert$ &5.02e-04&   --& ~ & 3.66e-04&   -- & ~ &5.00e-04 &   --  \\
$u$ & 200 & $\vert$ & 1.27e-04& 2.00& ~ & 9.24e-05& 1.98 & ~ &1.27e-04 & 1.97  \\
~ & 400 & $\vert$ &3.18e-05& 2.00& ~ & 2.33e-05& 1.99 & ~ &3.23e-05 & 1.98  \\
~ & 800 & $\vert$ &7.99e-06& 2.00& ~ & 5.86e-06& 1.99 & ~ &8.13e-06 & 1.98  \\
 				\hline
 \end{tabular}
 		}
 \end{center}
\caption{\scriptsize{$L^2$ errors and corresponding convergence rates for $u$ in problem \eqref{eq:prob1}, using $\mathcal{P}^5$ polynomials with a spatial mesh size of $h=2\pi/200$. The time interval is divided into $M$ uniform cells, with the end time set to $T=1$, and the numerical fluxes parameters are $\theta=\gamma=\zeta=0$. These results confirm that the second-order convergence is robust to the fractional order $\alpha(t)$. We also observe 2nd-order convergence for $v$. These results are omitted here for brevity.}}\label{tab:convergence_1d_time_al}
 \end{table}

Finally, Table \ref{tab:convergence_1d_time_al_simu} reports the results when the spatial and temporal mesh sizes are refined simultaneously. In this test, we take $q_v=1$ and choose $N$ and $M$ such that $h \approx \tau$. A consistent second-order convergence rate is observed. We note, however, that Theorem \ref{them2} and Remark \ref{remark:optimal} rigorously establish only the bound $\mathcal{O}(h^{-1}\tau^2)$ for the contribution of the temporal discretization error when $\gamma \zeta = 0$. The factor $h^{-1}$ arises from the trace inequality used to estimate the boundary integrals associated with the local truncation error terms. Removing this additional factor and establishing the observed second-order convergence rigorously is left for future work.
 \begin{table}[h!]
 	\footnotesize
 \begin{center}
 		\scalebox{1.0}{
 		\begin{tabular}{c| c c c c c c c c c c c}
 				\hline
 				~ & ~ & $\alpha(t)$ & $\vert$ & $0.1+0.8e^{-t}$  & ~ & ~ & $0.9-0.5t^2$ & ~ &  ~& $(2+\sin t)/4$ & ~ \\
 				\cline{2-3} \cline{5-6} \cline{7-9} \cline{10-12}
 				~ & $M$ & $N$ & $\vert$ & $E_v(h,\tau)$ & $\text{order}$ & ~ &$E_v(h,\tau)$ & $\text{order}$ & ~ & $E_v(h,\tau)$ & $\text{order}$  \\
 				\cline{2-12}
 & 100 & 100 & $\vert$ &4.59e-03&   --& ~ & 4.60e-03&   -- & ~ &4.60e-03 &   --  \\
$v$ & 200 & 200 & $\vert$ & 1.15e-03& 2.00& ~ & 1.15e-03& 2.00 & ~ &1.15e-03 & 2.00  \\
 & 400 & 400 &$\vert$ &2.87e-04& 2.00& ~ & 2.87e-04& 2.00 & ~ &2.87e-04 & 2.00  \\
          ~ & 800 & 800 & $\vert$ &7.18e-05& 2.00& ~ & 7.18e-05& 2.00 & ~ &7.18e-05 & 2.00  \\
 				\hline
 \end{tabular}
 		}
 \end{center}
\caption{\scriptsize{$L^2$ errors and corresponding convergence rates for $v$ in the problem \eqref{eq:prob1} using $\mathcal{P}^1$ polynomials. The spatial interval is divided into $N$ uniform cells, and the time interval is divided into $M$ uniform cells, with a final time of $T=1$, and the numerical fluxes parameters are $\theta=\gamma=\zeta=0$. We observe a consistent 2nd-order convergence across all choices of $\alpha(t)$.}}\label{tab:convergence_1d_time_al_simu}
 \end{table}

{\paragraph{Weak initial singularity.}  We next examine the performance of the proposed scheme \eqref{eq:DG_fully1}--\eqref{eq:DG_fully1_m} for a solution with limited regularity near the initial time. This example is not covered by the smooth-in-time regularity assumptions used in the error analysis, but is included to investigate the behavior of the method when an initial weak singularity is present. Let
\begin{equation}\label{weak_singular}
    u(x,t)=t^{3/2} \Phi(x),
\end{equation}
where \(\Phi(x)\) is the function defined in \eqref{eq:Phi}. The corresponding source term is obtained by substituting this exact solution into the governing equation. The initial data are $u(x,0)=0, u_t(x,0)=0$. Although \(u\) and \(u_t\) are continuous at \(t=0\), the second time derivative satisfies $u_{tt}(x,t)= \frac34 t^{-1/2}\Phi(x)$, which is singular as \(t\to0^+\). Therefore, this example provides a numerical test of the scheme under reduced temporal regularity.}

{Since the singularity occurs in time, we fix a sufficiently fine spatial mesh with \(h=2\pi/200\) and use \(\mathcal P^5\) elements, as in the spatial convergence test reported in Table \ref{tab:convergence_1d_space_u_al}. We then study the temporal convergence on uniform time grids with $M$ time steps. In particular, we report the maximum-in-time $L^2$ errors} 
\[{
    E_u^{\text{max}}:=\max_{1\le m\le M}\|u(x,t_m)-u_h^m(x)\|_{L^2(\Omega_h)},
    \qquad
    E_v^{\text{max}}:=\max_{1\le m\le M}\|u_t(x,t_m)-v_h^m(x)\|_{L^2(\Omega_h)}.}
\]
{The results are presented in Table \ref{tab:convergence_1d_time_weak_singularity} for variable order $\alpha(t) = \{0.1+0.8e^{-t}, (2+\sin t)/4 \}$ and fix flux parameters $\theta=\gamma=\zeta = 0$. The table shows a clear reduction in the observed temporal convergence rate, which is consistent with the weak initial singularity of the exact solution. We use uniform time steps in this test because the fully discrete scheme analyzed in this paper is formulated on uniform temporal meshes. The purpose of this example is not to recover the optimal rate by mesh grading, but rather to document the effect of weak initial-time singularity on the observed convergence behavior of the proposed method. A thorough study of graded temporal meshes for recovering higher-order convergence is left for future work.}

\begin{table}[h!]
 	\footnotesize
 \begin{center}
 		\scalebox{0.965}{
 		\begin{tabular}{c c c c c c c | c c c c c }
        \hline
        & $\vert$ & & $u$ & & & & & $v$ & & & \\
 				\hline
 				$\alpha(t)$ & $\vert$ & $0.1+0.8e^{-t}$  & ~ & ~ & $(2+\sin t)/4$ &  ~& $0.1+0.8e^{-t}$ & ~ & & $(2+\sin t)/4$ &   \\
 				\cline{1-1} \cline{3-4} \cline{5-7} \cline{8-12}
 				$M$ & $\vert$ & $E_u^{\text{max}}$ & $\text{order}_\tau$&~ &$E_u^{\text{max}}$ & $\text{order}_\tau$  & $E_v^{\text{max}}$ & $\text{order}_\tau$ & ~ & $E_v^{\text{max}}$ & $\text{order}_\tau$ \\
 				\cline{1-12}
 100 & $\vert$ &4.48e-02&   --& ~ & 3.33e-02&   -- &6.93e-04 &   --  & ~ &7.11e-02 &   -- \\
 200 & $\vert$ & 3.11e-02& 0.53& ~ & 2.34e-02& 0.51 &4.94e-05 & 0.49  & ~ &5.19e-02 & 0.46\\
400 & $\vert$ &2.15e-02& 0.53& ~ & 1.65e-02& 0.50 &3.52e-06 & 0.49 & ~ &3.75e-02 & 0.47 \\
800 & $\vert$ &1.49e-02& 0.53& ~ & 1.17e-02& 0.50  &2.50e-06 & 0.49 & ~ &2.70e-02 & 0.48\\
 				\hline
 \end{tabular}
 		}
 \end{center}
\caption{
\scriptsize{Maximum-in-time $L^2$ errors and corresponding convergence rates for $u$ and $v$ for the weakly singular solution \eqref{weak_singular}. The computations use \(\mathcal P^5\) polynomials with spatial mesh size \(h=2\pi/200\). The time interval \([0,1]\) is divided into \(M\) uniform time cells, and the numerical flux parameters are set to \(\theta=\gamma=\zeta=0\). The results show a half-order temporal convergence in the presence of the weak initial singularity.}}\label{tab:convergence_1d_time_weak_singularity}
 \end{table}

{To further examine where the most significant errors occur, we report the
time histories of
\[
    \|v(t_m,x) - v_h^m(x)\|_{L^2(\Omega_h)}
    \qquad \text{and} \qquad
    \|(v_h^m(x)-v_h^{m-1}(x))/\tau\|_{L^2(\Omega_h)},
    \qquad m=1,2,\cdots,M=800,
\]
in Figure \ref{fig:time_history_weak_singularity}. The first quantity measures the error in \(v=u_t\), while the second quantity provides a discrete approximation of the magnitude of \(v_t\). The results show that the largest errors are concentrated near the initial time, which is consistent with the singular behavior of the second-order time derivative \(u_{tt}\).}

\begin{figure}
\centering
\includegraphics[width=0.43\textwidth]{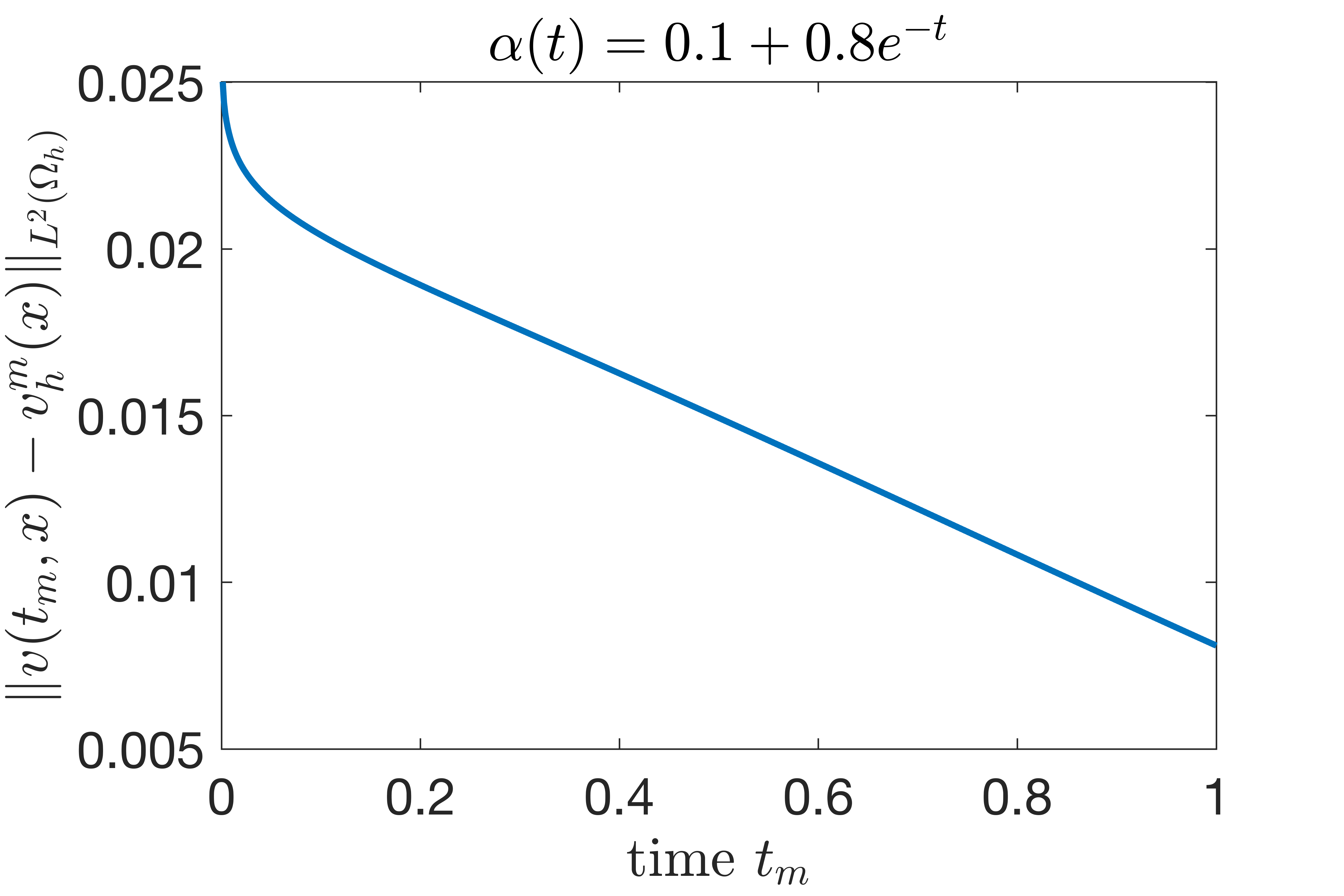} \quad  
\includegraphics[width=0.43\textwidth]{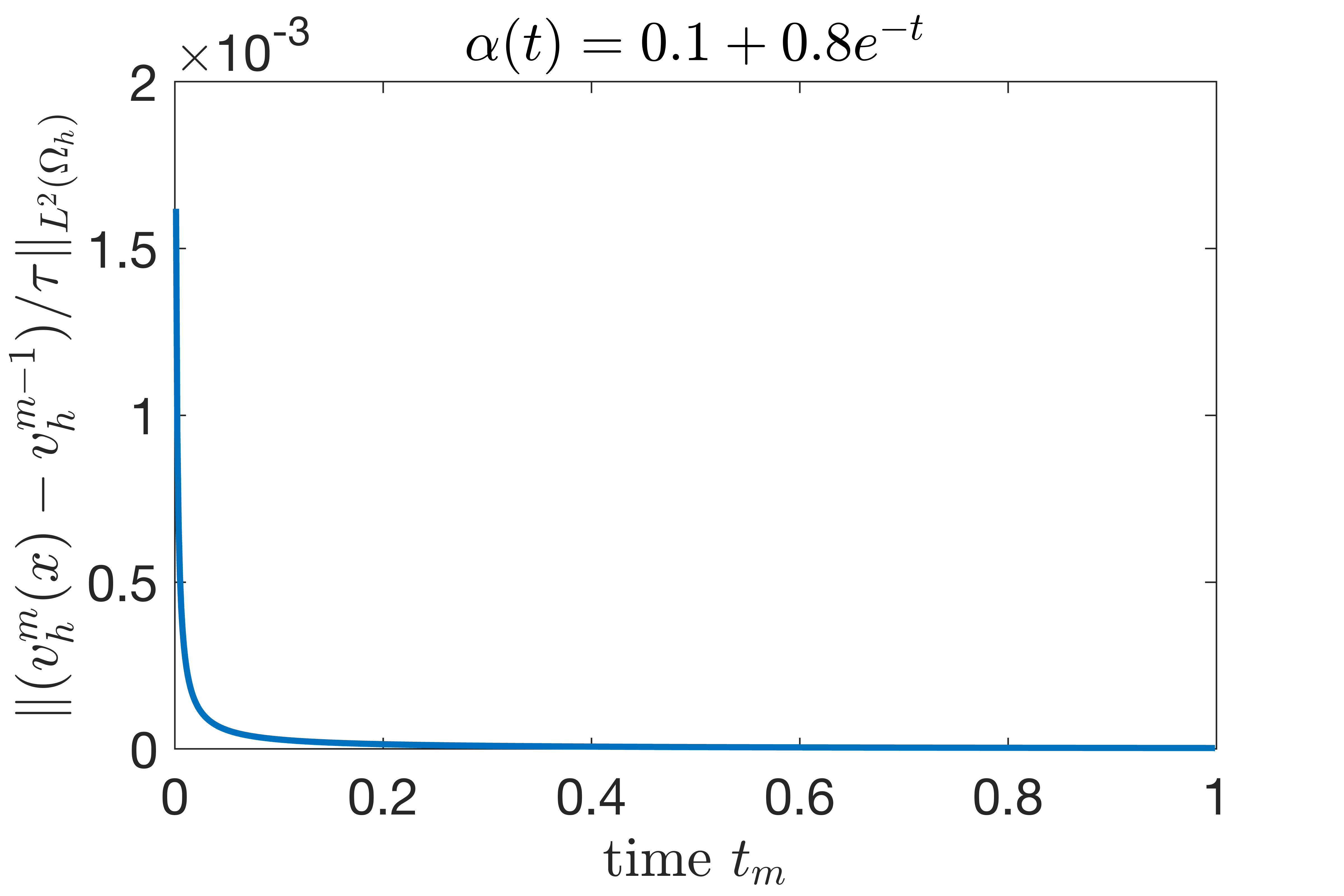} \\
\includegraphics[width=0.43\textwidth]{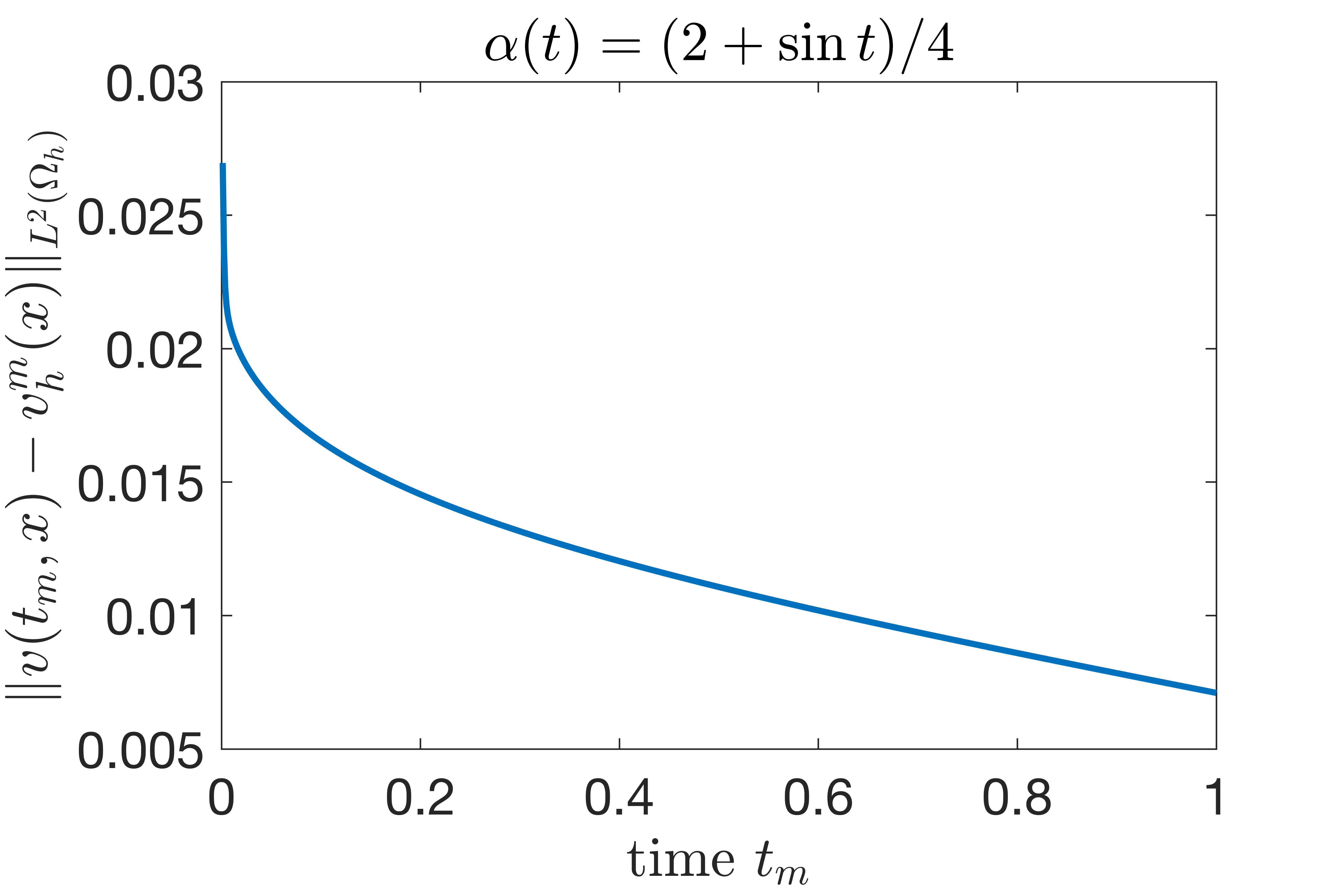} \quad 
\includegraphics[width=0.43\textwidth]{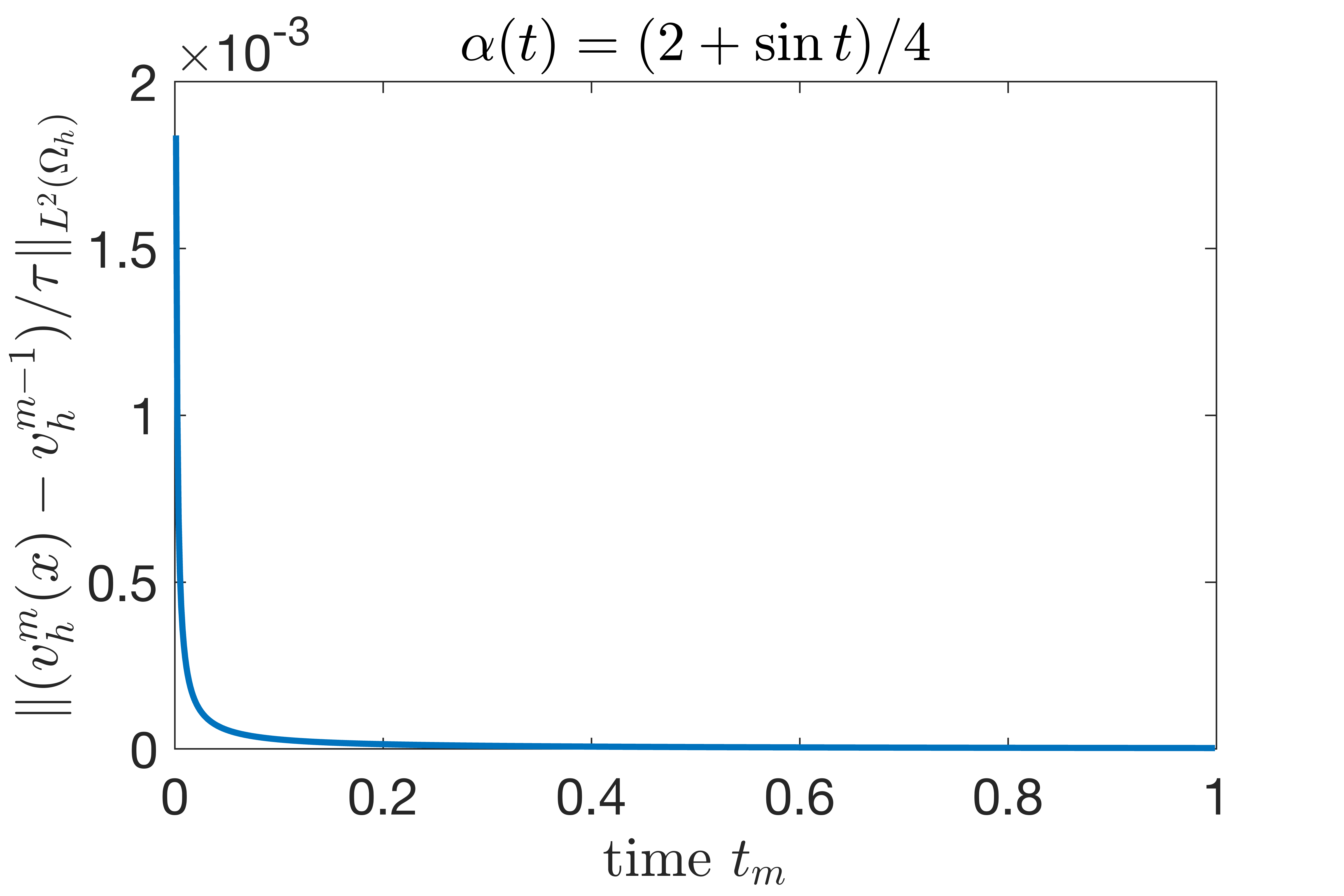}
\caption{\scriptsize{Time histories for the weakly singular solution \eqref{weak_singular} with \(M=800\) until \(T=1\); and \(\mathcal P^5\) polynomials with spatial mesh $N=200$. \textbf{Left:} \(L^2\)-error \(\|v(t_m)-v_h^m\|_{L^2(\Omega_h)}\). \textbf{Right:} \(L^2\)-norm of the backward difference \(\|(v_h^m-v_h^{m-1})/\tau\|_{L^2(\Omega_h)}\). The top and bottom rows correspond to \(\alpha(t)=0.1+0.8e^{-t}\) and \(\alpha(t)=(2+\sin t)/4\), respectively. The pronounced initial-time behavior is consistent with the weak singularity \(u_{tt}\sim t^{-1/2}\).}} \label{fig:time_history_weak_singularity}
\end{figure}

\subsection{Example in 2D}
In this section, we investigate the convergence of the proposed fully discrete energy-based DG scheme \eqref{eq:DG_fully1}--\eqref{eq:DG_fully1_m} for the variable-order time-fractional wave equation in two-dimensional space. Since the optimal convergence result in multiple dimensions is established for Cartesian grids, we report here numerical results obtained on Catersian grids. We consider the problem
 \begin{equation}\label{eq:prob2}
 \frac{\partial^2 u}{\partial t^2}+{}_0^C \mathcal{D}_t^{\alpha(t)+1} u =  \Delta u + f(x, y, t), 
\quad (x, y) \in (0, 1)\times(0, 1),\quad t\in (0, 1], 
 \end{equation}
 where $\alpha(t)$ satisfies the condition \eqref{eq:condition_alpha}. The exact solution is chosen in the form of
 \begin{equation}\label{eq:sol_form2}
 u(x,y,t) = G(t)\Phi_2(x,y),
 \end{equation}
 where \(G(t)\) is the same temporal profile as in the preceding one-dimensional test, and
{\[\Phi_2(x,y) = \Big(1 + \frac{1}{4}\cos 2\pi x + \frac{1}{5}\sin 2\pi y\Big)\sin 2\pi x \sin 2\pi y.\]}
 The source term \(f(x,y,t)\), together with the initial and boundary data, is determined by substituting \eqref{eq:sol_form2} into \eqref{eq:prob2}. Periodic boundary conditions are imposed. {In this example, we take  $\alpha(t) = 0.3+0.4|t-0.5|$. This variable order is Lipschitz continuous, but it is not $\mathcal{C}^1([0,1])$ because of the corner at $t = 0.5$.} We have also tested the other smooth variable-order functions used in the one-dimensional experiments and observed essentially the same convergence behavior; the corresponding tables are omitted to avoid repetition. 

The spatial discretization is performed on a uniform Cartesian grid with nodes \((x_k, y_j) = (kh, jh)\) for \(k, j = 0, 1, \cdots, N\) and \(h = 1/N\). The time interval is uniformly discretized with nodes \(t_m = m\tau\) for \(m = 0, \cdots, M\), where \(\tau = 1/M\). As in the one-dimensional experiments, we consider approximation spaces of degree \(q\in\{1,2,3,4\}\) for \(u_h\), $q-1$ for $v_h$, and the flux parameters $\theta = \gamma = \zeta =0$.   

Table \ref{tab:convergence_2d_space_u_al} presents the \(L^2\) errors for \(u\).  As in the one-dimensional spatial convergence test, we take \(\tau=10^{-4}\) so that the spatial discretization error dominates the temporal discretization error. The results show the optimal spatial convergence rate of order \(q+1\) for \(u\), which is consistent with the theoretical accuracy of the spatial DG discretization. To further validate the temporal accuracy of the proposed scheme, Table \ref{tab:convergence_2d_time_al} presents the \(L^2\) errors for both \(u\) and $v$ under different temporal mesh sizes. The spatial mesh size is fixed as \(h=1/20\) in each coordinate direction, and the polynomial degree is chosen as \(q=5\), so that the spatial discretization error is negligible compared with the temporal error. The number of time steps is varied as \(M=10,20,30,40\). Consistent with the one-dimensional results, the observed temporal convergence rates are $2$ for all cases.

\begin{table}[h!]
 	\footnotesize
 \begin{center}
 		\scalebox{1.0}{
 		\begin{tabular}{ c| c c c c }
 				\hline
     $q$ & $N\times N$ & $\vert$ & $E_u(h,\tau)$ & $\text{order}_h$  \\
 				\hline
 \cellcolor{gray!15} ~ & $10\times10$  & $\vert$ & 1.23e-01& \cellcolor{gray!15}--      \\
 \cellcolor{gray!15} ~ & $15\times15$  & $\vert$ & 5.56e-02& \cellcolor{gray!15}1.96  \\
 \cellcolor{gray!15}1  & $20\times20$  & $\vert$ & 3.15e-02& \cellcolor{gray!15}1.98\\
\cellcolor{gray!15} ~ & $25\times25$  & $\vert$ & 2.02e-02& \cellcolor{gray!15}1.99  \\
 			 ~ & ~  & ~ & ~ & ~ \\
 \cellcolor{cyan!5} ~  & $10\times10$   & $\vert$ & 9.77e-03& \cellcolor{cyan!5}--     \\
\cellcolor{cyan!5} ~  & $15\times15$  & $\vert$ & 2.95e-03& \cellcolor{cyan!5}2.95  \\
 \cellcolor{cyan!5}2   & $20\times20$  & $\vert$ & 1.25e-03& \cellcolor{cyan!5}2.98  \\
 \cellcolor{cyan!5} ~  & $25\times25$  & $\vert$ & 6.44e-04& \cellcolor{cyan!5}2.99 \\
 				\hline
 \end{tabular}
 		} \qquad\quad \scalebox{1.0}{
 		\begin{tabular}{ c| c c c c }
 				\hline
     $q$ & $N\times N$ & $\vert$ & $E_u(h,\tau)$ & $\text{order}_h$  \\
 				\hline
 \cellcolor{blue!5} ~ & $10\times10$  & $\vert$ & 3.51e-04& \cellcolor{gray!15}--      \\
 \cellcolor{blue!5} ~ & $15\times15$  & $\vert$ & 7.10e-05& \cellcolor{gray!15}3.94  \\
 \cellcolor{blue!5}3  & $20\times20$  & $\vert$ & 2.27e-05& \cellcolor{gray!15}3.97\\
\cellcolor{blue!5} ~ & $25\times25$  & $\vert$ & 9.35e-06& \cellcolor{gray!15}3.97  \\
 			 ~ & ~  & ~ & ~ & ~ \\
 \cellcolor{magenta!5} ~  & $10\times10$   & $\vert$ & 1.72e-05& \cellcolor{cyan!5}--     \\
\cellcolor{magenta!5} ~  & $15\times15$  & $\vert$ & 2.29e-05& \cellcolor{cyan!5}4.97  \\
 \cellcolor{magenta!5}4   & $20\times20$  & $\vert$ & 5.47e-06& \cellcolor{cyan!5} 4.98 \\
 \cellcolor{magenta!5} ~  & $25\times25$  & $\vert$ & 1.79e-06& \cellcolor{cyan!5} 5.00\\
 				\hline
 \end{tabular}
 		}
 \end{center}
\caption{\scriptsize{$L^2$ errors and the corresponding convergence rates for the problem \eqref{eq:prob2} of $u$ were computed using $\mathcal{P}^q$ polynomials with $q=1, 2, 3, 4$ and a time step size of $\tau = 10^{-4}$. The spatial domain was discretized into $N$ uniform cells with a terminal time of $T=1$. The numerical fluxes parameters are set to be $\theta = \gamma = \zeta = 0$, and the variable order $\alpha(t) = 0.3+0.4|t-1/2|$.}}\label{tab:convergence_2d_space_u_al}
 \end{table}

\begin{table}[h!]
 	\footnotesize
 \begin{center}
 		\scalebox{1.0}{
 		\begin{tabular}{ c c c c }
 				\hline
                 &  & $u$ &  \\
                \hline
 				$M$ & $\vert$ & $E_u(h,\tau)$ & $\text{order}_\tau$  \\
 				\cline{1-4}
 10 & $\vert$ &6.98e-03&   -- \\
 20 & $\vert$ & 1.78e-03& 1.97  \\
 30 & $\vert$ &7.89e-04& 2.01  \\
40 & $\vert$ &4.43e-04& 2.01 \\
 				\hline
 \end{tabular}
 		}\qquad \quad \scalebox{1.0}{
 		\begin{tabular}{ c c c c }
        \hline
                 &  &  $v$ &  \\
 				\hline
 				$M$ & $\vert$ & $E_v(h,\tau)$ & $\text{order}_\tau$  \\
 				\cline{1-4}
 10 & $\vert$ &4.67e-02&   -- \\
 20 & $\vert$ & 1.13e-02& 2.05  \\
 30 & $\vert$ &5.02e-03& 2.00  \\
40 & $\vert$ &2.83e-03& 1.99 \\
 				\hline
 \end{tabular}
 		}
 \end{center}
\caption{\scriptsize{$L^2$ errors and corresponding convergence rates for $(u,v)$ in the problem \eqref{eq:prob2} at time $T = 1$ were computed using $\mathcal{P}^5$ polynomials on a uniform Cartesian mesh of $20 \times 20$ elements with $\theta = \gamma = \zeta = 0$. The time interval was divided into $M$ uniform cells. The variable order is taken to be $\alpha(t) = 0.3+0.4|t-1/2|$.}}\label{tab:convergence_2d_time_al}
 \end{table}

\section{Brief Discussion}\label{sec:discussion}

In conclusion, we have developed and analyzed a fully discrete energy-based DG method for the Caputo-type variable-order time-fractional wave equation \eqref{eq:problem}.  Such wave equations can model complex wave phenomena by capturing memory effects and anomalous temporal dynamics more accurately in evolving systems, and their applications have been seen in seismology, biomechanics, materials science, electromagnetics, anomalous transport, and acoustics.  

To address the numerical challenges caused by variable-order time-fractional derivatives, we propose a fully discrete scheme that achieves second-order accuracy in time for sufficiently smooth solutions. For the spatial discretization, we introduce the first-order time derivative as an auxiliary variable and reformulate the original equation as a reduced-order system with only one additional unknown, independently of the spatial dimension. Compared with a local DG formulation in multidimensional space, this approach reduces both the memory requirements and the number of equations solved at each time step.

{A main analytical contribution of this work is the stability and convergence analysis for general variable fractional orders. In particular, by establishing a cumulative weight-variation estimate for the variable-order memory weights, we remove the monotonicity restriction on the fractional order that is often used to compare memory weights at consecutive time levels. Under the assumption that the order function $\alpha:[0,T] \rightarrow (0,1)$ is Lipschitz continuous, we establish energy stability of the fully discrete scheme and prove second-order temporal convergence.} We also derive spatial error estimates in an energy norm on structured grids. For Cartesian elements, with suitable numerical fluxes and appropriately paired approximation spaces for the displacement variable \(u\) and the velocity variable \(v\), the proposed scheme achieves optimal spatial convergence in the energy norm.

The numerical experiments support the theoretical findings. {In addition to standard convergence tests, we also investigate a weakly singular solution near the initial time. The results show the expected order reduction under reduced temporal regularity, illustrating the importance of initial-time singularities for Caputo-type fractional wave equations. A systematic analysis and implementation of nonuniform or graded temporal meshes for recovering higher-order convergence in the presence of such singularities will be pursued in future work.}

\section*{Appendix}
{We provide the detailed proof for Lemma \ref{lem:cumulative_weight_variation}. To this end, we first prove a Lipschitz estimate for the intermediate points. By Lemma \ref{lemma:nonlinear}, if $L_\alpha \tau < 2$, there exists a unique $\sigma_m$ such that
\[\sigma_m=1-\frac12\alpha(t_m+\sigma_m\tau),\]
where $L_\alpha$ is the Lipschitz constant of $\alpha(t)$. Hence
\[|\sigma_m-\sigma_{m-1}|
\le
\frac12 L_\alpha
\left|t_m+\sigma_m\tau-(t_{m-1}+\sigma_{m-1}\tau)\right|\le
\frac12L_\alpha
\left(\tau+\tau|\sigma_m-\sigma_{m-1}|\right).\]
Therefore, for sufficiently small $\tau$ (e.g., $L_\alpha \tau \leq 1$), we obtain
\begin{equation}\label{eq: aux5}
|\sigma_m-\sigma_{m-1}|
    \le C\tau \qquad \text{and}\qquad |\alpha_{m+\sigma_m}-\alpha_{m-1+\sigma_{m-1}}|
    = 2|\sigma_m -\sigma_{m-1}| 
    \le C\tau,
    \end{equation}
where $C$ is a positive constant independent of the time step size $\tau$ and the time level $m$. We next view the coefficients $a_i^{(m)}$ as functions of the two parameters $\alpha_{m+\sigma_m}$ and $\sigma_m$. For ease of notation, we write these parameters simply as $\alpha$ and $\sigma$.}

{
For $i=0$, define
\[
    a_0(\alpha,\sigma;\tau)
    :=
    \frac{\tau^{-\alpha}}{\Gamma(2-\alpha)}
    \left\{
    \frac{(1+\sigma)^{2-\alpha}-\sigma^{2-\alpha}}{2-\alpha}
    -\frac12\Big((1+\sigma)^{1-\alpha}-\sigma^{1-\alpha}\Big)
    \right\}.
\]
Since $\sigma\in(1/2,1)$ and $\alpha : [0,T]\rightarrow  (0,1)$ is Lipschitz continuous, the expression inside the braces and its first derivatives with respect to $\alpha$ and $\sigma$ are uniformly bounded. Moreover, differentiating $\tau^{-\alpha}$ with respect to $\alpha$ produces the factor $\log\tau$. Therefore, for $0<\tau\le 1$, there exists a positive constant $C$ independent of $\tau$ such that
\begin{equation}\label{eq: aux3}
    |\partial_\alpha a_0(\alpha,\sigma;\tau)|
    +|\partial_\sigma a_0(\alpha,\sigma;\tau)|
    \le
    C(1+|\log\tau|)\tau^{-\alpha_{\max}},
\end{equation}
where $\alpha_{\max} := \max_{t\in[0,T]} \alpha(t) \in (0,1)$.}

{For $1\leq i\leq m-1$, denote $\Delta^2 f(x):=f(x+1)-2f(x)+f(x-1)$. Then by \eqref{eq:cdef}, we have
\[
    c_i(\alpha,\sigma)
    =
    \frac{\Delta^2 \big((i+\sigma)^{2-\alpha}\big)}{2-\alpha}
    -
    \frac12\Delta^2 \big((i+\sigma)^{1-\alpha}\big).
\]
Using the identity $\Delta^2 f(x)
    =
    \int_{-1}^{1}(1-|y|)f''(x+y)\,dy$ yields
\begin{equation}\label{eq:auxx1}
    a_i(\alpha,\sigma;\tau)
    =
    \frac{1-\alpha}{\Gamma(2-\alpha)}
    \int_{-1}^{1}
    (1-|y|)
    \bigl((i+\sigma+y)\tau\bigr)^{-\alpha}
    +
    \frac{\alpha \tau}{2}(1-|y|)
    \bigl((i+\sigma+y)\tau\bigr)^{-1-\alpha}\,dy.
\end{equation}
Since $1\leq i\leq m-1$ and $\sigma\in(1/2,1)$, we have
\begin{equation*}
    \frac14(i+1)\le i+\sigma+y\le \frac32(i+1),
    \qquad -1\le y\le 1.
\end{equation*}
Thus, there exists a positive constant $C > 0$ independent of $\tau$ and $m$ such that
\begin{equation}\label{eq: aux4}
\left|\log\bigl((i+\sigma+y)\tau\bigr)\right|
    \le C\left(1+\left|\log((i+1)\tau)\right|\right).
\end{equation}
Now, differentiating \eqref{eq:auxx1} with respect to $\alpha$ and $\sigma$, and using \eqref{eq: aux4}, yields
\begin{equation}\label{eq:auxx2}
    |\partial_\alpha a_i(\alpha,\sigma;\tau)|
    +
    |\partial_\sigma a_i(\alpha,\sigma;\tau)|
    \le
    C
    \left(1+\left|\log((i+1)\tau)\right|\right)
    ((i+1)\tau)^{-\alpha_{\max}}.
\end{equation}
Combining \eqref{eq:auxx2} with \eqref{eq: aux5}--\eqref{eq: aux3}, and using the mean value theorem, yields \eqref{eq: aux1}. It remains to prove the cumulative estimate \eqref{eq: aux2}. Since
\[
    \big(a_{m-k}^{(m)}-a_{m-k}^{(m-1)}\big)_+ = \max\{a_{m-k}^{(m)}-a_{m-k}^{(m-1)}, 0\} 
    \le \big|a_{m-k}^{(m)}-a_{m-k}^{(m-1)}\big|,
\]
we obtain from \eqref{eq: aux1} that for any $2\leq k\leq n\leq M-1$
\[\begin{aligned}
    \tau\sum_{m=k}^{n}
    \left(a_{m-k}^{(m)}-a_{m-k}^{(m-1)}\right)_+ & \le
C\tau^2
\sum_{m=k}^{n}
\left(1+\left|\log((m-k+1)\tau)\right|\right)
\big((m-k+1)\tau\big)^{-\alpha_{\max}} \\
   & =
    C\tau^2
\sum_{i=0}^{n-k}
\left(1+\left|\log((i+1)\tau)\right|\right)
\big((i+1)\tau\big)^{-\alpha_{\max}}.
    \end{aligned}
\]
Let \(g(r):=(1+|\log r|)r^{-\alpha_{\max}}\). Since
\(\alpha_{\max}<1\), \(g\in L^1(0,T)\). Moreover, \(g\) is decreasing
near \(r=0\) and bounded away from \(r=0\). Therefore, provided
\((n-k+1)\tau\le T\), there exists a positive constant \(C>0\), independent of \(\tau,n,k\), such that
\[
    \tau
    \sum_{i=0}^{n-k} \left(1+\left|\log((i+1)\tau)\right|\right)
\big((i+1)\tau\big)^{-\alpha_{\max}}
    \le C.
\]
Therefore, we obtain the cumulative estimate \eqref{eq: aux2}.
}

\section*{Acknowledgement}
This work was partially supported by the NSF grants \#DMS-2231482 and \#DMS-2513924, Simons Foundation through travel support for mathematicians, and Ken Kennedy Institute at Rice University.

\bibliography{lu,BIB-Local}

@article{zhang2021local,
  title={A local energy-based discontinuous Galerkin method for fourth-order semilinear wave equations},
  author={Zhang, Lu},
  journal={IMA J. Numer. Anal.},
  volume={44},
  number={5},
  pages={2793-2820},
  year={2024},
  publisher={Oxford University Press}
}

@article{chou2014optimal,
	title={Optimal energy conserving local discontinuous {G}alerkin methods for second-order wave equation in heterogeneous media},
	author={Chou, Ching-Shan and Shu, Chi-Wang and Xing, Yulong},
	journal={J. Comput. Phys.},
	volume={272},
	pages={88-107},
	year={2014},
	publisher={Elsevier}
}

@article{riviere2003discontinuous,
	title={Discontinuous finite element methods for acoustic and elastic wave problems},
	author={Riviere, Beatrice and Wheeler, Mary F},
	journal={Contemp. Math.},
	volume={329},
	number={271-282},
	pages={4-6},
	year={2003},
	publisher={Providence, RI: American Mathematical Society}
}

@article{zhang2019energy,
	title={An energy-based discontinuous {G}alerkin method for the wave equation with advection},
	author={Zhang, Lu and Hagstrom, Thomas and Appel{\"o}, Daniel},
	journal={SIAM J. Numer. Anal.},
	volume={57},
	number={5},
	pages={2469-2492},
	year={2019},
	publisher={SIAM}
}

@article{appelo2018energy,
	title={An energy-based discontinuous {G}alerkin discretization of the elastic wave equation in second order form},
	author={Appel{\"o}, Daniel and Hagstrom, Thomas},
	journal={Comput. Methods Appl. Mech. Engrg.},
	volume={338},
	pages={362-391},
	year={2018},
	publisher={Elsevier}
}

@article{appelo2015new,
	title={A new discontinuous {G}alerkin formulation for wave equations in second-order form},
	author={Appel{\"o}, Daniel and Hagstrom, Thomas},
	journal={SIAM J. Numer. Anal.},
	volume={53},
	number={6},
	pages={2705-2726},
	year={2015},
	publisher={SIAM}
}

@article{appelo2020energy,
	title={An energy-based discontinuous {G}alerkin method for semilinear wave equations},
	author={Appel{\"o}, Daniel and Hagstrom, Thomas and Wang, Qi and Zhang, Lu},
	journal={J. Comput. Phys.},
	volume={418},
	pages={109608},
	year={2020},
	publisher={Elsevier}
}

@article{du2019convergence,
	title={Convergence analysis of a discontinuous {G}alerkin method for wave equations in second-order form},
	author={Du, Yu and Zhang, Lu and Zhang, Zhimin},
	journal={SIAM J. Numer. Anal.},
	volume={57},
	number={1},
	pages={238-265},
	year={2019},
	publisher={SIAM}
}

@article{zhang2021energy,
  title={Energy-based discontinuous {G}alerkin difference methods for second-order wave equations},
  author={Zhang, Lu and Appel{\"o}, Daniel and Hagstrom, Thomas},
  journal={Commun. Appl. Math. Comput.},
  pages={1-25},
  year={2021},
  publisher={Springer}
}

@book{ciarlet2002finite,
	title={The Finite Element Method for Elliptic Problems},
	author={Ciarlet, Philippe G},
	year={2002},
	publisher={SIAM}
}

@article{appelo2021stagger, 
  title={An energy-based discontinuous {Galerkin} method with tame {CFL} numbers for the wave equation},
  author={Appel{\"o}, Daniel and Zhang, Lu and Hagstrom, Thomas and Li, Fengyan},
  journal={BIT Numer. Math},
  volume={63},
  number={1},
  pages={5},
  year={2023},
}

@inproceedings{hagstrom2021discontinuous,
  title={Discontinuous {G}alerkin Methods for Electromagnetic Waves in Dispersive Media},
  author={Hagstrom, Thomas and Appel{\"o}, Daniel and Zhang, Lu},
  booktitle={2021 International Applied Computational Electromagnetics Society Symposium (ACES)},
  pages={1-4},
  year={2021},
  organization={IEEE}
}

@article{heydari2019wavelet,
  title={A wavelet approach for solving multi-term variable-order time fractional diffusion-wave equation},
  author={Heydari, Mohammad Hossein and Avazzadeh, Zakieh and Haromi, Malih Farzi},
  journal={Appl. Math. Comput.},
  volume={341},
  pages={215-228},
  year={2019},
  publisher={Elsevier}
}

@article{zeng2017generalized,
  title={A generalized spectral collocation method with tunable accuracy for fractional differential equations with end-point singularities},
  author={Zeng, Fanhai and Mao, Zhiping and Karniadakis, George Em},
  journal={SIAM J. Sci. Comput.},
  volume={39},
  number={1},
  pages={A360-A383},
  year={2017},
  publisher={SIAM}
}

@article{shekari2019meshfree,
  title={A meshfree approach for solving 2{D} variable-order fractional nonlinear diffusion-wave equation},
  author={Shekari, Younes and Tayebi, Ali and Heydari, Mohammad Hossein},
  journal={Comput. Methods Appl. Mech. Eng},
  volume={350},
  pages={154-168},
  year={2019},
  publisher={Elsevier}
}

@article{tayebi2017meshless,
  title={A meshless method for solving two-dimensional variable-order time fractional advection-diffusion equation},
  author={Tayebi, Ali and Shekari, Younes and Heydari, Mohammad Hossein},
  journal={J. Comput. Phys.},
  volume={340},
  pages={655-669},
  year={2017},
  publisher={Elsevier}
}

@article{haq2019numerical,
  title={Numerical solutions of variable order time fractional $(1+ 1)$-and $(1+ 2)$-dimensional advection dispersion and diffusion models},
  author={Haq, Sirajul and Ghafoor, Abdul and Hussain, Manzoor},
  journal={Appl. Math. Comput.},
  volume={360},
  pages={107-121},
  year={2019},
  publisher={Elsevier}
}

@article{fu2019robust,
  title={A robust kernel-based solver for variable-order time fractional {PDE}s under 2{D}/3{D} irregular domains},
  author={Fu, Zhuo-Jia and Reutskiy, Sergiy and Sun, Hong-Guang and Ma, Ji and Khan, Mushtaq Ahmad},
  journal={Appl. Math. Lett.},
  volume={94},
  pages={105-111},
  year={2019},
  publisher={Elsevier}
}

@article{heydari2019computational,
  title={A computational method for solving variable-order fractional nonlinear diffusion-wave equation},
  author={Heydari, Mohammad Hossein and Avazzadeh, Zakieh and Yang, Yin},
  journal={Appl. Math. Comput.},
  volume={352},
  pages={235-248},
  year={2019},
  publisher={Elsevier}
}

@article{garrappa2023computational,
  title={A computational approach to exponential-type variable-order fractional differential equations},
  author={Garrappa, Roberto and Giusti, Andrea},
  journal={J. Sci. Comput.},
  volume={96},
  number={3},
  pages={63},
  year={2023},
  publisher={Springer}
}

@article{du2020temporal,
  title={Temporal second order difference schemes for the multi-dimensional variable-order time fractional sub-diffusion equations},
  author={Du, Ruilian and Alikhanov, Anatoly A and Sun, Zhizhong},
  journal={Comput. Math. Appl.},
  volume={79},
  number={10},
  pages={2952-2972},
  year={2020},
  publisher={Elsevier}
}

@article{alikhanov2015new,
  title={A new difference scheme for the time fractional diffusion equation},
  author={Alikhanov, Anatoly A},
  journal={J. Comput. Phys.},
  volume={280},
  pages={424-438},
  year={2015},
  publisher={Elsevier}
}

@article{du2022temporal,
  title={Temporal second-order finite difference schemes for variable-order time-fractional wave equations},
  author={Du, Ruilian and Sun, Zhizhong and Wang, Hong},
  journal={SIAM J. Numer. Anal.},
  volume={60},
  number={1},
  pages={104-132},
  year={2022},
  publisher={SIAM}
}

@article{sun2016some,
  title={Some temporal second order difference schemes for fractional wave equations},
  author={Sun, Hong and Sun, Zhizhong and Gao, Guanghua},
  journal={Numer. Methods Partial Differ. Equ.},
  volume={32},
  number={3},
  pages={970-1001},
  year={2016},
  publisher={Wiley Online Library}
}

@article{bagley1983theoretical,
  title={A theoretical basis for the application of fractional calculus to viscoelasticity},
  author={Bagley, Ronald L and Torvik, Peter J},
  journal={J. Rheol.},
  volume={27},
  number={3},
  pages={201-210},
  year={1983},
  publisher={The Society of Rheology}
}

@article{metzler2000random,
  title={The random walk's guide to anomalous diffusion: a fractional dynamics approach},
  author={Metzler, Ralf and Klafter, Joseph},
  journal={Phys. Rep.},
  volume={339},
  number={1},
  pages={1-77},
  year={2000},
  publisher={Elsevier}
}

@book{pipkin2012lectures,
  title={Lectures on Viscoelasticity Theory},
  author={Pipkin, Allen C},
  volume={7},
  year={2012},
  publisher={Springer Science \& Business Media}
}

@article{zaslavsky2002chaos,
  title={Chaos, fractional kinetics, and anomalous transport},
  author={Zaslavsky, George M},
  journal={Phys. Rep.},
  volume={371},
  number={6},
  pages={461-580},
  year={2002},
  publisher={Elsevier}
}

@article{zhuang2009numerical,
  title={Numerical methods for the variable-order fractional advection-diffusion equation with a nonlinear source term},
  author={Zhuang, Pinghui and Liu, Fawang and Anh, Vo and Turner, Ian},
  journal={SIAM J. Numer. Anal.},
  volume={47},
  number={3},
  pages={1760-1781},
  year={2009},
  publisher={SIAM}
}

@article{coimbra2003mechanics,
  title={Mechanics with variable-order differential operators},
  author={Coimbra, Carlos FM},
  journal={Ann. Phys. (Berlin)},
  volume={515},
  number={11-12},
  pages={692-703},
  year={2003},
  publisher={Wiley Online Library}
}

@book{podlubny1998fractional,
  title={Fractional Differential Equations: An Introduction to Fractional Derivatives, Fractional Differential Equations, to Methods of Their Solution and Some of Their Applications},
  author={Podlubny, Igor},
  year={1998},
  publisher={Elsevier}
}

@article{nigmatullin1986realization,
  title={The realization of the generalized transfer equation in a medium with fractal geometry},
  author={Nigmatullin, RR},
  journal={Phys. Status Solidi (b)},
  volume={133},
  number={1},
  pages={425-430},
  year={1986},
  publisher={Wiley Online Library}
}

@article{lin2009stability,
  title={Stability and convergence of a new explicit finite-difference approximation for the variable-order nonlinear fractional diffusion equation},
  author={Lin, Ran and Liu, Fawang and Anh, Vo and Turner, Ian},
  journal={Appl. Math. Comput.},
  volume={212},
  number={2},
  pages={435-445},
  year={2009},
  publisher={Elsevier}
}

@article{zheng2021optimal,
  title={Optimal-order error estimates of finite element approximations to variable-order time-fractional diffusion equations without regularity assumptions of the true solutions},
  author={Zheng, Xiangcheng and Wang, Hong},
  journal={IMA J. Numer. Anal.},
  volume={41},
  number={2},
  pages={1522-1545},
  year={2021},
  publisher={Oxford University Press}
}

@article{lei2023finite,
  title={Finite element discretizations for variable-order fractional diffusion problems},
  author={Lei, Wenyu and Turkiyyah, George and Knio, Omar},
  journal={J. Sci. Comput.},
  volume={97},
  number={1},
  pages={5},
  year={2023},
  publisher={Springer}
}

@article{zeng2015generalized,
  title={A generalized spectral collocation method with tunable accuracy for variable-order fractional differential equations},
  author={Zeng, Fanhai and Zhang, Zhongqiang and Karniadakis, George Em},
  journal={SIAM J. Sci. Comput.},
  volume={37},
  number={6},
  pages={A2710-A2732},
  year={2015},
  publisher={SIAM}
}

@article{zhao2019multi,
  title={Multi-domain spectral collocation method for variable-order nonlinear fractional differential equations},
  author={Zhao, Tinggang and Mao, Zhiping and Karniadakis, George Em},
  journal={Comput. Methods Appl. Mech. Eng.},
  volume={348},
  pages={377-395},
  year={2019},
  publisher={Elsevier}
}

@article{xu2023novel,
  title={A novel meshless method based on {R}{B}{F} for solving variable-order time fractional advection-diffusion-reaction equation in linear or nonlinear systems},
  author={Xu, Yi and Sun, HongGuang and Zhang, Yuhui and Sun, Hai-Wei and Lin, Ji},
  journal={Comput. Math. Appl.},
  volume={142},
  pages={107-120},
  year={2023},
  publisher={Elsevier}
}

@article{bhrawy2016space,
  title={A space-time spectral collocation algorithm for the variable order fractional wave equation},
  author={Bhrawy, AH and Doha, EH and Alzaidy, JF and Abdelkawy, MA},
  journal={Springer Plus},
  volume={5},
  pages={1-15},
  year={2016},
  publisher={Springer}
}

@article{zheng2022analysis,
  title={Analysis and discretization of a variable-order fractional wave equation},
  author={Zheng, Xiangcheng and Wang, Hong},
  journal={Commun. Nonlinear Sci. Numer. Simul.},
  volume={104},
  pages={106047},
  year={2022},
  publisher={Elsevier}
}

@article{ren2023energy,
  title={An energy-based discontinuous {G}alerkin method for the nonlinear {S}chr{\"o}dinger equation with wave operator},
  author={Ren, Kui and Zhang, Lu and Zhou, Yin},
  journal={SIAM J. Numer. Anal.},
  volume={62},
  number={6},
  pages={2459-2483},
  year={2024},
  publisher={SIAM}
}

@book{ern2021finiteI,
  author    = {Ern, Alexandre and Guermond, Jean-Luc},
  title     = {Finite Elements I: Approximation and Interpolation},
  series    = {Texts in Applied Mathematics},
  volume    = {72},
  publisher = {Springer},
  address   = {Cham},
  year      = {2021}
}

\bibliographystyle{abbrv}

\end{document}